\documentclass[11pt]{amsart}
\usepackage{amsmath,amsthm,amssymb,enumerate,mathscinet,mathtools}
\usepackage{fullpage}
\usepackage{graphicx,xcolor,pgfplots}
\usepackage{verbatim}
\usepackage{mathrsfs}
\usepackage{hyperref}
\usepackage{enumitem}
\usepackage{blindtext}
\usepackage{multicol}
\setlength{\columnsep}{1cm}
\newcommand{\fm}{f_{\max}}

\newtheorem{theorem}{Theorem}[section]
\newtheorem{fact}[theorem]{Fact}
\newtheorem{lemma}[theorem]{Lemma}
\newtheorem{question}[theorem]{Question}

\newtheorem{conjecture}[theorem]{Conjecture}
\newtheorem{claim}[theorem]{Claim}
\newtheorem{definition}[theorem]{Definition}

\newtheorem{thm}[theorem]{Theorem}
\newtheorem{prop}[theorem]{Proposition}

\newtheorem{remark}[theorem]{Remark}


\newtheorem{remarks}[theorem]{Remarks}

\numberwithin{equation}{section}

\def\eps{\varepsilon}
\def\N{\mathbb{N}}
\def\Z{\mathbb{Z}}
\def\R{\mathbb{R}}
\def\fms{f^\star_{\max}}
\newcommand{\mis}{\textsc{mis}}
\makeatletter
\newcommand*{\rom}[1]{\expandafter\@slowromancap\romannumeral #1@}
\makeatother

\def\COMMENT#1{}
\let\COMMENT=\footnote

\allowdisplaybreaks

\title{Notes on sum-free sets in abelian groups}

\author{Nathana\"el Hassler and
 Andrew Treglown}

\thanks{NH: Université Bourgogne Europe, Dijon, France, {\tt nathanael.hassler@ens-rennes.fr}.
\\ \indent AT: University of Birmingham, United Kingdom, {\tt a.c.treglown@bham.ac.uk}. Research supported by EPSRC\\
\indent grants EP/V002279/1 and EP/V048287/1.}

\begin{document}
\begin{abstract}
In this paper we highlight a few open problems concerning maximal sum-free sets in abelian groups. 
In addition, for most even order abelian groups $G$ we asymptotically determine the number of maximal distinct sum-free subsets in  $G$. Our proof makes use  of the  container method.
\end{abstract}

\date{\today}

\maketitle

\section{Introduction}
Let $(G,+)$ be an abelian group. 
A triple $x,y,z \in G$ is a \emph{Schur triple} if $x+y=z$; if additionally $x,y,z$ are distinct, we call them a \emph{distinct Schur triple}. A subset $S \subseteq G$ is a \emph{sum-free set} if $S$ does not contain any Schur triple. Similarly, we say $S$ is a \emph{distinct sum-free set} if $S$ does not contain any distinct Schur triple. A sum-free set $S\subseteq G$ is \emph{maximal} if $S$ is not properly contained in another sum-free subset of $G$; we define the notion of a \emph{maximal distinct sum-free subset} analogously. We let $\mu(G)$ denote the size of the largest sum-free subset of $G$ and $\mu^\star(G) $ denote the size of the largest distinct sum-free subset of $G$.

The study of $\mu(G) $ dates back to the 1960s~\cite{yan} and we now (through work of Green and Ruzsa~\cite{GR-g}) have a complete understanding of the exact value of $\mu(G)$ for all abelian groups. To articulate this behaviour we need the following definition.
\begin{definition}
Let $G$ be an abelian group of order $n$.
\begin{itemize}
    \item Let $n$ be divisible by a prime $p\equiv 2 \pmod{3}$. Given the smallest such $p$, we  say that $G$ is \emph{type \rom{1}$(p)$}.
    \item If $n$ is not divisible by any prime $p\equiv 2\pmod{3}$, but $3|n$, then we say that $G$ is \emph{type \rom{2}}.
    \item Otherwise, $G$ is \emph{type \rom{3}}.
\end{itemize}
\end{definition}
\begin{theorem}\cite{yan, GR-g}\label{thmcharacter}
Given any finite abelian group $G$, if $G$ is type \rom{1}$(p)$ then $\mu(G)=|G|\left(\frac{1}{3}+\frac{1}{3p}\right)$. Otherwise, if $G$ is type \rom{2} then $\mu(G)=\frac{|G|}{3}$. Finally, if $G$ is type \rom{3} then $\mu(G)=|G|\left(\frac{1}{3}-\frac{1}{3m}\right)$ where $m$ is the exponent (largest order of any element) of $G$.
\end{theorem}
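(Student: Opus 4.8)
The plan is to establish the matching lower and upper bounds $\mu(G)\ge B_G$ and $\mu(G)\le B_G$ separately, where $B_G$ is the claimed value in each type, handling the lower bound by explicit construction and the upper bound by a structural/density argument.

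\smallskip
\noindent\textbf{Lower bounds.} In every case I would exhibit the extremal set as the preimage of a sum-free \emph{middle interval} under a surjective homomorphism onto a cyclic group. Since $G$ is abelian it admits a surjection $\phi\colon G\to \Z_d$ for every $d$ dividing its exponent $m$ (project onto a suitable invariant factor), and any prime dividing $n$ also divides $m$ by Cauchy's theorem. If $G$ is type \rom{1}$(p)$, take $d=p$ and pull back the interval $\{k+1,\dots,2k+1\}\subseteq\Z_p$, where $p=3k+2$; this is sum-free of size $(p+1)/3$, so its preimage is sum-free of size $(n/p)(p+1)/3=n(\tfrac13+\tfrac1{3p})$. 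If $G$ is type \rom{2}, take $d=3$ and pull back $\{1\}\subseteq\Z_3$ to get a sum-free set of size $n/3$. If $G$ is type \rom{3}, every prime divisor of $n$ is $\equiv1\pmod3$, so the exponent satisfies $m\equiv1\pmod3$; taking $d=m$ and pulling back $\{k+1,\dots,2k\}\subseteq\Z_m$ with $m=3k+1$ gives a sum-free set of size $(n/m)(m-1)/3=n(\tfrac13-\tfrac1{3m})$. Sum-freeness is inherited from the target in each case since $\phi$ is a homomorphism.

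\smallskip
\noindent\textbf{Upper bounds.} The basic tool is Kneser's theorem. A sum-free set satisfies $S\cap(S+S)=\emptyset$, so $|S|+|S+S|\le n$; writing $H=H(S+S)$ for the stabiliser of $S+S$, Kneser's theorem gives $|S+S|\ge 2|S|-|H|$, whence
\[
|S|\le \frac{n+|H|}{3}.
\]
Since $S+S$ is $H$-periodic and disjoint from $S$, the image $\bar S$ of $S$ in $\bar G:=G/H$ is sum-free with $H(\bar S+\bar S)$ trivial. For types \rom{1} and \rom{2} this can be pushed through classically (Diananda--Yap): one analyses which subgroups $H$ may occur, reduces to a base case on $\bar G$ built around the governing prime ($p$, respectively $3$), and reads off the stated constant. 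Already the integrality of $|S|$ closes type \rom{2} when $H$ is trivial, since $3\mid n$ forces $\lfloor (n+1)/3\rfloor=n/3$.

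\smallskip
\noindent The main obstacle is the upper bound in type \rom{3}. Here Kneser's theorem alone is \emph{not} tight: with $H$ trivial it only yields $|S|\le (n+1)/3$, which strictly exceeds the target $n(\tfrac13-\tfrac1{3m})$. Because no prime $\equiv2\pmod3$ divides $n$, one must show that every sum-free set falls below this naive bound, and pinning down the exact deficit $\tfrac1{3m}$ requires the finer Fourier-analytic argument of Green--Ruzsa. The idea is that the vanishing of the number of Schur triples, expressed via characters as $\sum_{\gamma\in\hat G}\widehat{1_S}(\gamma)^2\,\overline{\widehat{1_S}(\gamma)}=0$, forces some nontrivial character $\gamma$ to carry a large coefficient; this means $S$ correlates with a union of cosets of $H=\ker\gamma$, so that $\bar G=G/H$ is cyclic and $S$ is close to the pullback of a middle interval, the worst case being $\bar G\cong\Z_m$. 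An exact optimisation (rather than a merely approximate stability step) over the admissible subgroups $H$ then delivers the precise value $n(\tfrac13-\tfrac1{3m})$. I expect this extraction of a large character, together with the exact optimisation needed to obtain the sharp constant, to be the crux of the whole argument.
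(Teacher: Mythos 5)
First, a point of comparison: the paper does not prove this statement at all — Theorem 1.2 is quoted from the literature, with the type \rom{1}/\rom{2} cases attributed to Diananda--Yap and the type \rom{3} case to Green--Ruzsa. So your proposal has to stand as a self-contained argument. Its strongest part is the lower bounds, which are correct and complete: $G$ surjects onto $\Z_d$ for every $d$ dividing the exponent, the middle intervals you name are sum-free in $\Z_p$, $\Z_3$ and $\Z_m$ respectively, preimages of sum-free sets under homomorphisms are sum-free, and the cardinalities match. Your Kneser framework for the type \rom{1}/\rom{2} upper bounds is also sound and genuinely completable along the lines you indicate: when the stabiliser $H$ of $S+S$ has $|H|\le n/p$ one is done at once, and otherwise the quotient $\bar G=G/H$ has order $\bar n$ all of whose prime factors are $\equiv 0,1\pmod 3$, so $\bar n\equiv 0$ or $1\pmod 3$, and the trivial-stabiliser bound $|\bar S|\le(\bar n+1)/3$ improves by integrality to $\bar n/3$ or $(\bar n-1)/3$, whence $|S|\le|H||\bar S|\le n/3$. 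You did not write out this case analysis, but it is routine.

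The genuine gap is the type \rom{3} upper bound, and you flag it yourself. What you give there is a one-sentence gloss of the Green--Ruzsa strategy: vanishing of the Schur-triple count in Fourier form, extraction of a large nontrivial character, correlation of $S$ with cosets of $\ker\gamma$, reduction to a cyclic quotient, and an ``exact optimisation.'' None of this is carried out, and each step hides real work: the passage from one large Fourier coefficient to ``$S$ is close to the pullback of a middle interval'' is precisely the hard stability theorem, and upgrading an approximate structural statement to the \emph{exact} bound $n\left(\frac13-\frac1{3m}\right)$ — with $m$ specifically the exponent of $G$, not the order of an arbitrary cyclic quotient — is the most delicate part of Green and Ruzsa's paper and does not follow from the outline as written. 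As it stands, for type \rom{3} your proposal is a plan plus a citation rather than a proof, so the attempt is incomplete exactly where the theorem is hardest.
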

The case of type \rom{1} and \rom{2} groups in Theorem~\ref{thmcharacter} is due to Diananda and Yap~\cite{yan}; the case of type \rom{3} groups is due to Green and Ruzsa~\cite{GR-g}. 
Notice that Theorem~\ref{thmcharacter} tells us that  for every finite abelian group $G$ of order $n$, we have that
$2n/7 \leq \mu (G)\leq n/2$.

In recent years there has been significant interest  in the  study of transferring (combinatorial) theorems into the random setting; see, e.g., the  survey of Conlon~\cite{conlonsurvey}.
The next result provides a random analogue of Theorem~\ref{thmcharacter}. Note that 
it is implicit in the literature (e.g., 
it is  a simple corollary of Theorem~5.2 from~\cite{bms} and a removal lemma of Green~\cite[Theorem 1.5]{G-R}; see also~\cite{conlongowers, saxton, schacht}).

Given a finite abelian group 
$G_n$,
let $G_{n,p}$ be a random subset of $G_n$ obtained by including each element of $G_n$ independently with probability $p$. Note that throughout the paper $\mathbb N$ 
denotes the set of positive integers (i.e., it does not contain $0$).
\begin{theorem}\label{thmrandom}
Let $ (G_n)_{n \in \mathbb N}$ be a sequence of finite abelian groups where $|G_i|<|G_j|$ and $\mu (G_i)/|G_i|=\mu (G_j)/|G_j|$
for all $i<j$. 
For any  $\eps >0$ there exists  $C>0$ such that the following holds.  If $p_n\geq C|G_n|^{-1/2}$ for each $n \in \mathbb N$,
    then
    $$\lim _{n \rightarrow \infty } \mathbb P (\text{the largest sum-free set in $G_{n,p_n}$ has size } (1\pm \eps)\mu (G_n)\cdot p_n) =1.$$ \qed
\end{theorem}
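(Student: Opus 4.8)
The plan is to prove matching upper and lower bounds on the size of a largest sum-free subset of $G_{n,p_n}$, each holding with probability tending to $1$. Throughout write $n:=|G_n|$; since $|G_i|<|G_j|$ forces $n\to\infty$ along the sequence, and since Theorem~\ref{thmcharacter} together with $2|G_n|/7\le\mu(G_n)\le|G_n|/2$ gives $\mu(G_n)=\Theta(n)$, while the hypothesis $\mu(G_i)/|G_i|=\mu(G_j)/|G_j|$ keeps this density constant, all constants below may be chosen uniformly in $n$. Encode the problem via the $3$-uniform hypergraph $H_n$ on vertex set $G_n$ whose edges are the distinct Schur triples $\{x,y,z\}$ with $x+y=z$; then sum-free sets are the independent sets of $H_n$ (degenerate Schur triples contribute only $O(n)$ further constraints and do not affect the asymptotics), we have $e(H_n)=\Theta(n^2)$, each element lies in $\Theta(n)$ edges, and each pair lies in $O(1)$ edges.

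For the lower bound, fix a sum-free set $S\subseteq G_n$ with $|S|=\mu(G_n)$. Then $S\cap G_{n,p_n}$ is automatically sum-free and $\mathbb E|S\cap G_{n,p_n}|=\mu(G_n)p_n$. Since $\mu(G_n)p_n\ge(2C/7)\,n^{1/2}\to\infty$, a Chernoff bound gives $|S\cap G_{n,p_n}|\ge(1-\eps)\mu(G_n)p_n$ with probability $1-o(1)$, forcing the largest sum-free subset of $G_{n,p_n}$ to have size at least $(1-\eps)\mu(G_n)p_n$.

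The substance is the upper bound, for which I would invoke the hypergraph container method (Theorem~5.2 of~\cite{bms}). Applied to $H_n$, it yields a family $\mathcal C_n$ of \emph{containers} $C\subseteq G_n$ such that every sum-free set lies in some $C\in\mathcal C_n$, each $C$ spans at most $\delta n^2$ Schur triples, and $\log|\mathcal C_n|=O(n^{1/2}\log n)$ (the exponent $1/2$ being dictated by the co-degree bounds above). Green's removal lemma (Theorem~1.5 of~\cite{G-R}) then applies: for $\delta=\delta(\eps)$ small enough, any set spanning at most $\delta n^2$ Schur triples can be made sum-free by deleting $o(n)$ elements, so every container satisfies $|C|\le\mu(G_n)+o(n)\le(1+\eps/2)\mu(G_n)$. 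As any sum-free subset of $G_{n,p_n}$ sits inside some $C\cap G_{n,p_n}$, it suffices to show that with probability $1-o(1)$ no $C\in\mathcal C_n$ has $|C\cap G_{n,p_n}|\ge(1+\eps)\mu(G_n)p_n$. For a single container, $|C\cap G_{n,p_n}|$ is a binomial with mean at most $(1+\eps/2)\mu(G_n)p_n$, so Chernoff bounds the probability of exceeding $(1+\eps)\mu(G_n)p_n$ by $\exp(-c\eps^2\mu(G_n)p_n)$, and a union bound gives failure probability at most $|\mathcal C_n|\exp(-c\eps^2\mu(G_n)p_n)$.

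The crux — and the main obstacle — is to make this union bound close at the threshold $p_n=\Theta(n^{-1/2})$, where $\mu(G_n)p_n=\Theta(n^{1/2})$ is comparable to $\log|\mathcal C_n|$. This is precisely the critical scale for Schur triples: a random set of density $p$ carries $\asymp p^3n^2$ Schur triples, which balances its $\asymp pn$ elements exactly when $p\asymp n^{-1/2}$, so below this threshold the extremal constraint is not felt and the statement fails. Because the crude container count $\exp(O(n^{1/2}\log n))$ loses a stray factor of $\log n$ against $\exp(c\eps^2\mu(G_n)p_n)$, the naive union bound is just short of summable; the role of the transference theorem (Theorem~5.2 of~\cite{bms}) is exactly to carry out this comparison sharply, packaging the container iteration and concentration so that it succeeds for all $p_n\ge Cn^{-1/2}$ with $C=C(\eps)$, with Green's removal lemma supplying the supersaturation/stability input that pins the extremal size at $(1+o(1))\mu(G_n)$. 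Combining this upper bound with the lower bound above yields that the largest sum-free subset of $G_{n,p_n}$ has size $(1\pm\eps)\mu(G_n)p_n$ with probability $1-o(1)$, as claimed.
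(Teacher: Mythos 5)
Your proposal is correct and follows essentially the same route as the paper, which gives no self-contained proof but derives the theorem exactly as you do: the trivial Chernoff lower bound on a fixed maximum sum-free set, and the upper bound via the transference theorem (Theorem~5.2 of~\cite{bms}) with Green's removal lemma~\cite[Theorem 1.5]{G-R} supplying the supersaturation input. Your added observation that a naive container-plus-union-bound argument loses a factor of $\log n$ at the threshold $p_n=\Theta(|G_n|^{-1/2})$, so that the full strength of the transference theorem is genuinely needed, is accurate and consistent with why the paper cites that theorem rather than the container count alone.
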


As an illustrative example, a natural class to take in Theorem~\ref{thmrandom} is
$G_n:= \mathbb Z_q ^n= \mathbb Z_q \oplus \dots \oplus\mathbb Z_q$ for some fixed prime $q$. In particular, Theorem~\ref{thmcharacter}
implies that $\mu (G_i)/|G_i|=\mu (G_j)/|G_j|$
for all $i,j\in \mathbb N$.

Let $M$ be a largest sum-free subset of a finite abelian group $G_n$. Note that 
$\mathbb E (|G_{n,p} \cap M|)=\mu (G_n) \cdot p$. Thus,
 Theorem~\ref{thmrandom} can be interpreted as follows: in a `typical' subset $S$ of $G_n$ of size significantly more than $\sqrt{|G_n|}$, 
 the largest sum-free subset of $S$ has size close to what is `expected', i.e., close to $|S\cap M|$.
 Furthermore, it is not difficult to show that the bound on the probability $p_n$ in Theorem~\ref{thmrandom} is essentially best possible. 

 Note that there are also related results on  the structure of maximum-size sum-free subsets of a random subset of an abelian group $G$; see~\cite{b2, bush}.
 

\section{Maximal sum-free subsets}\label{sec2}
There has  been interest in the number of sum-free subsets $f(G)$ of a finite abelian group $G$. By considering all possible subsets of a largest sum-free subset in $G$, one obtains that $f(G) \geq 2^{\mu (G)}$. It turns out this trivial bound is not far from being tight. Indeed, Green and Ruzsa~\cite{GR-g} proved that $f(G)=2^{\mu(G)+o(n)}$ for all abelian groups $G$ of order $n$. A refined version of this theorem for type \rom{1} groups was obtained by Alon, Balogh, Morris and Samotij~\cite{alon}. Specifically, for such type \rom{1} groups they asymptotically determine the number of sum-free sets of a given size $m$, for all not too small $m$.

We now consider the \emph{maximal} sum-free version of this problem.  That is, we consider the number of maximal sum-free subsets $f_{\max}(G)$ in a finite abelian group $G$. This problem is wide open in general, though there now are several results on this topic. Improving an earlier bound of Wolfovitz~\cite{wolf}, in~\cite{BLST2} it was proven that $f_{\max} (G) \leq 3^{\mu(G)/3+o(n)}$ for all abelian groups $G$ of order $n$; in particular, this shows that $f_{\max}(G)\ll f(G)$ for all sufficiently large abelian groups. Moreover, Liu and Sharifzadeh~\cite{ls} showed that $f_{\max}(G)= 3^{\mu(G)/3+o(n)}$ for all type \rom{2} groups $G$ whose order $n$ is divisible by $9$. In contrast, there are groups where this upper bound is far from tight. In~\cite{BLST2} it was shown that $f_{\max}(\mathbb Z^k_2)=2^{\mu(G)/2+o(n)}$, and this bound was further refined in~\cite{HT} (see Theorem~\ref{thm23} below). Furthermore, in~\cite{ls} it was shown that there is a $c>0$ such that almost all even order abelian groups $G$ satisfy $f_{\max}(G) \leq 2^{(1/2-c)\mu (G)}$. Constructions from~\cite[Proposition 5.7]{BLST2} and~\cite[Proposition 5.3]{HT} show that $f_{\max} (G)\geq 2^{\mu(G)/2-2}$ for all type \rom{3} abelian groups whose exponent is 7, 13 or 19. See~\cite{BLST2, HT, ls} for further results on $f_{\max}(G)$.

These results paint a rather mixed picture, and it remains unclear how $f_{\max}(G)$ behaves in general. However, the work in~\cite{BLST2, ls} implicitly raises the following question.
\begin{question}\label{quesX}
    Given an abelian group $G$ of order $n$, is it true that 
    either $f_{\max}(G)= 3^{\mu(G)/3+o(n)}$ or $f_{\max}(G)\leq  2^{\mu (G)/2+o(n)}$?
\end{question}

{\noindent \bf Additional note.} Since we submitted this paper, Question~\ref{quesX} has been answered in the negative by Balogh, Garcia, Liu and Yang~\cite{bgly}.
In fact, they have tackled a couple of the other open problems stated in this paper; we further discuss this at the relevant points.

\smallskip

For proving upper bounds on $f_{\max}(G)$ there is now a somewhat well-trodden approach that makes use of the \emph{container method}; see~\cite{BLST, BLST2, HT, ls}. We will use this approach later in the paper also. Using this approach, in~\cite{HT} we proved the following sharp result.
\begin{theorem}\cite{HT} \label{thm23}
  If $k \in \mathbb N$ and $n:=2^k$, then
$\fm (\mathbb Z^k_2)= \left (\binom{n-1}{2}+o(1) \right ) 2^{n/4}.$ Further,
if $k\in\N$ and $n:=3^k$, then
$\fm(\Z_3^k)=\left(\frac{(n-3)(n-1)}{3}+o(1)\right) 3^{n/9}.$  
\end{theorem}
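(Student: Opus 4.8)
The statement is an equality, so I would prove it by a matching lower and upper bound, treating the two groups in parallel; let me describe $\Z_2^k$ in detail and indicate the (minor) changes for $\Z_3^k$. The lower bound is what pins down the constant, so I would start there with an explicit construction. By Theorem~\ref{thmcharacter}, $\mu(\Z_2^k)=n/2$, and the $n-1$ maximum sum-free sets are exactly the nontrivial cosets $A=G\setminus H$ of the index-$2$ subgroups $H$. Fix such a hyperplane $H$ and a nonzero $e\in H$. Translation by $e$ is a fixed-point-free involution of $A$, partitioning $A$ into $n/4$ pairs $\{a,a+e\}$. For any transversal $A'$ (one element of each pair), set $S:=\{e\}\cup A'$. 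Then $S$ is sum-free: sums within $A'$ land in $H$ and so avoid $A'$; $e+a'=a'+e$ is the partner of $a'$, which is excluded from $A'$; and $a'+a''=e$ would force $a''=a'+e\notin A'$. There are $2^{n/4}$ transversals, and a union bound over the $h\in H\setminus\{0,e\}$ shows that for all but a $o(1)$-proportion of them no further element can be added, so $S$ is maximal. Since the number of pairs $(H,e)$ is $(n-1)(n/2-1)=\binom{n-1}{2}$, and $(H,e,A')$ can be recovered from $S$ (as $e=S\cap H$ and $A'=S\setminus\{e\}$), this yields $\fm(\Z_2^k)\ge\big(\binom{n-1}{2}-o(1)\big)\,2^{n/4}$. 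For $\Z_3^k$ one takes $e$ of order $3$, so $\langle e\rangle$ acts freely on $A$ with orbits of size $3$ and there are $3^{n/9}$ transversals; the configurations now range over the $(n-1)/2$ index-$3$ subgroups, their $2$ nontrivial cosets, and the $(n-3)/3$ nonzero $e\in H$, giving the prefactor $\tfrac{n-1}{2}\cdot 2\cdot\tfrac{n-3}{3}=\tfrac{(n-1)(n-3)}{3}$.

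For the matching upper bound I would use the container method in the now-standard form for this problem (as in \cite{BLST2,HT}). This produces a family of $2^{o(n)}$ containers, each almost sum-free, such that every sum-free set lies in one of them; combined with Green's removal lemma \cite{G-R} and the extremal characterisation of Theorem~\ref{thmcharacter}, every container relevant to the count is within $o(n)$ symmetric difference of some maximum sum-free set $A=G\setminus H$. I would then bound $\fm(\Z_2^k)$ by summing, over containers, the number of maximal sum-free sets they host. The point is that a container of the shape $A$ together with a few elements of $H$ hosts essentially only the maximal sets of the construction above: the maximality requirement forces the pair/transversal structure, so such a container contributes at most $(1+o(1))\,2^{n/4}$ maximal sets, and summing over the $\Theta(n^2)$ extremal containers matches the lower bound. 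All remaining containers must then be shown to host only $o(2^{n/4})$ maximal sets in total (respectively $o(3^{n/9})$ for $\Z_3^k$).

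The main obstacle is obtaining the \emph{exact} polynomial prefactor rather than the soft bound $2^{n/4+o(n)}$. This forces three quantitative steps to be carried out with error terms that stay $o(1)$ even after multiplication by the $\Theta(n^2)$ configurations: (i) a stability statement precise enough to guarantee that near-extremal sum-free sets are genuinely of the single-coset-plus-$\{e\}$ shape; (ii) a careful treatment of maximality, since a sum-free set $\{e\}\cup A'$ need not be maximal and one must verify that the exceptional transversals, and the effect of completing non-maximal sets, are negligible; and (iii) control of overcounting, so that a single maximal set is not charged to many pairs $(H,e)$ or to many containers. Keeping each of these error sources below the $\binom{n-1}{2}$ (respectively $\tfrac{(n-1)(n-3)}{3}$) prefactor is the delicate part of the argument.
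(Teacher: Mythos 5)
First, a contextual point: this paper does not actually prove Theorem~\ref{thm23}; it is quoted from~\cite{HT}, and the present paper only re-deploys the same machinery (containers, stability, link graphs) to prove Theorem~\ref{fms even groups2}. Measured against that machinery, your lower bound is sound and is essentially the construction the authors use throughout (compare the $\Z_5^k$ construction in Section~\ref{sec2} and Propositions~\ref{prop1} and~\ref{fms even groups2c}): coset complement $G\setminus H$, seed $e\in H$, transversal of the translation pairing, and maximality for all but a $o(1)$-fraction of transversals by a union bound. One small repair is needed: recovering $e$ as $S\cap H$ presupposes that $H$ is known, so you must also rule out the same set arising from two distinct pairs $(H,e)\neq(H',e')$; this is fine because for a maximal such $S$ one has $H=\bigl((A'+A')\cup\{e\}\bigr)$, and in any case cross-pair collisions number only polynomially many, which is negligible against $2^{n/4}$.

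The genuine gap is in the bookkeeping of your upper bound. You propose to sum, over containers, the number of maximal sets each hosts, claiming each extremal container hosts at most $(1+o(1))2^{n/4}$ and that there are $\Theta(n^2)$ extremal containers. Neither claim survives scrutiny: the container family has $2^{o(n)}$ members, not $\Theta(n^2)$; a single container $F=(G\setminus H)\cup B$ with $|B|\le\delta n$ hosts the sets $\{e\}\cup A'$ for \emph{every} seed $e\in B\cap(H\setminus\{0\})$, hence up to $|B|\cdot 2^{n/4}\gg 2^{n/4}$ maximal sets; and one maximal set can lie in many containers, so per-container summation cannot recover a sharp constant. The way the actual proof (in~\cite{HT}, mirrored in the proof of Theorem~\ref{fms even groups2} here) avoids this is to classify the maximal sets themselves rather than the containers: after stability (Lemma~\ref{large sfs groups I(p)}) and after enlarging the sum-free part of a container to the full coset, every maximal set $M$ determines a pair $(G\setminus H,\,S)$ with $S=M\cap H$, and one sums $\mis(L_S[G\setminus H])$ over pairs. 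The pairs with $|S|=1$ give perfect-matching link graphs (disjoint triangles for $\Z_3^k$), each with $\mis = 2^{n/4}$ (resp.\ $3^{n/9}$), and they number exactly $(n-1)(n/2-1)=\binom{n-1}{2}$ (resp.\ $(n-1)(n-3)/3$); all pairs with $|S|\neq 1$ are grouped into types whose total contribution is at most $2^{2\delta n}$ times a $\mis$ bound of the form $2^{(1/4-\eps)n}$ (resp.\ $3^{(1/9-\eps)n}$). Your step (iii) gestures at this overcounting issue, but reorganizing the count by $(H,S)$-pairs is not an optimization of your plan --- it is the step that makes the sharp prefactor provable, and the container-by-container summation you describe does not give the theorem.
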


Perhaps the next natural case to consider is 
$\fm (\mathbb Z^k_5)$. On the positive side, a recent result of Lev~\cite{lev} gives a structural characterisation of the large sum-free subsets of 
$\mathbb Z^k_5$; this result should very likely be needed in order to obtain asymptotically exact bounds on $\fm (\mathbb Z^k_5)$.
However, despite having this result at hand,
and receiving very nice suggestions from Vsevolod Lev and Wojciech Samotij, we  were unable to obtain such sharp bounds. 
We therefore state the following as a conjecture.
\begin{conjecture}\label{conj5}
    If $k \in \mathbb N$ and $n:=5^k$, then
$\fm (\mathbb Z^k_5)= 3^{n/10+o(n)}.$
\end{conjecture}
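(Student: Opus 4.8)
The plan is to prove matching upper and lower bounds on the exponent, i.e.\ to show $\fm(\mathbb Z_5^k) \ge 3^{(1-o(1))n/10}$ and $\fm(\mathbb Z_5^k) \le 3^{(1+o(1))n/10}$. Throughout set $n:=5^k$ and note that, since $5\equiv 2 \pmod 3$, the group $G:=\mathbb Z_5^k$ is type \rom{1}$(5)$, so by Theorem~\ref{thmcharacter} we have $\mu(G)=2n/5$. In particular the target exponent satisfies $n/10=\mu(G)/4<\mu(G)/3$, so the conjecture asserts that the general bound $\fm(G)\le 3^{\mu(G)/3+o(n)}$ from~\cite{BLST2} is not tight for these groups. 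The extremal sum-free sets are governed by a surjective homomorphism $\phi\colon G\to \mathbb Z_5$: writing $A_i:=\phi^{-1}(i)$, each of $A_2\cup A_3$ and $A_1\cup A_4$ is sum-free of size $2n/5=\mu(G)$, mirroring the two maximal sum-free sets $\{2,3\}$ and $\{1,4\}$ of $\mathbb Z_5$.

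For the lower bound I would construct, inside a near-extremal configuration, roughly $n/10$ pairwise non-interacting ``triangle gadgets'', each offering exactly three mutually exclusive but individually maximal choices. Concretely, I would fix a sum-free ``core'' $C$ close to $A_2\cup A_3$ and exhibit disjoint triples $\{a_i,b_i,c_i\}$ such that (i) in any sum-free set containing $C$ at most one element of each triple may be added --- the conflicts within a triple being forced partly by doubling relations (recall that $x$ and $2x$ never lie together in a sum-free set, since $x+x=2x$) and partly by requiring a sum such as $a_i+b_i$ to lie in $C$ --- and (ii) adding exactly one element of each triple yields a maximal sum-free set, with distinct choices across the $\approx n/10$ triples giving distinct sets. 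This produces $3^{(1-o(1))n/10}$ maximal sum-free sets. The delicate point is to realise the gadgets as honest $K_3$'s in the relevant conflict graph (pure doubling relations inside a single $\mathbb Z_5$-line only produce a $4$-cycle, hence a factor $2$), and to arrange the core so that the gadgets are genuinely independent and each configuration is maximal; achieving the full count of $n/10$ gadgets is what fixes the constant.

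For the upper bound I would follow the now-standard container approach of~\cite{BLST, BLST2, HT, ls}. First, the hypergraph container method applied to the Schur hypergraph of $G$ yields a family of at most $2^{o(n)}$ containers, each almost sum-free, such that every maximal sum-free set lies inside some container; it therefore suffices to bound the number of maximal sum-free subsets of a typical container by $3^{(1+o(1))n/10}$. Second, I would invoke Lev's structural theorem~\cite{lev}: a sum-free set of density close to $2/5$ must be close to $A_2\cup A_3$ or $A_1\cup A_4$ for some $\phi$, so up to a $2^{o(n)}$ factor we may assume each relevant container is close to a fixed $A_2\cup A_3$. Finally, within such a container I would analyse the ``link'' obtained by recording, for each element, the Schur constraints it imposes, and show that counting maximal sum-free sets reduces to counting maximal independent sets in a graph that is, up to $o(n)$ vertices, a disjoint union of at most $n/10$ components each admitting at most three maximal independent sets.

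The main obstacle is the last step of the upper bound, matched against the last step of the lower bound: pinning down the exact constant $1/10$ requires understanding the precise component structure of the conflict/link graph on $\mathbb Z_5^k$ near the extremal configuration, and in particular ruling out configurations that would force a larger exponent. This is an extremal problem about the Cayley sum structure of $\mathbb Z_5^k$ rather than a routine container computation, and it is presumably exactly the point at which the sharp analysis --- even with Lev's theorem and the suggestions of Lev and Samotij at hand --- becomes difficult. A natural first sanity check would be to compute $\fm(\mathbb Z_5^2)$ and $\fm(\mathbb Z_5^3)$ exactly and confirm the gadget count, as was done for the base cases underlying Theorem~\ref{thm23}.
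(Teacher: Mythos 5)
There is a genuine gap here, and it is worth being blunt about its nature: the statement you are addressing is stated in the paper as a \emph{conjecture} precisely because it is open. The paper proves only the lower bound $\fm(\Z_5^k)\geq 3^{n/10-1/2}$ (via an explicit construction), and explicitly records that the authors could not obtain the matching upper bound even with Lev's structure theorem~\cite{lev} in hand and with suggestions from Lev and Samotij. Your proposal does not close this gap. For the upper bound you outline the standard pipeline (containers, then Lev's theorem to reduce to containers close to $\psi^{-1}(\{2,3\})$, then maximal independent sets in link graphs), but the decisive step --- showing that for \emph{every} admissible choice of $S$ the link graph of $S$ on the near-extremal container has at most $3^{(1+o(1))n/10}$ maximal independent sets, rather than the Moon--Moser bound $3^{|A|/3}=3^{2n/15+o(n)}$ of~(\ref{Moon-Moser}) --- is exactly the step you defer. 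Nothing in your outline rules out sets $S$ whose link graph contains many triangle components, which is what drives the $3^{n/3}$-type bounds in all prior work; your final paragraph names this obstacle, but naming it is not the same as overcoming it. As written, the conjecture remains unproven by your argument, just as it is unproven in the paper.

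Your lower bound is also only a sketch, and its guiding picture differs from the construction the paper actually gives. The paper takes $s:=\{1\}\oplus\{0\}$ and works inside $\{s\}\cup(\{2,3\}\oplus\Z_5^{k-1})$; for each orbit $\{b,-b-s\}$ of the involution $b\mapsto-b-s$ on $B_2=\{2\}\oplus\Z_5^{k-1}$ (the fixed point $2s$ being discarded), the four elements $b,\,b+s,\,-b,\,-b-s$ span a \emph{path} in the link graph of $\{s\}$ (in the sense of~\cite{BLST}), with edges $\{b,b+s\}$, $\{b+s,-b\}$, $\{-b,-b-s\}$ coming from Schur triples through $s$. A four-vertex path has exactly three maximal independent sets, here $\{b,-b\}$, $\{b,-b-s\}$ and $\{b+s,-b-s\}$, which are precisely the paper's three choices of $A_b$; ranging over the $(n/5-1)/2$ orbits gives $3^{n/10-1/2}$ sets lying in pairwise distinct maximal sum-free subsets. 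So the factor $3$ per ten group elements comes from $P_4$-gadgets whose choices are \emph{pairs}, not from $K_3$-gadgets whose choices are single elements; your worry about realising ``honest $K_3$'s'' is beside the point, but since you exhibit no concrete core, no concrete gadgets, and no verification that distinct choices land in distinct maximal sum-free sets, this half of your proposal is also incomplete --- though, unlike the upper bound, it can be completed along essentially the lines you sketch by substituting the paper's construction.
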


The following construction shows that the bound on  $\fm (\mathbb Z^k_5)$ in Conjecture~\ref{conj5} cannot be lowered. Let $B_2:=\{2\}\oplus\Z_5^{k-1}$, and $s:=\{1\}\oplus\{0\}$. The map $\phi:b\mapsto -b-s$ is an involution of $B_2$, and its only fixed point is $2s$. In particular, there are $(|B_2|-1)/2$ orbits $\{b,\phi(b)\}$ in $B_2$ of cardinality 2. Now consider the sets of the form
$$\{s\}\cup\bigcup_{\{b, \phi(b)\}\in B_2/\phi} A_b,$$
where the union ranges over all orbits of cardinality $2$ of $\phi$, and for each such orbit $\{b,\phi(b)\}$, we choose  $A_b$ to be one of $\{b,-b\}$, $\{b,-b-s\}$ or $\{b+s,-b-s\}$. This choice ensures that one can generate $3^{(|B_2|-1)/2}=3^{n/10-1/2}$  sets $S$ of this form. 
Further, each such set $S$ is sum-free; this follows as $S\setminus \{s\} \subseteq \{2,3\}\oplus\Z_5^{k-1}$ is sum-free and the only Schur triples (other than $s,s,2s$) in $\{s\} \cup (\{2,3\}\oplus\Z_5^{k-1})$ contain precisely two of $\{b,-b,b+s,-b-s\}$ for some $b \in B_2$.
Moreover, one 
can check that each of these sets lies in a different maximal sum-free subset  of $\Z_5^k$. Indeed, given two distinct such sets $S,S'$ the choice of $A_b$ for $S$ and $S'$ must differ for at least one $\{b, \phi(b)\}\in B_2/\phi$. But in this case $S \cup S'$ will contain a Schur triple (formed using $s$ and two of $b$, $-b$, $b+s$ and $-b-s$).
This proves that $\fm(\Z_5^k)\geq 3^{n/10-1/2}$.

\smallskip

Recently, further structural results for large sum-free subsets of 
$\mathbb Z^k_p$ were obtained when $p$ is a fixed prime where $p\equiv 2 \pmod{3}$; see~\cite{RS, ve}. These results may also be useful for the $\fm (\mathbb Z^k_p)$ problem more generally, although new ideas will again be needed to obtain sharp results.

\section{Maximal distinct sum-free subsets}
Let $G$ be an abelian group of order $n$, and let $f^\star(G)$ denote the number of distinct sum-free subsets in $G$. 
Let $f^\star_{\max} (G)$ denote the number of maximal distinct sum-free subsets in $G$. 
As pointed out in~\cite{HT}, the removal lemma of Green~\cite{G-R} implies that $\mu(G)\leq \mu ^\star(G) \leq \mu (G)+o(n)$ and $f(G) \leq f^\star(G) \leq 2^{o(n)} f(G)$. 
By arguing as in the proof that 
$f_{\max} (G) \leq 3^{\mu(G)/3+o(n)}$ for all abelian groups $G$ of order $n$~\cite[Proposition~5.1]{BLST2}, one can also conclude
that $f^\star_{\max} (G) \leq 3^{\mu(G)/3+o(n)}$.
However,
we do not know whether 
$f_{\max}(G)$ is bounded from above by the number of maximal distinct sum-free subsets $f_{\max}^\star(G)$ of $G$ for \emph{all} finite abelian groups $G$.
On the other hand, in~\cite[Section~6.3]{HT} we showed that
there are abelian groups $G$ for which $f_{\max}^\star(G)$ is exponentially larger than $f_{\max}(G)$.

At least for type \rom{1} abelian groups, we believe we have a clearer picture as to the behaviour of $f_{\max}^\star(G)$. Indeed, we conjecture the following.

\begin{conjecture}\label{conj2}
Suppose that  $G$ is a type $I$ abelian group of order $n$. Then $\fms(G)=2^{\mu (G)/2+o(n)}$.
\end{conjecture}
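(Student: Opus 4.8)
The plan is to prove matching bounds $\fms(G)=2^{\mu(G)/2\pm o(n)}$. Throughout, let $p$ be the smallest prime $\equiv 2\pmod 3$ dividing $n$, so that $\mu(G)=n(1/3+1/(3p))$ by Theorem~\ref{thmcharacter}, and fix a surjective homomorphism $\pi\colon G\to\Z_p$ (which exists since $p\mid |G|$ and $G$ is abelian). Let $I\subseteq\Z_p$ be the middle interval of length $(p+1)/3$, so that $M:=\pi^{-1}(I)$ is a largest sum-free subset of $G$ with $|M|=\mu(G)$; note that $M$ in fact contains no Schur triple at all, since $I+I$ is disjoint from $I$ in $\Z_p$. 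All of the flexibility in building many maximal sets will therefore live on the boundary cosets $\pi^{-1}(\partial I)$.

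For the lower bound I would generalise the explicit $\Z_5^k$ construction given above together with the construction underlying Theorem~\ref{thm23} for $\Z_2^k$. The key gain in the distinct setting is that a pair $\{x,2x\}$ spans only the \emph{non}-distinct triple $x+x=2x$ and is therefore permitted, giving extra binary freedom along $\pi^{-1}(\partial I)$ that is unavailable for ordinary sum-free sets. Pairing up $\approx \mu(G)/2$ elements near the boundary of $I$ into independently resolvable swap-pairs, each offering two ways to remain maximal, one should construct $2^{\mu(G)/2-o(n)}$ maximal distinct sum-free sets; distinctness of the resulting sets would be verified exactly as above, since any two different choices force a (distinct) Schur triple in the union. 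I expect this direction to be essentially routine once the correct pairing is pinned down.

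For the upper bound I would run the container method as in~\cite{BLST2,HT,ls}. This yields $2^{o(n)}$ containers, each of size at most $\mu^*(G)+o(n)=\mu(G)+o(n)$, such that every distinct sum-free set lies in one; applying Green's removal lemma~\cite{G-R} together with the stability theory for large sum-free subsets of type~\rom{1} groups (Lev~\cite{lev} for $\Z_p^k$, and~\cite{RS,ve} more generally) one may further restrict to containers $C$ differing from some $M=\pi^{-1}(I)$ in only $o(n)$ elements. It then remains to bound, for each such structured $C$, the number of maximal distinct sum-free sets $S\subseteq C$. Since $\pi^{-1}(I)$ is itself triple-free, all the action is concentrated in an auxiliary conflict graph $\Gamma_C$ on the $O(\mu(G))$ elements that genuinely participate in a distinct Schur triple with $C$, recording which inclusions are mutually exclusive; maximal distinct sum-free subsets of $C$ then correspond to maximal independent sets of $\Gamma_C$, up to a $2^{o(n)}$ factor absorbing the exceptional elements.

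The crux---and the reason this is stated as a conjecture---is to show that for type~\rom{1} groups the conflict graph $\Gamma_C$ is essentially a matching, hence has only $2^{|V(\Gamma_C)|/2}=2^{\mu(G)/2+o(n)}$ maximal independent sets, rather than the triangle-rich structure (with $3^{\mu(G)/3}$ maximal independent sets) that drives the type~\rom{2} count in~\cite{ls}. Heuristically, because $p\equiv 2\pmod 3$ controls the arithmetic of $I$ and its translates, each boundary element collides with $M$ through a \emph{single} swap rather than a genuine triangle of choices; but making this precise---uniformly over all $2^{o(n)}$ containers, over all type~\rom{1} groups rather than just $\Z_p^k$, and while correctly accounting for the extra pairs $\{x,2x\}$ that the distinct condition permits---is exactly the obstacle that the currently available structural results (even Lev's~\cite{lev}) do not seem strong enough to surmount. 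A successful resolution would presumably require a robust, container-stable refinement of the type~\rom{1} stability theorem, strong enough to force $\Gamma_C$ down to a matching for \emph{every} near-extremal container simultaneously.
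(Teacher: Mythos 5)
You have not proved this statement, and in fact neither does the paper: it is stated there as a conjecture, and the paper's contribution consists precisely of the two partial results your plan gestures at. Your lower bound sketch is essentially Proposition~\ref{prop1} (for $p\geq 5$: take a maximum sum-free set $B$ with $B=-B$, include $0$, and pick one element from each orbit $\{x,-x\}$ of the involution $x\mapsto -x$; any two distinct choices conflict through the distinct Schur triple $0,x,-x$), together with Proposition~\ref{fms even groups2c} for $p=2$; this half is indeed routine and is carried out in the paper. Your upper bound plan likewise matches the paper's machinery: Lemma~\ref{distinct containers} plus the Green--Ruzsa structure lemma (Lemma~\ref{large sfs groups I(p)}) reduce the problem, via Lemma~\ref{extension link graph}, to bounding the number of maximal independent sets of the distinct link graphs $L^\star_S[A]$ with $A\subseteq \psi^{-1}(\{k+1,\dots,2k+1\})$, uniformly over all sum-free seeds $S$ drawn from the $o(n)$-sized residue of each container.

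The genuine gap is the one you flag yourself, and it is worth being precise about why it is not closable by a ``container-stable stability theorem'' alone. For $p=2$ the paper closes it (Theorem~\ref{fms even groups2}) by a complete case analysis of the seeds $S$ (types 0--6): there $\psi^{-1}(\{1\})$ is a single coset of an index-2 subgroup, the dominant seeds $S=\{0\}$ and $S=\{0,s\}$ with $s$ of order $2$ produce link graphs that decompose exactly into matchings, $K_4$'s and isolated vertices, and every other seed is beaten down below $2^{(1/4-\eps)n}$ by Lemmas~\ref{dense and regular graphs}, \ref{almosttrianglefree} and~\ref{stability}; even so, the argument needs $n/2^r\geq C$, and groups of the form $\Z_2^t\oplus H$ with $|H|$ small remain open. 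For $p\geq 5$ the preimage $\psi^{-1}(I)$ is a union of $(p+1)/3$ cosets, the link graphs of small seeds are no longer unions of matchings and $K_4$'s (already for $\Z_5^k$ one gets paths, prisms and richer components depending on $\psi(s)$ and the order of $s$), and no classification of their maximal-independent-set counts exists; this is the same obstruction the paper records behind Conjecture~\ref{conj5}, where even Lev's structural result is said to be insufficient without new ideas. Note also one technical point you gloss over: Lemma~\ref{extension link graph} is one-directional (maximal extensions give maximal independent sets, not conversely), so the claimed ``correspondence up to a $2^{o(n)}$ factor'' needs the kind of double-counting care the paper exercises in Claims~\ref{mdsfs generated by B}--\ref{mdsfs generated by both (A,{0,s}) and (A',{0,s'})}. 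In short, your proposal is an accurate research plan that reproduces the paper's partial results and correctly locates the open difficulty, but it is not a proof of the conjecture.
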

In~\cite{HT} we  established the $G=\mathbb Z_2^k$ case of Conjecture~\ref{conj2}. In fact, we showed that $\fm (\mathbb Z_2^k)=\fms(\mathbb Z_2^k)$, and so the result follows by Theorem~\ref{thm23}.

The next construction shows that, asymptotically, the bound on $f^\star_{\max}(G)$ in Conjecture~\ref{conj2} cannot be lowered in the case of type \rom{1}$(p)$ abelian groups for $p \geq 5$.
In Section~\ref{sec:mainproof}, we give  constructions that show the bound in  Conjecture~\ref{conj2} cannot be lowered in the case of even order groups ($p=2$); see Proposition~\ref{fms even groups2c}.

\begin{prop}\label{prop1}
Suppose $p\geq 5$ and $G$ is a type I(p) abelian group of order $n$. Then $\fms(G)\geq 2^{\mu (G)/2}$.
\end{prop}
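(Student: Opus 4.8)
The plan is to exhibit a family of $2^{\mu(G)/2}$ distinct sum-free subsets of $G$ that are pairwise \emph{incompatible} — meaning the union of any two of them contains a distinct Schur triple — and then extend each member of the family to a maximal distinct sum-free set. Since two incompatible distinct sum-free sets can never lie in a common distinct sum-free set, the resulting maximal sets are pairwise distinct, whence $\fms(G)\ge 2^{\mu(G)/2}$. Thus everything reduces to constructing the incompatible family.

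First I would record the relevant structure of $G$. Because $p\ge 5$, the order $n$ is odd: otherwise $2\equiv 2\pmod 3$ would divide $n$ and $G$ would be type I($2$). In particular $2$ is invertible modulo the order of every element, so $x\mapsto 2x$ is an automorphism of $G$ and $x\mapsto -x$ has no nonzero fixed point. Writing $p=3k+2$, fix a surjective homomorphism $\psi\colon G\to\Z_p$ with kernel $H$, so $|H|=n/p$, and set $M:=\psi^{-1}(\{k+1,\dots,2k+1\})$. By Theorem~\ref{thmcharacter}, $M$ is a maximum sum-free subset of $G$, hence distinct sum-free, with $|M|=\mu(G)=(k+1)\,n/p$. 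Since $p$ is prime and $p\ge 5$, the integer $k$ is odd, so $\mu(G)$ is even and the fixed-point-free involution $x\mapsto -x$ partitions $M$ into exactly $\mu(G)/2$ pairs $\{x,-x\}$.

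The family would then be built by making one binary choice per pair. For each pair I attach a small \emph{gadget}: two local configurations $C^{0},C^{1}$ of group elements together with a fixed `base', arranged so that (i) replacing the pair by either $C^{0}$ or $C^{1}$ keeps the whole set distinct sum-free, while (ii) the two configurations are mutually incompatible, the witnessing distinct Schur triple being supplied by the base. Running independently over all $\mu(G)/2$ pairs then produces $2^{\mu(G)/2}$ sets, any two of which differ in some pair and are therefore incompatible. The gadget is where the distinct (rather than ordinary) sum-free hypothesis is used decisively: permitting the non-distinct triples $x+x=2x$ gives room to involve elements just outside $M$, and the target $2^{\mu(G)/2}$ forces each gadget's internal conflict pattern to be a perfect matching (two surviving configurations) in place of the path pattern (three configurations) that governs the $\Z_5^k$ construction of Section~\ref{sec2}. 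I would model the gadget on that construction, using translation by a fixed element to generate the either/or relations, but redesign the base and the configurations so that exactly two mutually incompatible configurations remain per pair.

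The step I expect to be the main obstacle is proving that the choices are genuinely independent: one must rule out \emph{cross-gadget} distinct Schur triples — showing that, for every global choice, the assembled set has no distinct Schur triple beyond the intended within-gadget ones — and simultaneously confirm that differing choices always yield an incompatibility. A secondary difficulty is the degenerate case $H=\{0\}$, i.e.\ $G=\Z_p$, where there is no room to translate within a fibre; here I would instead realise the either/or relations directly in $\Z_p$, exploiting the doubling automorphism $x\mapsto 2x$ and the permissibility of the non-distinct triples it creates, and verify the incompatible family by a direct check. Once global consistency and incompatibility are established, extending each set to a maximal distinct sum-free set and counting gives $\fms(G)\ge 2^{\mu(G)/2}$.
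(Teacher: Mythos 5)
Your proposal correctly sets up the counting frame (pairwise incompatible distinct sum-free sets extend to pairwise different maximal distinct sum-free sets) and the structural facts (odd order, fixed-point-free negation, $M=-M$ splitting into $\mu(G)/2$ pairs), but it stops exactly where the proof has to happen: the gadget realizing your conditions (i) and (ii) is never exhibited. Saying you will ``redesign the base and the configurations so that exactly two mutually incompatible configurations remain per pair,'' and flagging the absence of cross-gadget distinct Schur triples as ``the main obstacle,'' is an admission that both halves of the proposition --- that each of the $2^{\mu(G)/2}$ sets is distinct sum-free, and that any two of them are incompatible --- remain unproved. As written, this is a plan for a proof, not a proof.

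The missing construction is much simpler than the translation-based gadget you aim to adapt from the $\Z_5^k$ construction in Section~\ref{sec2}, and it needs no case analysis at all (this is essentially what the paper does). Take the base to be $\{0\}$, and for each pair $\{x,-x\}$ in $M$ let the two configurations be just $\{x\}$ and $\{-x\}$: consider all sets $S=\{0\}\cup\{a_x\}$ where $a_x$ is one chosen element from each pair. Every nonzero element of $S$ lies in the sum-free set $M$, so any Schur triple in $S$ must involve $0$; such a triple is either $0+y=y$, which is not a \emph{distinct} Schur triple, or $y+(-y)=0$, which cannot occur because $S$ contains exactly one element of each pair. Hence each $S$ is distinct sum-free. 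If $S\neq S'$, then for some pair $S$ contains $x$ while $S'$ contains $-x$, so $S\cup S'\supseteq\{0,x,-x\}$ contains a distinct Schur triple, giving the incompatibility. Note this is where the distinct hypothesis enters decisively --- one may adjoin $0$ harmlessly --- rather than through the triples $x+x=2x$ you point to; and since the construction works verbatim when $G=\Z_p$, the degenerate case $H=\{0\}$ that you single out requires no separate treatment.
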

\begin{proof}
    Let $B$ be a sum-free subset of $G$ (so in particular a distinct sum-free set) of size $\mu(G)=\frac{p+1}{3p}n$  and such that $B=-B$,\footnote{Actually, Lemma~\ref{large sfs groups I(p)} implies that every maximum size sum-free subset $B$ of $G$ satisfies $B=-B$.} e.g.,
    $$B:=\bigcup_{k=0}^{(p-2)/3}((3k+1)\cdot g+H),$$
    with $H$ a subgroup of $G$ of order $|G|/p$, and $g\not\in H$.
   The map $\sigma:x\mapsto -x$ is an involution of $B$, and since $p\geq5$, it has no fixed point. Then, there are $|B|/2$ orbits $\{x,-x\}$ of $\sigma$ with cardinality $2$. Choose one element $a_x$ in each  orbit $\{x, -x\}$, and consider the set
    $$\{0\}\cup\{a_x \ | \ \{x, -x\} \in B/\sigma\}.$$
    There are $2^{|B|/2}=2^{\mu(G)/2}$ ways to construct such a set, and notice each such set is distinct sum-free. Further, given two distinct sets $S,S'$ constructed in this way, $S$ and $S'$ must lie in different maximal distinct sum-free subsets of $G$. Indeed, $S \cup S'$  contains a Schur triple of the form $0, x,-x$. Thus, $\fms(G)\geq 2^{\mu (G)/2}$.
\end{proof}

Our next result  resolves Conjecture~\ref{conj2}   for most even order abelian groups $G$. 

\begin{thm}\label{fms even groups}
    There exists an absolute constant $C\in \mathbb N$ such that the following holds.
    Let $G$ be an  abelian group of even  order $n$, where $G=\Z_{2^{\alpha_1}}\oplus\ldots\oplus\Z_{2^{\alpha_r}}\oplus K,$
    with $r, \alpha_1,\dots, \alpha _r \in \mathbb N $, and $|K|$ odd. 
    If ${n}/{2^{r}}\geq C$, then 
    $$\fms(G)=2^{n/4+o(n)}.$$
\end{thm}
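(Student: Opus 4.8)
The plan is to combine the container method with the structure of large sum-free sets in type~$\rom{1}(2)$ groups. First note that since $n$ is even we have $2\mid n$ and $2\equiv 2\pmod 3$, so $G$ is type~$\rom{1}(2)$ and Theorem~\ref{thmcharacter} gives $\mu(G)=n/2$; thus the target $2^{n/4+o(n)}$ is precisely $2^{\mu(G)/2+o(n)}$, the value predicted by Conjecture~\ref{conj2}. The lower bound $\fms(G)\geq 2^{n/4-o(n)}$ will be supplied by the construction in Proposition~\ref{fms even groups2c}, so the substance of the proof is the matching upper bound $\fms(G)\leq 2^{n/4+o(n)}$.

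For the upper bound I would work with the $3$-uniform hypergraph $\mathcal H$ on vertex set $G$ whose edges are the distinct Schur triples, so that distinct sum-free sets are exactly the independent sets of $\mathcal H$. Applying the hypergraph container lemma in the by-now standard way (as in~\cite{BLST2, HT, ls}) yields a family of at most $2^{o(n)}$ containers such that every distinct sum-free set lies in one of them and each container spans few Schur triples. Feeding this into Green's removal lemma~\cite{G-R} together with the classification of maximum-size sum-free subsets of type~$\rom{1}(2)$ groups (Lemma~\ref{large sfs groups I(p)}) lets me assume that every container relevant to the count lies within $o(n)$ elements of a set $T:=G\setminus H$ for some index-$2$ subgroup $H\leq G$. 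There are $2^{r}-1\leq n$ such subgroups, contributing only a $2^{o(n)}$ factor; containers that are small, or far from every such $T$, can be shown to contribute at most $2^{n/4+o(n)}$ in total by exploiting maximality.

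Fix then a container $C$ within $o(n)$ of $T=G\setminus H$, and let $S\subseteq C$ be a maximal distinct sum-free set. Since $|C\cap H|=o(n)$ there are only $2^{o(n)}$ possibilities for $S\cap H$, and if $S\cap H=\emptyset$ then $S\subseteq T$ and maximality forces $S=T$, a single set. So I may fix a nonempty choice of $S\cap H$ and bound the number of possibilities for $S\cap T$. Picking any $h\in S\cap H$, the distinct Schur triples $t+(h-t)=h$ with $t\in T$ show that $S\cap T$ is an independent set in the graph $\Gamma_h$ on $T$ whose edges are the reflection pairs $\{t,h-t\}$ together with the translation pairs $\{t,t+h\}$ coming from the triples $h+t=t+h$. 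The reflection $t\mapsto h-t$ is an involution whose fixed points solve $2t=h$, of which there are at most $|G[2]|=2^{r}$. The goal is then to show $\Gamma_h$ has at most $2^{|T|/2+o(n)}=2^{n/4+o(n)}$ maximal independent sets; summing over the $2^{o(n)}$ choices of $S\cap H$ and the at most $n$ subgroups $H$ keeps the total at $2^{n/4+o(n)}$.

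The main obstacle is exactly this last count: a priori the reflection matching alone admits up to $3^{n/4}$ independent sets, so one must argue that maximality, combined with the translation edges, forces essentially one choice per reflection pair. I would decompose $T$ into orbits of the dihedral action generated by $t\mapsto t+h$ and $t\mapsto h-t$; each orbit induces either a prism $C_d\,\square\,K_2$ (when the reflection swaps two translation cycles) or a single translation cycle carrying reflection chords, and on each such component the number of \emph{maximal} independent sets grows at rate at most $\sqrt 2$ per vertex, so their product over all components is at most $2^{|T|/2+o(n)}$. Verifying this uniform $\sqrt 2$ growth rate for every component, and absorbing the fixed points of the reflection, is the delicate technical heart of the argument. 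This is also where the hypothesis $n/2^{r}\geq C$ enters: forcing $2G$ to have index at least $C$ lets me select $h\in H\setminus 2G$ for which the reflection is fixed-point-free and the induced components are generic enough both to support the $2^{n/4-o(n)}$ construction and to admit the matching upper bound, and it excludes the near-Boolean groups (such as $\Z_2^{k}$) handled separately by Theorem~\ref{thm23}. The construction of Proposition~\ref{fms even groups2c} is designed precisely so that each reflection pair carries a genuine binary choice respecting every Schur triple, certifying that the exponent $n/4$ cannot be improved.
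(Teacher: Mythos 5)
Your overall skeleton --- containers for distinct sum-free sets, the Green--Ruzsa structure result (Lemma~\ref{large sfs groups I(p)}) to place large containers inside a coset $T=G\setminus H$ of an index-$2$ subgroup, link graphs on $T$, and Proposition~\ref{fms even groups2c} for the lower bound --- is the same as the paper's. But there is a genuine gap at the decisive step. You fix the trace $S\cap H$, pick a \emph{single} element $h\in S\cap H$, and propose to bound the number of possibilities for $S\cap T$ by $\mis(\Gamma_h)$, where $\Gamma_h=L^{\star}_{\{h\}}[T]$. This reduction is invalid: maximality of $S$ in $G$ only makes $S\cap T$ a maximal independent set in the link graph of the \emph{whole} trace, $L^{\star}_{S\cap H}[T]$ (this is exactly Lemma~\ref{extension link graph}), and while independence passes down to the subgraph $\Gamma_h\subseteq L^{\star}_{S\cap H}[T]$, maximality does not --- a vertex $t$ may be unblocked in $\Gamma_h$ yet blocked by a Schur triple involving some other $h'\in S\cap H$. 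So the sets $S\cap T$ you must count are merely \emph{independent} sets of $\Gamma_h$, of which there can be far more than $2^{n/4+o(n)}$: already for $h=0$ the graph $\Gamma_0$ is a perfect matching on $\approx n/2$ vertices, with $2^{n/4}$ maximal but $3^{n/4}$ arbitrary independent sets. Your own remark about the $3^{n/4}$ threat identifies the problem, but your proposed resolution (analysing the dihedral components of the single-$h$ graph) cannot repair it, because the extra constraints that cut $3^{n/4}$ down to $2^{n/4}$ live in the edges generated by the \emph{other} elements of $S\cap H$, which your graph discards.

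The paper closes exactly this gap by bounding $\mis\bigl(L^{\star}_{S'}[T]\bigr)\leq 2^{n/4}$ \emph{uniformly over every} distinct sum-free $S'\subseteq H$, via a case analysis on the size and structure of $S'$ (its types 2--6): Lemma~\ref{dense and regular graphs} for $|S'|>87^2$, the stability Lemma~\ref{stability} for $2\leq |S'|\leq 87^2$ with $S'\neq\{0,s\}$, Lemma~\ref{almosttrianglefree} together with a triangle-hitting set for singletons, and exact computations (matchings, unions of $K_4$'s and $K_2$'s) for the extremal traces $S'=\{0\}$ and $S'=\{0,s\}$ with $s$ of order $2$. That case analysis is the technical heart of the proof and is precisely what your single-$h$ shortcut attempts to bypass. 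Two smaller points: the hypothesis $n/2^{r}\geq C$ is used to make the $2$-torsion (via Fact~\ref{solutions 2x=g}, at most $2^{r}$ solutions of $2x=g$) negligible against $n$ in all these estimates --- note it forces $2G$ to have \emph{order}, not index, at least $C$ --- and you do not get to ``select'' $h\in H\setminus 2G$: the element $h$ is dictated by the maximal set $S$ being counted, so every possible trace must be handled.
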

\begin{remarks} We now make a number of remarks concerning Theorem~\ref{fms even groups}.
\begin{itemize}
    
\item  Note that $r$ may depend on $n$ in the statement of Theorem~\ref{fms even groups}.

 \item  In Section~\ref{sec:mainproof}, we will see that one can certainly take $C:=10^{10}$.
 
\item  The condition that $n/2^{r} \geq C$  is analogous to a condition in~\cite[Theorem~3.2]{ls}, and is a very mild restriction. Indeed,  any even order abelian group $G$ not covered by Theorem~\ref{fms even groups} must be of the form 
$G=\mathbb Z_2 ^t \oplus H$ for some abelian group $H$ with $|H|<C^2$.

\item  Theorem~\ref{fms even groups} tells us that almost all  abelian groups $G$ of even order $n$ satisfy  $\fms(G)=2^{\mu(G)/2+o(n)}$. This is in contrast to the aforementioned result of Liu and Sharifzadeh~\cite[Theorem~3.2]{ls}, which demonstrates that almost all even order abelian groups $G$ satisfy $f_{\max}(G) \ll 2^{\mu (G)/2}$.

\end{itemize}
\end{remarks}

In Section~\ref{sec:mainproof} we prove -- via Theorems~\ref{fms even groups2} and~\ref{fms even groups2b} -- a sharp version of Theorem~\ref{fms even groups} that determines the term in front of $2^{n/4}$ rather than having the $o(n)$ term in the exponent.

Whilst Theorem~\ref{fms even groups}  makes substantial progress on the $\fm^\star(G)$ problem for even order abelian groups, the picture is still far from clear in general. As a next step,
it would be interesting to establish whether the analogue of Question~\ref{quesX} holds for $\fm^\star(G)$. In particular, note that by adapting an argument of  Liu and Sharifzadeh~\cite[Proposition 1.5]{ls}, one can deduce that 
$\fms(G)=3^{\mu(G)/3+o(n)}$
for all type \rom{2} groups $G$ whose order $n$ is divisible by $9$.

{\noindent \bf Additional note.} Since we submitted this paper, Balogh, Garcia, Liu and Yang~\cite{bgly}
have disproved Conjecture~\ref{conj2}. Indeed, they have shown that if 
$G:=\Z ^k_2 \oplus \Z_3$, then $\fms (G)=3^{(1+o(1)) \mu(G)/3}$. Further, for every 
prime $p \geq 23$ with $p\equiv 2 \pmod{3}$, they have shown that there are type \rom{1}$(p)$ groups $G$ with $\fms(G)$ exponentially larger than $2^{\mu(G)/2}$.  On the other hand, they have proven that Conjecture~\ref{conj2} is true for every even order group that is not isomorphic to $\Z ^k_2 \oplus \Z_3$; 
see~\cite[Theorem 1.7]{bgly}.
In light of this, it would be interesting to determine if Conjecture~\ref{conj2}
is true for `most' type $I$ abelian groups.

\smallskip

The rest of the paper is organised as follows. In the next section we introduce various  results and concepts that are useful for the proof of Theorem~\ref{fms even groups}, including the notion of a \emph{link graph}.
 In Section~\ref{sec:mainproof}  we prove (a strengthening of) Theorem~\ref{fms even groups} and provide  extremal examples for Conjecture~\ref{conj2} for all even order abelian groups.
In Section~\ref{sec:conj} we highlight a problem about maximal independent sets in graphs that we came across when considering Conjecture~\ref{conj5}.

\section{Useful results for Theorem~\ref{fms even groups}}\label{sec:usefulres}

\subsection{Tools for abelian groups} The following is a distinct sum-free subset analogue of a container result of Green and Rusza~\cite[Proposition~2.1]{GR-g}, and is central to our approach.

\begin{lemma}\label{distinct containers}
Given any $\delta>0$ there exists an $n_0 \in \mathbb N$ such that the following holds. If $G$ is a finite abelian group of order $n\geq n_0$, then there is a family $\mathcal{F}$ of subsets of $G$ with the following properties.
\begin{enumerate}[label = (\roman*),itemsep=0pt]
    \item Every $F\in\mathcal{F}$ has at most $\delta n^2$ distinct Schur triples.
    \item If $S\subseteq G$ is distinct sum-free, then $S$ is contained in some $F\in\mathcal{F}$.
    \item $|\mathcal{F}|\leq 2^{\delta n}$.
    \item Every $F\in \mathcal F$ is of the form $F= A \cup B$ where $A$ is sum-free and $|B|\leq \delta n$. In particular, $|F| \leq \mu (G)+\delta n$.
\end{enumerate} \qed
\end{lemma}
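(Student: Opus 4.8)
The plan is to apply the hypergraph container method to the $3$-uniform hypergraph $\mathcal H$ on vertex set $G$ whose edges are the (unordered) distinct Schur triples $\{x,y,z\}$ with $x+y=z$, and then to upgrade the resulting containers to the form required in~(iv) by a removal-lemma argument. The point that makes this tractable is that the distinct sum-free subsets of $G$ are exactly the independent sets of $\mathcal H$, and that $\mathcal H$ behaves like the hypergraph of \emph{all} Schur triples used by Green and Ruzsa: each fixed $x$ lies in $\Theta(n)$ distinct Schur triples, and each fixed pair $\{x,y\}$ lies in at most three of them (the third element being one of $x+y$, $x-y$ or $y-x$). Hence $e(\mathcal H)=\Theta(n^2)$, the average degree is $d=\Theta(n)$, and the maximum codegree is $O(1)$.

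First I would feed these degree bounds into a standard form of the container theorem (for instance~\cite{bms} or~\cite{saxton}); sum-free sets are the motivating example there, so only the edge set needs changing. With $k=3$ and $d=\Theta(n)$ the relevant parameter is $\tau\asymp d^{-1/2}\asymp n^{-1/2}$, so the theorem produces a family $\mathcal F$ of containers with $\log_2|\mathcal F|=O(\sqrt n\log n)=o(n)$, giving~(iii) once $n_0$ is large; every independent set of $\mathcal H$, and hence every distinct sum-free set, lies in some $F\in\mathcal F$, giving~(ii); and each $F$ spans at most $\delta' n^2$ edges of $\mathcal H$, i.e.\ at most $\delta' n^2$ distinct Schur triples, where $\delta'$ can be taken as small as we like. (Alternatively, since $\mathcal H$ differs from the full Schur-triple hypergraph in only $O(n)$ edges, one could instead adapt the proof of~\cite[Proposition~2.1]{GR-g} essentially verbatim.)

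It remains to establish~(iv), and the key observation is that a set with few distinct Schur triples also has few Schur triples in the ordinary sense. Indeed, the number of \emph{degenerate} Schur triples $(x,y,z)$ with $x+y=z$ and two coordinates equal is at most $3n$ (they have the shape $(x,x,2x)$, $(x,0,x)$ or $(0,y,y)$), so any container $F$ contains at most $\delta' n^2+3n\leq 2\delta' n^2$ Schur triples in total. I would then apply Green's arithmetic removal lemma~\cite{G-R}: choosing $\delta'$ small enough in terms of $\delta$, there is a set $B\subseteq F$ with $|B|\le\delta n$ such that $A:=F\setminus B$ is sum-free. This yields the decomposition $F=A\cup B$ of~(iv), and since $A$ is sum-free we get $|A|\le\mu(G)$ and hence $|F|\le\mu(G)+\delta n$. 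Finally,~(i) follows from this decomposition for free: as $A$ is sum-free it contains no distinct Schur triple, so every distinct Schur triple in $F$ meets $B$, and since each vertex lies in $O(n)$ distinct Schur triples there are at most $O(\delta n^2)$ of them.

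The main obstacle, and the only place where real care is needed, is the parameter bookkeeping: one must fix the constants in the right order so that a single $\delta$ controls all four conclusions at once. Concretely, I would start from the target $\delta$, use~(i) and~(iv) to determine how small $|B|$ (and hence the total Schur-triple count) must be, invoke the removal lemma to convert that into an admissible bound, and only then choose the container parameter $\delta'$ and the resulting $n_0$. Everything else is routine, the conceptual content being entirely contained in the observation that distinct and ordinary Schur triples differ by only $O(n)$ triples, so that the container machinery and the removal lemma for ordinary sum-free sets transfer with negligible loss.
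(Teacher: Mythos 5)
Your proposal is correct and follows essentially the same route as the paper: the authors prove this lemma by simply combining the hypergraph container theorem of Balogh--Morris--Samotij \cite[Theorem~2.2]{bms} (applied, as you do, to the $3$-uniform hypergraph of distinct Schur triples, whose degree and codegree bounds are exactly as you compute) with Green's arithmetic removal lemma \cite[Theorem~1.5]{G-R} to obtain the decomposition in (iv). Your additional observation that degenerate Schur triples contribute only $O(n)$ triples, so that the removal lemma for ordinary Schur triples applies to the containers, is precisely the (unstated) bridge the paper's citation relies on.
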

Note that Lemma~\ref{distinct containers} follows immediately from 
\cite[Theorem 1.5]{G-R} and, e.g., \cite[Theorem~2.2]{bms}.
We refer to the sets in $\mathcal{F}$ as \textit{containers}. 
With Lemma~\ref{distinct containers} at hand, we see that in order to count the number of maximal distinct sum-free sets in an abelian group $G$, it suffices to count the number of maximal distinct sum-free sets in each container. To do so, the next  result provides structure on the sum-free sets in type I$(p)$ groups.

\begin{lemma}\cite[Lemma 5.6]{GR-g}\label{large sfs groups I(p)}
    Suppose that $G$ is a type I($p$) group  and write $p=3k+2$. Let $A\subseteq G$ be sum-free, and suppose that $|A|>\left(\frac{1}{3}+\frac{1}{3(p+1)}\right)n$. Then we may find a homomorphism $\psi:G\to\Z_p$ such that $A$ is contained in $\psi^{-1}(\{k+1,\ldots,2k+1\})$.
\end{lemma}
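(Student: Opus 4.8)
The plan is to reduce the statement to the classical description of maximum-size sum-free sets in $\Z_p$, carrying out the reduction via Kneser's theorem. Throughout, recall that $A$ being sum-free means precisely that $A \cap (A+A) = \emptyset$, where $A + A := \{a + a' : a, a' \in A\}$ (allowing $a = a'$); in particular $|A + A| \le n - |A|$.

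First I would control the period of the sumset. Let $H := \{g \in G : g + (A+A) = A+A\}$ be the stabiliser of $A + A$, so that $A + A$ is a union of cosets of $H$. Kneser's theorem gives $|A + A| \ge 2|A| - |H|$, and combining this with $|A+A| \le n - |A|$ yields $|H| \ge 3|A| - n$. The hypothesis $|A| > \left(\tfrac13 + \tfrac{1}{3(p+1)}\right)n$ then forces $|H| > n/(p+1)$, so the index $m := [G:H]$ satisfies $m \le p$.

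Next I would pass to the quotient $\bar G := G/H$ and write $\bar A$ for the image of $A$. Since $A+A$ is $H$-periodic and disjoint from $A$, no coset of $H$ meeting $A$ can be contained in $A + A$; unwinding this shows $\bar A \cap (\bar A + \bar A) = \emptyset$, i.e. $\bar A$ is sum-free in $\bar G$. As each coset has $|H|$ elements, $|\bar A| \ge |A|/|H| > m/3$ (using $|A| > n/3$), so $\mu(\bar G) > m/3$. By Theorem~\ref{thmcharacter} this is only possible if $\bar G$ is type I, say type I$(q)$; then $q \mid m \mid n$, and the minimality of $p$ gives $p \le q \le m \le p$. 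Hence $m = p$ and $\bar G \cong \Z_p$.

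Finally I would invoke the inverse theory in $\Z_p$. Writing $p = 3k+2$, the bound $|\bar A| > \left(\tfrac13 + \tfrac{1}{3(p+1)}\right)p > k + \tfrac23$ forces $|\bar A| \ge k+1 = \mu(\Z_p)$, so $\bar A$ is a maximum-size sum-free subset of $\Z_p$; in particular $|\bar A + \bar A| \le p - |\bar A| = 2|\bar A| - 1$, which is equality in the Cauchy--Davenport inequality. By Vosper's theorem $\bar A$ is an arithmetic progression, say with common difference $d \ne 0$, and dilating by $d^{-1}$ turns it into a genuine interval of length $k+1$; a direct computation shows the unique sum-free interval of this length in $\Z_p$ is $\{k+1, \ldots, 2k+1\}$. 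Taking $\psi$ to be the composition of the quotient map $G \to \bar G \cong \Z_p$ with multiplication by $d^{-1}$ then gives $A \subseteq \psi^{-1}(\{k+1, \ldots, 2k+1\})$, as required. The main obstacle is this last step: everything hinges on the inverse statement for $\Z_p$ (Vosper's theorem) together with the uniqueness, up to dilation, of the extremal sum-free interval. The degenerate case $p = 2$ (so $k = 0$ and $|\bar A| = 1$) should be treated separately, since there $\bar A = \{1\}$ forces the conclusion directly without any inverse theorem; one should also double-check that the threshold $\tfrac13 + \tfrac{1}{3(p+1)}$ is exactly what is needed to push the index down to $m \le p$ in the Kneser step.
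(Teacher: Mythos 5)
The paper does not prove this lemma at all: it is imported verbatim from Green and Ruzsa~\cite{GR-g} (their Lemma~5.6), so there is no in-paper argument to compare yours against. Your reconstruction is correct, and it is in essence the original Green--Ruzsa route: Kneser's theorem applied to $A+A$, whose stabiliser $H$ satisfies $|H|\geq 3|A|-n>n/(p+1)$ and hence $[G:H]\leq p$; passage to the quotient $\bar G=G/H$, where $\bar A$ stays sum-free because $A+A$ is $H$-periodic and disjoint from $A$; the identification $\bar G\cong\Z_p$ forced by $\mu(\bar G)>[G:H]/3$ together with the minimality of $p$ (the chain $p\leq q\leq [G:H]\leq p$ checks out); and finally the inverse step in $\Z_p$ via equality in Cauchy--Davenport (here $p-|\bar A|=2|\bar A|-1$ is the right arithmetic), Vosper's theorem, and the computation that $\{k+1,\dots,2k+1\}$ is the unique sum-free interval of length $k+1$. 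You are also right to isolate $p=2$, where Vosper is unavailable ($|\bar A|=1$) but the conclusion is immediate. One remark on your appeal to Theorem~\ref{thmcharacter}: within this paper that is legitimate, since the theorem is stated as known; but note that to rule out types II and III for $\bar G$ you only need that a group with no prime factor $\equiv 2\pmod 3$ has $\mu\leq n/3$. That weaker bound is classical (Diananda--Yap~\cite{yan}) and also follows from your own Kneser step by induction on the group order, so your argument is not circular even relative to~\cite{GR-g}, where the harder type III value of $\mu$ is established.
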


In the proof of Theorem~\ref{fms even groups}, 
we will use  the following easy fact about the number of solutions of the equation $2x=g$ in an even order abelian group.

\begin{fact}\label{solutions 2x=g}
    Let 
$G:=\Z_{2^{\alpha_1}}\oplus\Z_{2^{\alpha_2}}\oplus\ldots\oplus\Z_{2^{\alpha_r}}\oplus K $ where $K$ is an abelian group of odd order and $\alpha _i \in \mathbb N$ for each $ i \in [r]$. Fix $g\in G$. Then there are at most $2^r$ solutions in $G$ to the equation $2x=g$.\qed
\end{fact}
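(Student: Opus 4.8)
The plan is to reduce counting solutions of $2x=g$ to counting the $2$-torsion of $G$. First I would introduce the doubling homomorphism $\phi\colon G\to G$ given by $\phi(x)=2x$. If $2x=g$ has no solution then the count is $0$ and there is nothing to prove. Otherwise, fix one solution $x_0$; then $x$ is a solution if and only if $2(x-x_0)=0$, i.e. $x-x_0\in\ker\phi$. Hence the solution set is exactly the coset $x_0+\ker\phi$, and so it has size precisely $|\ker\phi|$, independently of $g$.

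It then remains to bound $|\ker\phi|$, and in fact I would show $|\ker\phi|=2^r$. The kernel $\ker\phi=\{x\in G:2x=0\}$ is precisely the subgroup of elements whose order divides $2$. Since doubling acts coordinatewise on the direct sum $G=\Z_{2^{\alpha_1}}\oplus\cdots\oplus\Z_{2^{\alpha_r}}\oplus K$, an element lies in $\ker\phi$ if and only if each of its coordinates does. In each factor $\Z_{2^{\alpha_i}}$ the elements killed by doubling are exactly $0$ and $2^{\alpha_i-1}$, contributing a factor of $2$. In the odd-order part $K$, since $\gcd(2,|K|)=1$ the map $y\mapsto 2y$ is a bijection, so the only element killed is $0$, contributing a factor of $1$. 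Multiplying over all factors yields $|\ker\phi|=2^r$.

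Combining the two steps, the number of solutions to $2x=g$ is either $0$ or exactly $2^r$, so in particular it is at most $2^r$, as claimed. There is no genuine obstacle here: the only point that needs a word of care is the coprimality observation showing that doubling is injective on $K$, which is what forces the odd-order part to contribute nothing to the $2$-torsion.
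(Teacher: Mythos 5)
Your proof is correct, and it is precisely the standard argument the paper leaves implicit (the fact is stated with its proof omitted as routine): solutions to $2x=g$ form a coset of the $2$-torsion subgroup $\ker(x\mapsto 2x)$, which has order exactly $2^r$ since each $\Z_{2^{\alpha_i}}$ contributes the two elements $0,2^{\alpha_i-1}$ and the odd-order part $K$ contributes only $0$. Nothing is missing.
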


We end this subsection with a technical lemma that will help us make a sharp count on the number of maximal distinct sum-free subsets in even order abelian groups with few  elements of order $2$. To prove it, we will use the following well-known correspondence (e.g., it is an immediate corollary of the first proposition in~\cite[Section 2.2]{mallet-burgues}).

\begin{lemma}\label{corr order index}
    Let $G$ be a finite abelian group and $k\in\N$. The  number of subgroups of $G$ of order $k$ equals the number of subgroups of $G$ with index $k$. \qed
\end{lemma}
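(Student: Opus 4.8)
The plan is to establish this classical fact via Pontryagin duality. Let $\widehat{G}:=\mathrm{Hom}(G,\mathbb{C}^{\times})$ be the character group of $G$. It is a standard fact that $\widehat{G}\cong G$ for any finite abelian group $G$, so in particular the number of subgroups of $\widehat{G}$ of any given order $k$ equals the number of subgroups of $G$ of order $k$. It therefore suffices to exhibit a bijection between the subgroups of $G$ of index $k$ and the subgroups of $\widehat{G}$ of order $k$.

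To this end, I would use the annihilator map. For a subgroup $H\leq G$, set
$$H^{\perp}:=\{\chi\in\widehat{G}:\chi(h)=1\text{ for all }h\in H\},$$
which is a subgroup of $\widehat{G}$. The key properties I would verify are: (a) restriction of characters induces an isomorphism $H^{\perp}\cong\widehat{G/H}$, whence $|H^{\perp}|=|G/H|=[G:H]$; and (b) the annihilator map is a bijection from the set of subgroups of $G$ onto the set of subgroups of $\widehat{G}$. Property (a) gives that $H^{\perp}$ has order $k$ precisely when $H$ has index $k$, so together with (b) this yields the required bijection and completes the proof.

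For property (b), I would argue via reflexivity: identifying $G$ with its double dual $\widehat{\widehat{G}}$, one checks that $(H^{\perp})^{\perp}=H$ for every subgroup $H\leq G$ (the inclusion $H\subseteq(H^{\perp})^{\perp}$ is immediate, and equality follows by comparing orders using (a) twice). This shows the annihilator map is injective; applying the same reasoning to subgroups of $\widehat{G}$ shows every subgroup of $\widehat{G}$ arises as some $H^{\perp}$, giving surjectivity.

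The main obstacle — though a mild one, since all the ingredients are standard — is assembling the duality correctly: verifying that $H^{\perp}\cong\widehat{G/H}$ and that the double annihilator recovers $H$, both of which ultimately rest on $G$ being finite abelian (so that characters separate points and $\widehat{G}\cong G$). An alternative, character-free route would invoke the structure theorem to write $G\cong\bigoplus_i \Z_{d_i}$ and construct the order-to-index correspondence on the subgroup lattice directly, but the duality argument is cleaner and makes the self-duality of the subgroup lattice transparent.
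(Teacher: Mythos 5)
Your proof is correct, but note that the paper itself does not prove this lemma: it is stated with a qed symbol and attributed to the first proposition of Section~2.2 of the Mallet-Burgues reference, so your argument is necessarily a different (self-contained) route. The Pontryagin-duality argument you give is the classical one and all its steps are sound: $|H^{\perp}|=[G:H]$, the double-annihilator identity $(H^{\perp})^{\perp}=H$ (proved, as you say, by the trivial inclusion plus the order count) gives both injectivity and, applied to subgroups of $\widehat{G}$, surjectivity of $H\mapsto H^{\perp}$, and the abstract isomorphism $\widehat{G}\cong G$ transfers the count of order-$k$ subgroups of $\widehat{G}$ back to $G$. One small wording correction: the isomorphism $H^{\perp}\cong\widehat{G/H}$ is induced by \emph{pulling back} characters along the quotient map $G\to G/H$ (inflation), not by restriction; restriction of characters to $H$ is instead what realises $\widehat{G}/H^{\perp}\cong\widehat{H}$ (its surjectivity uses divisibility of $\mathbb{C}^{\times}$). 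Either route yields $|H^{\perp}|=[G:H]$, so this does not affect correctness. What your approach buys is a complete proof exhibiting the full inclusion-reversing bijection of the subgroup lattice with itself, which is stronger than the bare counting statement the paper needs; what the paper's citation buys is brevity, delegating exactly this duality to the reference.
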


We also require the following fact about subgroups of maximal rank in some even order abelian groups.

\begin{fact}\label{nb order 2 elements}
     Given $\alpha_1\geq\ldots\geq\alpha_r\geq1$, let $G:=\Z_{2^{\alpha_1}}\oplus\ldots\oplus\Z_{2^{\alpha_r}}$ be a group of rank $r$. If $H\leq G$ is a subgroup of $G$, then $H$ has rank $r$ if and only if it contains every order 2 element of $G$.
\end{fact}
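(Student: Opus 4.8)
The plan is to translate the group-theoretic statement into a statement of linear algebra over the field $\mathbb{F}_2$ by focusing on the $2$-torsion subgroup. Write $G[2]:=\{x\in G: 2x=0\}$ for the subgroup of elements of order dividing $2$; its nonzero elements are exactly the order-$2$ elements of $G$, so the condition that $H$ contains every order-$2$ element of $G$ is equivalent to the inclusion $G[2]\subseteq H$. The crucial observation I would use is that, for any finite abelian $2$-group $K$, the rank of $K$ equals $\dim_{\mathbb{F}_2}(K[2])$. This follows at once from the structure theorem: in the primary decomposition $K\cong\Z_{2^{\beta_1}}\oplus\dots\oplus\Z_{2^{\beta_s}}$, each cyclic factor $\Z_{2^{\beta_i}}$ contains a unique element of order $2$, so $K[2]\cong\mathbb{F}_2^{s}$ and hence $\dim_{\mathbb{F}_2}(K[2])=s=\mathrm{rank}(K)$. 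Applying this to $G$ gives $\dim_{\mathbb{F}_2}(G[2])=r$.

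Next I would apply the same identity to the subgroup $H$, which is itself a finite abelian $2$-group. Here one observes that its $2$-torsion is $H[2]=\{x\in H:2x=0\}=H\cap G[2]$, an $\mathbb{F}_2$-subspace of $G[2]$. Hence $\mathrm{rank}(H)=\dim_{\mathbb{F}_2}(H\cap G[2])$. The statement now reduces to the elementary fact that a subspace $W$ of a finite-dimensional vector space $V$ satisfies $\dim W=\dim V$ if and only if $W=V$. Taking $W=H\cap G[2]$ and $V=G[2]$, we obtain that $\mathrm{rank}(H)=r$ if and only if $H\cap G[2]=G[2]$, i.e.\ if and only if $G[2]\subseteq H$.

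Combining the two steps closes both directions simultaneously: $H$ has rank $r$ precisely when $G[2]\subseteq H$, which by the first paragraph is precisely the condition that $H$ contains every order-$2$ element of $G$. I do not expect any genuine obstacle here; the only point requiring care is the reduction of the rank to the $\mathbb{F}_2$-dimension of the $2$-torsion, and in particular the identity $H[2]=H\cap G[2]$, which is what allows one to compare the ranks of $H$ and $G$ inside the common vector space $G[2]$. Everything else is the structure theorem for finite abelian $2$-groups together with a one-line dimension count.
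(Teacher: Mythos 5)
Your proof is correct and takes essentially the same route as the paper's: both apply the structure theorem to $H$ to identify its rank with the size of its $2$-torsion, and then compare this with the $2$-torsion of $G$. The only difference is cosmetic --- you phrase the comparison as an $\mathbb{F}_2$-dimension count (a subspace $W\subseteq V$ with $\dim W=\dim V$ equals $V$), whereas the paper counts order-$2$ elements directly ($2^{r'}-1=2^r-1$ forces $r'=r$).
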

\begin{proof}
    First note that the order 2 elements in $G$ are the non-zero elements of the set $\{0,2^{\alpha_1-1}\}\oplus\ldots\oplus\{0,2^{\alpha_r-1}\}$, so there are $2^r-1$ of them. Since $H\leq G$, there exist $\beta_1\geq\ldots\geq\beta_{r'}\geq1$ such that $H\cong \Z_{2^{\beta_1}}\oplus\ldots\oplus\Z_{2^{\beta_{r'}}}$, with $r'\leq r$ the rank of $H$. Thus, $H$ has $2^{r'}-1$ order 2 elements. So $H$ contains every order 2 element of $G$ if and only if $2^r-1=2^{r'}-1$, i.e., if and only if $r=r'$.
\end{proof}

The final lemma of this subsection counts the number of index 2 subgroups of a given rank in some even order abelian groups. Recall that given an abelian group $G$,   $2G$ denotes the subgroup of $G$ whose elements are of the form $2g$ for some $g \in G$.

\begin{lemma}\label{nb subgroups of index 2 with fixed rank}
Let $r_1$ and $r_2$ be non-negative integers such that $r:=r_1+r_2\in \mathbb N.$ 
    Let $G:=\Z_{2^{\alpha_1}}\oplus\ldots\oplus\Z_{2^{\alpha_{r_1}}}\oplus\Z_2^{r_2}$ with $\alpha_1\geq\ldots\geq\alpha_{r_1}\geq2$. Then there are $2^{r_1}-1$ subgroups of index 2 of $G$ with rank $r$, and $2^r-2^{r_1}$ subgroups of index 2 of $G$ with rank $r-1$. 
\end{lemma}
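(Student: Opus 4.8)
The plan is to count index-2 subgroups of $G$ by identifying them with their index-2 property via quotients, and then split according to whether or not they contain all the order-2 elements of $G$, invoking Fact~\ref{nb order 2 elements} to translate "contains every order-2 element" into a rank condition. First I would observe that every index-2 subgroup $H$ of $G$ arises as the kernel of a surjective homomorphism $\chi: G \to \Z_2$, and that two such homomorphisms have the same kernel precisely when they are equal (since $\Z_2$ has only the trivial automorphism). Hence the number of index-2 subgroups equals the number of surjections $G \to \Z_2$, which is the number of nonzero elements of $\mathrm{Hom}(G,\Z_2)$. Since $\mathrm{Hom}(G,\Z_2)\cong \Z_2^r$ (each cyclic factor $\Z_{2^{\alpha_i}}$ contributes one $\Z_2$ because $\alpha_i\geq 1$), there are exactly $2^r-1$ index-2 subgroups in total; this accounts for the claim that the two rank-classes partition all of them, since $(2^{r_1}-1)+(2^r-2^{r_1})=2^r-1$.

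Next I would determine the rank of the kernel $H=\ker\chi$. By Fact~\ref{nb order 2 elements} (applied to $G$, whose rank is $r$ since every $\alpha_i\geq 1$), $H$ has rank $r$ if and only if it contains all $2^r-1$ order-2 elements of $G$, and otherwise $H$ has rank at most $r-1$. Because $[G:H]=2$, the rank can drop by at most $1$, so in the remaining case $H$ has rank exactly $r-1$. Thus the count reduces to deciding, for each surjection $\chi:G\to\Z_2$, whether $\chi$ kills every order-2 element. The order-2 elements are the nonzero vectors in $\{0,2^{\alpha_1-1}\}\oplus\ldots\oplus\{0,2^{\alpha_{r_1}-1}\}\oplus\Z_2^{r_2}$, so I would evaluate $\chi$ on the standard order-2 generators $e_i$ of each factor.

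The decisive observation is the distinction between the first $r_1$ coordinates (where $\alpha_i\geq 2$) and the last $r_2$ coordinates (copies of $\Z_2$). Writing $\chi$ in coordinates, for a factor $\Z_{2^{\alpha_i}}$ with $\alpha_i\geq 2$, the image of its order-2 element $2^{\alpha_i-1}$ under $\chi$ is automatically $0$ in $\Z_2$, regardless of how $\chi$ acts on that factor, since $2^{\alpha_i-1}$ is an even multiple of the generator whenever $\alpha_i\geq 2$. By contrast, for a $\Z_2$-factor the order-2 element is the generator itself, on which $\chi$ may be nonzero. Hence $\chi$ kills every order-2 element if and only if $\chi$ vanishes on all $r_2$ of the $\Z_2$-factors, i.e. $\chi$ factors through the projection onto $\Z_{2^{\alpha_1}}\oplus\ldots\oplus\Z_{2^{\alpha_{r_1}}}$. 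The number of surjections $G\to\Z_2$ of this kind is the number of nonzero elements of $\mathrm{Hom}(\Z_{2^{\alpha_1}}\oplus\ldots\oplus\Z_{2^{\alpha_{r_1}}},\Z_2)\cong\Z_2^{r_1}$, namely $2^{r_1}-1$. These give the rank-$r$ subgroups, and the remaining $(2^r-1)-(2^{r_1}-1)=2^r-2^{r_1}$ surjections give the rank-$(r-1)$ subgroups, as claimed.

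I expect the main obstacle to be bookkeeping the order-2 element computation cleanly, specifically making rigorous the claim that $\chi(2^{\alpha_i-1})=0$ for $\alpha_i\geq 2$ but that the $\Z_2$-factors are genuinely unconstrained; this is where the hypothesis $\alpha_i\geq 2$ on the first $r_1$ factors is used and must be stated carefully. One minor subtlety is confirming that distinct nonzero homomorphisms yield distinct kernels (injectivity of the map $\chi\mapsto\ker\chi$ on nonzero homomorphisms into $\Z_2$), which holds because a nonzero $\chi:G\to\Z_2$ is surjective and its kernel determines it up to the unique nontrivial automorphism of $\Z_2$, which is the identity. Everything else is a direct count, so the argument should go through without requiring the correspondence of Lemma~\ref{corr order index} here, though one could alternatively phrase the total count $2^r-1$ through that lemma.
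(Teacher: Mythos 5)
Your proof is correct, but it takes a genuinely different route from the paper. The paper never looks at homomorphisms into $\Z_2$: it first invokes Lemma~\ref{corr order index} (the order/index duality) to get the total count $2^r-1$ of index-2 subgroups, and then constructs an explicit bijection $H\mapsto 2H$ between the rank-$r$ index-2 subgroups of $G$ and the index-2 subgroups of the doubled group $2G$, verifying well-definedness, injectivity and surjectivity by kernel arguments (with Fact~\ref{nb order 2 elements} used to absorb order-2 discrepancies); since $2G$ has rank $r_1$, a second application of Lemma~\ref{corr order index} gives $2^{r_1}-1$, and the rank-$(r-1)$ count follows by subtraction. You instead work on the dual side: index-2 subgroups correspond bijectively to the nonzero elements of $\mathrm{Hom}(G,\Z_2)\cong\Z_2^r$ (using that $\mathrm{Aut}(\Z_2)$ is trivial), and the rank dichotomy for $\ker\chi$ is decided -- again via Fact~\ref{nb order 2 elements} -- by whether $\chi$ annihilates every involution; your coordinate computation, where $\alpha_i\geq 2$ forces $\chi(2^{\alpha_i-1}e_i)=2^{\alpha_i-1}\chi(e_i)=0$ while the $\Z_2$-factors are unconstrained, shows this happens exactly when $\chi$ factors through the projection killing $\Z_2^{r_2}$, giving $2^{r_1}-1$ directly. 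Your approach buys brevity and self-containedness: it dispenses with Lemma~\ref{corr order index} and with the three-part verification that $H\mapsto 2H$ is a bijection, replacing structural subgroup-lattice arguments by a single character count. The paper's approach stays entirely inside the subgroup lattice, which fits its surrounding toolkit (Lemma~\ref{corr order index} is stated anyway and reused), and the doubling-map bijection is perhaps more transparent about \emph{which} subgroups have rank $r$, namely the preimages $\{g : 2g\in\tilde H\}$ of index-2 subgroups $\tilde H\leq 2G$. One shared minor point: both you and the paper assert without proof that an index-2 subgroup has rank $r$ or $r-1$; this is standard (intersect $H$ with the subgroup of involutions, which has index at most 2 there), so it is not a gap in either argument.
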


\begin{proof}
    Note that a subgroup of index 2 of $G$ has either rank $r$ or $r-1$. Let $\mathcal{I}_2(G)$ be the set of all the subgroups of $G$ with index 2. Let $\mathcal{R}(G)$ (resp.~$\mathcal{R}_{-1}(G)$) be the set of the subgroups of index 2 of $G$ with rank $r$ (resp.~$r-1$). Set $\rho(G):=|\mathcal{R}(G)|$ and $\rho_{-1}(G):=|\mathcal{R}_{-1}(G)|$. By Lemma \ref{corr order index}, $|\mathcal{I}_2(G)|$ is equal to the number of subgroups of order 2 of $G$ (which is the number of order 2 elements in $G$), namely $2^r-1$.
    Then, $\rho(G)+\rho_{-1}(G)=2^r-1$.

    We will now define a bijection $\varphi$ between $\mathcal{R}(G)$ and $\mathcal{I}_2(2G)$. 
    
    Let $H\in\mathcal{R}(G)$, and define $\varphi(H):=2H$. Let us prove that $\varphi(H)\in\mathcal{I}_2(2G)$. Consider the homomorphism $\pi:G\to 2G/2H$ defined by $\pi(g)=2g+2H$. Then $g\in\ker(\pi)$ if and only if $2g \in 2H$, which is true precisely if  (i) $g \in H$ or (ii) there exists $h\in H$ such that $g-h$ has order 2. Note though that in case (ii), $g-h \in H$ since $H$ has rank $r$ and so by Fact \ref{nb order 2 elements} it contains every element of $G$ of order 2; this then implies $g \in H$. So in fact $g\in\ker(\pi)$ if and only if $g \in H$ and thus
     $\ker(\pi)=H$.
By the fundamental theorem on homomorphisms, this implies that 
    $2G/2H$ is isomorphic to $G/H$, proving that $2H\in\mathcal{I}_2(2G)$. This proves that $\varphi$ is well-defined.

    Let $H,H'\in\mathcal{R}(G)$, and suppose that $\varphi(H)=\varphi(H')$, i.e., $2H=2H'$. Then for every $h\in H$, either $h \in H'$ or there exists $h'\in H'$ such that $h-h'$ has order 2. But since $H'\in\mathcal{R}(G)$, it contains every order 2 element of $G$ by Fact \ref{nb order 2 elements}; this means $h-h'\in H'$, so $h\in H'$. Thus, $H\subseteq H'$. Similarly $H'\subseteq H$, so $H=H'$, and $\varphi$ is injective.
    
    Conversely, let $\Tilde{H}\in\mathcal{I}_2(2G)$. Then $H:=\{g\in G \ | \ 2g\in\Tilde{H}\}$ is such that $\varphi(H)=\Tilde{H}$. Indeed, consider the homomorphism 
    $\Tilde{\pi}:G\to 2G/\Tilde{H}$ defined by  $\Tilde{\pi}(g)=2g+\Tilde{H}$. Then by definition $H=\ker(\Tilde{\pi})$, 
    so $H$ is an index 2 subgroup of $G$.
    Further, $H$ contains every  element in $G$ of order $2$, and so by Fact \ref{nb order 2 elements} must have rank $r$; that is, $H\in\mathcal{R}(G)$. This therefore shows $\varphi$ is surjective.

    This bijection $\varphi$  proves that $\rho(G)=|\mathcal{I}_2(2G)|$. Note that $2G$ has rank $r_1$, and so since $2G$ contains  $2^{r_1}-1$ order 2 elements, Lemma~\ref{corr order index} implies that  $|\mathcal{I}_2(2G)|=2^{r_1}-1$.
    Thus, $\rho(G)=2^{r_1}-1$, and since $\rho(G)+\rho_{-1}(G)=2^r-1$ we also have that $\rho_{-1}(G)=2^r-2^{r_1}$.
\end{proof}

\subsection{Maximal independent sets in graphs}

A key aspect of the approach we take is the translation of the problem of maximal sum-free sets in abelian groups into one about maximal independent sets in graphs. Therefore, we now introduce  some notation, and  state several results that bound the number of maximal independent sets in graphs.

Let $\Gamma=(V,E)$ be a graph. 
We define $v(\Gamma):=|V|$ and $e(\Gamma):=|E|$.
Given a vertex $x\in V$, we write $d(x,\Gamma)$ for the \textit{degree} of $x$ in $\Gamma$ (i.e., the number of edges in $G$ incident to $x$). We write $\delta(\Gamma)$ for the \textit{minimum degree} and $\Delta(\Gamma)$ for the \textit{maximum degree} of $\Gamma$. 
Given a subset $X \subseteq V$ we write $\Gamma \setminus X$ for the subgraph of $\Gamma$ induced by $V \setminus X$.

We write $K_m$ for the complete graph on $m$ vertices and $C_m$  for the cycle on $m$ vertices. Given graphs $\Gamma$ and $\Gamma'$ we write $\Gamma\square\Gamma'$ for the \textit{cartesian product graph}; so its vertex set is $V(\Gamma)\times V(\Gamma')$ and $(x,y)$ and $(x',y')$ are adjacent in $\Gamma\square\Gamma'$ if (i) $x=x'$ and $y$ and $y'$ are adjacent in $\Gamma'$ or (ii) $y=y'$ and $x$ and $x'$ are adjacent in $\Gamma$. 

We denote by $\mis(\Gamma)$ the number of maximal independent sets in $\Gamma$. Moon and Moser \cite{MM} proved the following bound which holds for any $n$-vertex graph $\Gamma$:
\begin{equation}\label{Moon-Moser}
\mis(\Gamma)\leq 3^{n/3}.
\end{equation}
Note the bound in (\ref{Moon-Moser}) is tight; consider a graph consisting of the disjoint union of triangles. However, Hujter and Tuza~\cite{HT2} improved this bound in the case of  triangle-free $n$-vertex graphs $\Gamma$:
\begin{equation}\label{Hujter-Tuza}
\mis(\Gamma)\leq 2^{n/2}.
\end{equation}
If $\Gamma$ is a perfect matching then we have equality in (\ref{Hujter-Tuza}). The following lemma improves on (\ref{Moon-Moser}) in the case of somewhat regular and dense graphs.

\begin{lemma}\cite[Equation (3)]{BLST}\label{dense and regular graphs}
Let $k\geq 1$ and let $\Gamma$ be a graph on $n$ vertices. Suppose that $\Delta(\Gamma)\leq k\delta(\Gamma)$ and set $b:=\sqrt{\delta(\Gamma)}$. Then
$$\mis(\Gamma)\leq \sum_{0\leq i\leq n/b}\binom{n}{i}\cdot3^{\left(\frac{k}{k+1}\right)\frac{n}{3}+\frac{2n}{3b}}.$$
\end{lemma}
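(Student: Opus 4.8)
The plan is to bound $\mis(\Gamma)$ by first selecting a small ``dominating-type'' set of vertices and then enumerating the independent sets consistently with that choice, exploiting the density hypothesis to make each local choice factor cheaply. Concretely, I would run the standard Moon--Moser branching scheme but interleave it with a greedy peeling argument tailored to the degree bound. Set $b:=\sqrt{\delta(\Gamma)}$. Every maximal independent set $I$ is determined by the following data: a set $W\subseteq V$ of ``witness'' vertices with $|W|\le n/b$ that we guess first, together with a choice, for each remaining vertex, of which neighbourhood-class it falls into. The point of the hypothesis $\Delta(\Gamma)\le k\delta(\Gamma)$ is that the graph is not too far from regular, so after removing a vertex and its neighbourhood we delete at least $\delta(\Gamma)+1$ vertices while the surviving graph retains comparable minimum degree; this lets us iterate the removal roughly $n/b$ times before the vertex count drops by a controlled amount.

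First I would recall the refined Moon--Moser estimate: if one repeatedly picks a vertex $v$ of minimum degree and branches over the choice of which vertex in $N[v]=\{v\}\cup N(v)$ lies in $I$, then the number of maximal independent sets is at most $\prod_i d_i$ where $d_i+1$ is the size of the closed neighbourhood removed at step $i$. Using the arithmetic-geometric mean inequality together with $\sum_i (d_i+1)\le n$, the product $\prod_i(d_i+1)$ is maximised when each factor equals $3$, giving the $3^{n/3}$ bound. The improvement here comes from the fact that $\delta(\Gamma)$ is large: as long as the current minimum degree stays at least $\delta(\Gamma)/2$ (say), each branching factor is at least $\delta(\Gamma)/2$, so each such step removes at least $\delta(\Gamma)/2$ vertices and contributes a factor that is comparable to $3^{(d_i+1)/3}$ but with an \emph{improved} constant reflecting $\Delta\le k\delta$. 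I would make this precise by splitting the factors into the regime where the degree is ``large'' (yielding the improved base $3^{(k/(k+1))\cdot 1/3}$) and a bounded number of low-degree residual steps, which I absorb into the $3^{2n/(3b)}$ slack term.

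To handle the residual low-degree vertices — those appearing once the induced subgraph has shrunk enough that minimum degree can no longer be guaranteed — I would bound their count by $n/b$: since the minimum degree of $\Gamma$ is $\delta(\Gamma)=b^2$, one can only remove $O(n/b)$ vertices before the total number of edges incident to removed high-degree vertices is exhausted, and for these we pay the crude factor $\binom{n}{i}$ by simply guessing the at most $i\le n/b$ places where the branching was ``cheap.'' Summing over the number $i$ of such cheap steps yields the $\sum_{0\le i\le n/b}\binom{n}{i}$ prefactor, while the remaining expensive steps each contribute the improved base, producing the exponent $(\tfrac{k}{k+1})\tfrac{n}{3}+\tfrac{2n}{3b}$.

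The main obstacle, I expect, is making the bookkeeping on the degree regimes rigorous: as vertices are deleted the minimum degree can drop, and one must argue that the number of steps at which the branching factor is small (below some multiple of $\delta(\Gamma)$) is genuinely $O(n/b)$ rather than larger, using $\Delta(\Gamma)\le k\delta(\Gamma)$ to control how fast the degree sequence can deteriorate. The cleanest route is probably a discharging or potential-function argument: assign each deletion step a weight and show the weights sum correctly, so that the ``improved'' factor $(k/(k+1))$ emerges from the ratio $\delta/\Delta$ being bounded below by $1/k$. I would lean on the cited source \cite{BLST} for the precise form, since the lemma is quoted as ``Equation (3)'' there, and my proof sketch is essentially a reconstruction of that regularised Moon--Moser count.
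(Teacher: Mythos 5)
Your sketch inverts the actual mechanism, and the place you flag as ``the main obstacle'' is exactly where it breaks. You claim (a) that the high-degree branching steps each contribute an improved base $3^{(k/(k+1))/3}$ reflecting $\Delta\le k\delta$, and (b) that the number of cheap/low-degree steps is $O(n/b)$. Neither claim is correct. For (a), there is no per-step mechanism by which $\Delta\le k\delta$ improves the base of a Moon--Moser branch: a branch over $N[u]$ with $|N[u]|=d+1$ contributes a factor $d+1$, which for large $d$ is already far below $3^{(d+1)/3}$, and the ratio $\delta/\Delta$ never enters a single step. For (b), consider the graph that shows the lemma is essentially tight (for $k\ge 2$): an independent ``hub'' $U$ of size $n/(k+1)$ completely joined to $kn/(3(k+1))$ disjoint triangles. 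Here $\delta=|U|+2$, $\Delta=k|U|\le k\delta$, and $\mis(\Gamma)\ge 3^{(k/(k+1))n/3}$. Running the branching on this graph, a single step removes the hub, and \emph{every} subsequent step is a degree-$2$, branching-factor-$3$ step: there are $\Theta(n)$ cheap steps, not $O(n/b)$. Relatedly, your assertion that after deleting a closed neighbourhood ``the surviving graph retains comparable minimum degree'' is false in general --- minimum degree is not inherited by induced subgraphs, and no potential-function bookkeeping of the kind you describe can restore claims (a) and (b).

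The argument behind \cite[Eq.~(3)]{BLST} obtains the factor $k/(k+1)$ globally rather than per step. Given a maximal independent set $I$, greedily build $S\subseteq I$ by repeatedly adding a vertex $v\in I$ with $|N[v]\setminus N[S]|\ge b$; since each addition enlarges $N[S]$ by at least $b$, this stops after at most $n/b$ steps, which is where the prefactor $\sum_{0\le i\le n/b}\binom{n}{i}$ (the choice of $S$) comes from. Upon termination every $v\in I\setminus S$ has fewer than $b$ neighbours outside $N[S]$, so $I\setminus S$ lies in the set $L$ of vertices of $\Gamma':=\Gamma\setminus N[S]$ having degree less than $b$ in $\Gamma'$; moreover $I\setminus S$ is a \emph{maximal} independent set of $\Gamma'[L]$, because any $u\in L\setminus I$ has a neighbour in $I$ which, as $u\notin N[S]$, must lie in $I\setminus S$. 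The hypothesis $\Delta\le k\delta$ is used only now, in an edge count: each vertex of $L$ sends at least $\delta-b$ edges into $N[S]$, while $N[S]$ receives at most $|N[S]|\Delta\le (n-|L|)k\delta$ edges, so $|L|(\delta-b)\le (n-|L|)k\delta$, i.e.\ $|L|\le nk/(k+1-1/b)\le kn/(k+1)+2n/b$. Applying plain Moon--Moser to $\Gamma'[L]$ gives at most $3^{(k/(k+1))n/3+2n/(3b)}$ possibilities for $I\setminus S$ per choice of $S$, and since $I=S\cup(I\setminus S)$ this yields exactly the stated bound. Note the subtlety your approach misses: applying Moon--Moser to all of $\Gamma\setminus N[S]$ would not suffice, since that graph can still have $n-o(n)$ vertices; the restriction to the low-degree set $L$, whose size is controlled by the edge count above, is the essential idea.
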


We will also use the following  refined versions of (\ref{Hujter-Tuza}) and (\ref{Moon-Moser}), respectively.

\begin{lemma}\cite[Corollary 3.3]{BLST2}\label{almosttrianglefree}
    Let $n,D\in\N$ and $k\in\R$. Suppose that $\Gamma$ is a graph and $T$ is a set such that $\Gamma':=\Gamma\setminus T$ is triangle-free. Suppose that $\Delta(\Gamma)\leq D$, $v(\Gamma')=n$ and $e(\Gamma')\geq n/2+k$. Then 
    $$\mis(\Gamma)\leq 2^{n/2-k/(100D^2)+2|T|}.$$
\end{lemma}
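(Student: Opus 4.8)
The plan is to prove the statement in two stages. First I isolate the genuinely new ingredient, which I will call the \emph{engine}: a refinement of the Hujter--Tuza bound (\ref{Hujter-Tuza}) that converts a surplus of edges into a saving in the exponent. Writing $\mathrm{ex}(H):=2e(H)-v(H)=\sum_{x\in V(H)}(d(x,H)-1)$ for a graph $H$, the engine I aim to establish is: \emph{if $H$ is triangle-free with $\Delta(H)\leq D$, then $\mis(H)\leq 2^{v(H)/2-\max(\mathrm{ex}(H),0)/(100D^2)}$.} Granting the engine, the second stage is a clean reduction that strips off the set $T$ at the cost of the $2^{2|T|}$ factor, and it is this reduction that will pin down the constant $100$ and the coefficient $2$ in the exponent. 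Note $\Gamma'=\Gamma\setminus T$ is triangle-free with $\Delta(\Gamma')\leq\Delta(\Gamma)\leq D$ and $\mathrm{ex}(\Gamma')=2e(\Gamma')-n\geq 2k$, so the engine will eventually be applied to induced subgraphs of $\Gamma'$.

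For the reduction, I partition the maximal independent sets $I$ of $\Gamma$ according to $S:=I\cap T$. Fixing such an $S$ (an independent subset of $T$), let $\Gamma'_S:=\Gamma'\setminus N_{\Gamma'}(S)$, where $N_{\Gamma'}(S)$ is the set of vertices of $V(\Gamma')$ adjacent in $\Gamma$ to some vertex of $S$. Then $I\cap V(\Gamma')$ avoids $N_{\Gamma'}(S)$, is independent in $\Gamma'_S$, and is in fact \emph{maximal} there: any $w\in V(\Gamma'_S)\setminus I$ has, by maximality of $I$ in $\Gamma$, a neighbour in $I$, and since $w\notin N_{\Gamma'}(S)$ that neighbour lies in $I\cap V(\Gamma')$. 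As distinct $I$ sharing the same $S$ induce distinct sets $I\cap V(\Gamma')$, the number of maximal independent sets of $\Gamma$ with $I\cap T=S$ is at most $\mis(\Gamma'_S)$. Now $\Gamma'_S$ is an induced subgraph of the triangle-free $\Gamma'$, with $\Delta(\Gamma'_S)\leq D$; moreover $|N_{\Gamma'}(S)|\leq |T|D$, and the edges of $\Gamma'$ deleted are incident to $N_{\Gamma'}(S)$, so at most $|T|D^2$ of them. Hence $v(\Gamma'_S)\leq n$ and $\mathrm{ex}(\Gamma'_S)\geq 2k-2|T|D^2$. Applying the engine and summing over the at most $2^{|T|}$ choices of $S$ gives, when $2k-2|T|D^2\geq 0$,
$$\mis(\Gamma)\leq 2^{|T|}\cdot 2^{\,n/2-(2k-2|T|D^2)/(100D^2)}=2^{\,n/2-2k/(100D^2)+|T|(1+1/50)},$$
which is at most $2^{n/2-k/(100D^2)+2|T|}$, as required; in the remaining range $k<|T|D^2$ the engine degenerates to (\ref{Hujter-Tuza}) and the bound $\mis(\Gamma)\leq 2^{n/2+|T|}$ already suffices since then $k/(100D^2)<|T|$. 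This confirms the arithmetic closes exactly with the constant $100$, and explains why a coefficient $2$ on $|T|$ (rather than $1$) is the natural slack.

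It remains to prove the engine, which is where the work lies. I would use (\ref{Hujter-Tuza}) as a black box and extract a saving from the surplus as follows. The set $W$ of vertices of degree at least $2$ satisfies $|W|(D-1)\geq \mathrm{ex}(H)$, so $|W|\geq \mathrm{ex}(H)/(D-1)$; since $H[W]$ has maximum degree at most $D$ it is $(D+1)$-colourable, whence it contains an independent set $J\subseteq W$ of degree-$\geq 2$ vertices with $|J|\geq |W|/(D+1)\geq \mathrm{ex}(H)/(D^2-1)$. The goal is then to show that such a family forces $\mis(H)\leq 2^{v(H)/2-c|J|}$ for an absolute constant $c$; taking $c\geq 1/100$ yields the engine. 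The model case is an isolated copy of the path $P_3=a\,\text{--}\,v\,\text{--}\,b$, where the degree-$2$ vertex $v$ cuts $\mis$ from the Hujter--Tuza value $2^{3/2}$ down to $2$, a saving of $1/2$ per gadget; so a saving of merely $1/100$ per element of $J$ leaves enormous room.

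The main obstacle is exactly this last step: turning ``$J$ independent, every $v\in J$ of degree $\geq 2$'' into a guaranteed constant saving over (\ref{Hujter-Tuza}), \emph{without} assuming the closed neighbourhoods $N[v]$ ($v\in J$) are disjoint. Demanding disjointness (a dissociated family) is tempting and would permit a clean product/branching count gadget-by-gadget, but the greedy extraction of a distance-$\geq 3$ family costs a further factor $\Theta(D^2)$, degrading the exponent to $\mathrm{ex}(H)/D^3$ and missing the target $D^2$. I therefore expect the delicate part to be a local charging argument that accommodates overlapping neighbourhoods: peeling a degree-$1$ vertex whose neighbour has degree $\geq 2$ (which removes three vertices in one branch and two in the other, giving a strict per-step saving), together with a separate treatment of the all-degree-$2$ remainder (paths and cycles, where a direct recursion beats $2^{v/2}$), should let one induct while consuming the surplus recorded by $J$. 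Verifying that these cases interlock and that the accumulated saving is at least $|J|/100$ is the crux; the constants are generous, so a lossy but careful case analysis should close it.
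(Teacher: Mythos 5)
Your stage-two reduction is correct: partitioning the maximal independent sets $I$ of $\Gamma$ by $S:=I\cap T$, passing to $\Gamma'_S:=\Gamma'\setminus N_{\Gamma'}(S)$, and verifying that $I\cap V(\Gamma')$ is a \emph{maximal} independent set of $\Gamma'_S$ is sound; so is the accounting $v(\Gamma'_S)\leq n$, $e(\Gamma')-e(\Gamma'_S)\leq |T|D^2$, and the final arithmetic (indeed it would still close if your engine only gave a saving of $(2e-v)/(200D^2)$, i.e.\ the $T=\emptyset$ case of the lemma itself). Note, however, that the paper does not prove this lemma at all: it is quoted from~\cite[Corollary 3.3]{BLST2}, and in that source the corollary is derived from a stand-alone triangle-free counting theorem by essentially the same $S\subseteq T$ decomposition you use. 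So your stage two reproduces the easy, already-routine step.

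The genuine gap is stage one: your ``engine'' is precisely the content of the cited result, and you do not prove it -- you yourself flag the crux (``verifying that these cases interlock and that the accumulated saving is at least $|J|/100$'') as open. Moreover, the sketch you give is structurally too optimistic in two concrete places. First, after peeling degree-$\leq 1$ vertices one is left with a graph of minimum degree at least $2$, which for $\Delta\geq 3$ is emphatically \emph{not} a disjoint union of paths and cycles, so the ``all-degree-2 remainder'' case does not cover the residual graphs your induction produces; and in that residual regime the excess $2e(H)-v(H)$ is at least $v(H)$, so you need a per-vertex constant saving over (\ref{Hujter-Tuza}), which is the hardest instance of the claim rather than a boundary case. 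Second, per-step savings are not automatic: the natural branching for maximal independent sets, $\mis(H)\leq \sum_{u\in N[v]}\mis(H-N[u])$, applied at a degree-$2$ vertex whose neighbours also have degree $2$ and estimated via (\ref{Hujter-Tuza}) gives $3\cdot 2^{(n-3)/2}\approx 1.06\cdot 2^{n/2}$, i.e.\ it fails even to recover the Hujter--Tuza bound, let alone improve on it. Closing this requires exploiting triangle-freeness inside a delicate multi-case induction -- exactly what occupies Section~3 of~\cite{BLST2} -- so as it stands your proposal only shows that the lemma follows from its own $T=\emptyset$ special case.
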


\begin{lemma}\cite[Lemma 3.5]{ls}\label{stability}
Let $k\in\Z$, $\Delta\in\N$, and $C:= 3^{\Delta/13}$. If $\Gamma$ is an $n$-vertex graph with $n+k$ edges and maximum degree $\Delta$, then
$$\mis(\Gamma)\leq C\cdot 3^{\frac{n}{3}-\frac{k}{13\Delta}}.$$
\end{lemma}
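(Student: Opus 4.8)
The plan is to prove the bound by strong induction on $n$, splitting into two regimes according to the sign of $k$. The factor $C=3^{\Delta/13}$ will only ever be used through the trivial inequality $C\ge 1$ (indeed, it cancels in the inductive step), so the entire content lies in calibrating the exponent. First, the case $k\le 0$ is immediate: by the Moon--Moser bound~(\ref{Moon-Moser}) we have $\mis(\Gamma)\le 3^{n/3}$, and since $C\ge 1$ and $-k/(13\Delta)\ge 0$ this already yields $\mis(\Gamma)\le C\cdot 3^{n/3-k/(13\Delta)}$. Hence all the work is in the regime $k\ge 1$, i.e.\ $e\ge n+1$, where the surplus edges must genuinely be converted into a saving.

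For the inductive step assume $k\ge 1$. Since $\sum_{x}d(x,\Gamma)=2e\ge 2n+2$, there is a vertex $v$ with $d:=d(v,\Gamma)\ge 3$; this is the vertex we branch on. Writing $N[v]$ for the closed neighbourhood of $v$ (that is, $v$ together with its neighbours), every maximal independent set of $\Gamma$ either avoids $v$ (and is then maximal in $\Gamma\setminus\{v\}$) or contains $v$ (and then, with $v$ deleted, is maximal in $\Gamma\setminus N[v]$), which gives the recursion
\[
\mis(\Gamma)\le \mis(\Gamma\setminus\{v\})+\mis(\Gamma\setminus N[v]).
\]
Let $m$ denote the number of edges of $\Gamma$ meeting $N[v]$. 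Then $\Gamma\setminus\{v\}$ has $n-1$ vertices and excess $k+1-d$, while $\Gamma\setminus N[v]$ has $n-1-d$ vertices and excess $k+1+d-m$; both subgraphs have maximum degree at most $\Delta$, so the induction hypothesis applies to each. Substituting these bounds and dividing through by the target $C\cdot 3^{n/3-k/(13\Delta)}$ (the factor $C$ cancels), the inductive step reduces to the single numerical inequality
\[
3^{-\frac13+\frac{d-1}{13\Delta}}+3^{-\frac{1+d}{3}+\frac{m-1-d}{13\Delta}}\le 1,
\]
where one controls the second term using the elementary estimate $m\le\sum_{w\in N(v)}d(w,\Gamma)\le d\Delta$ and the first using $d\le\Delta$.

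The crux is exactly this last inequality, and it is razor-thin. After inserting $m\le d\Delta$ and $d\le\Delta$, the left-hand side becomes a two-term function of $d$ and $\Delta$ which decreases away from the corner $d=\Delta=3$, where it attains its maximum $3^{-11/39}+3^{-47/39}<1$ (numerically $\approx 0.9996$). This near-equality is what pins down the constant $13$: a noticeably smaller constant (even $12$) makes the step fail on a graph such as $K_4$ minus an edge. Two features are essential and should be emphasised. First, one must use the \emph{sharp} edge count $m\le\sum_{w\in N(v)}d(w,\Gamma)$ rather than the cruder $m\le(d+1)\Delta$, whose extra factor would push the second term past the available slack. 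Second, the hypothesis $d\ge 3$ cannot be relaxed: branching on a vertex of degree $1$ or $2$ drives the first term towards $3^{-1/3}$ while the second stays of constant order, so the sum exceeds $1$. It is precisely the observation that $k\ge 1$ forces a vertex of degree at least $3$ that makes the scheme close, and the delicate part of the write-up is the monotonicity verification of the displayed inequality over $3\le d\le\Delta$.
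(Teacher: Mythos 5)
This lemma is not proved in the paper at all: it is quoted verbatim from Liu--Sharifzadeh \cite{ls}, so there is no internal proof to compare against; your proposal has to be judged as a self-contained proof. Its skeleton is sound: the $k\le 0$ case via Moon--Moser, the existence of a vertex of degree $d\ge 3$ when $k\ge 1$, the branching recursion $\mis(\Gamma)\le\mis(\Gamma\setminus\{v\})+\mis(\Gamma\setminus N[v])$, the excess bookkeeping for the two subgraphs, and the estimate $m\le\sum_{w\in N(v)}d(w,\Gamma)\le d\Delta$ are all correct, and they do reduce the inductive step to exactly the displayed inequality. (Note that your argument, if completed, proves the statement with $C=1$, i.e.\ something slightly stronger than the quoted lemma; the factor $C$ in \cite{ls} presumably absorbs slack in their version of the argument.)

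The gap is in the verification of that inequality, which you yourself identify as the crux but do not carry out, and the route you sketch for it does not work. First, the claim that the left-hand side ``decreases away from the corner $d=\Delta=3$'' is false as stated: for fixed large $\Delta$ the left-hand side is convex in $d$ and is \emph{increasing} near $d=\Delta$, and along the diagonal $d=\Delta$ the first term increases with $\Delta$. Second, the decoupled control you propose (``the second term using $m\le d\Delta$ and the first using $d\le\Delta$'') provably fails: replacing the first term by its bound $3^{-1/3+(\Delta-1)/(13\Delta)}\le 3^{-10/39}\approx 0.7545$ and the second by its corner value $3^{-47/39}\approx 0.2661$ gives $\approx 1.021>1$, so the inequality is too tight to treat the two occurrences of $d,\Delta$ independently. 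A correct completion is: for fixed $\Delta$, the left-hand side is a sum of two exponentials in $d$, hence convex in $d$, so its maximum over $3\le d\le\Delta$ is attained at $d=3$ or $d=\Delta$; on the line $d=3$ it equals $3^{-1/3+2/(13\Delta)}+3^{-43/39-4/(13\Delta)}$, which is decreasing in $\Delta$ (the first term always exceeds twice the second), with maximum $3^{-11/39}+3^{-47/39}\approx 0.9996<1$ at $\Delta=3$; on the line $d=\Delta$ one has, for $\Delta\ge 4$, the crude bound $3^{-10/39}+3^{-(13+10\Delta)/39}\le 3^{-10/39}+3^{-53/39}<0.98$. Alternatively, and more simply, branch on a vertex of \emph{maximum} degree (it has degree $\ge 3$ once $k\ge 1$), so that $d=\Delta$ always and only the one-variable diagonal inequality needs checking.
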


The next definition provides the graph that connects our distinct sum-free sets problem to independent sets in graphs.

\begin{definition}
    For subsets $B,S\subseteq G$, let $L^{\star}_S[B]$ be the \emph{distinct link graph of $S$ on $B$} defined as follows. Its vertex set is $B$ and its edge set consists of the following edges:

\begin{enumerate}[label = (\roman*),itemsep=0pt]
    \item two distinct vertices $x,y\in B$ are adjacent if there exists $s\in S$ such that $\{x,y,s\}$ is a distinct Schur triple;
    \item there is a loop at a vertex $x\in B$ if there exist distinct $s,s'\in S$  such that $\{x,s,s'\}$ is a distinct Schur triple.
\end{enumerate}
We call an edge $xy$ in $L^{\star}_S[B]$ a \emph{type 1} edge if $x-y=s$ for some $s\in S\cup (-S)$, and a \emph{type 2} edge if it is not a type 1 edge and $x+y=s$ for some $s\in S$. We denote by $d_i(x,L^{\star}_S[B])$ the number of type $i$ edges incident to $x$ in $L^{\star}_S[B]$. We write $e_i(L^{\star}_S[B])$ for the number of types $i$ edges in $L^{\star}_S[B]$.
\end{definition}

\begin{figure}[h]
    \centering
    \begin{tikzpicture}[scale=1.3]
\draw[blue] (-0.5,0) -- ++(-1.5,0.866) -- ++(0,-1.732) -- cycle;
\draw[blue] (0.5,0) -- ++(1.5,0.866) -- ++(0,-1.732) -- cycle;
\draw[red] (-0.5,0) -- (0.5,0);
\draw[red] (-2,0.866) -- (2,0.866);
\draw[red] (-2,-0.866) -- (2,-0.866);
\node at (-0.3,0.3) {$x+s$};
\node at (0.45,-0.3) {$-x$};
\node at (-2.35,0.866) {$x$};
\node at (-2.65,-0.866) {$x+2s$};
\node at (2.75,0.866) {$-x-2s$};
\node at (2.7,-0.866) {$-x-s$};
\fill[black] (0.5,0) circle (2pt);
\fill[black] (-0.5,0) circle (2pt);
\fill[black] (-2,0.866) circle (2pt);
\fill[black] (-2,-0.866) circle (2pt);
\fill[black] (2,0.866) circle (2pt);
\fill[black] (2,-0.866) circle (2pt);
\end{tikzpicture}
    \caption{\centering Let $S=\{s\}$ such that $s$ has order 3. If $x\in B$ is such that $2x\not\in\{0,s,2s\}$, then $s$ creates 9 edges in the component containing $x$ in $L^{\star}_S[B]$, as shown above. Type 1 edges are represented in blue, and type 2 edges in red.}
    \label{fig:ex distinct link graph 1}
\end{figure}

\begin{figure}[h]
    \centering
\begin{tikzpicture}[scale=1.3]
\draw[blue] (0,0) -- (2,0);
\draw[red] (0,0) -- (-2,0);
\draw[red] (-2,0) arc (-90:-480:0.25cm);
\fill[black] (0,0) circle (2pt);
\fill[black] (-2,0) circle (2pt);
\fill[black] (2,0) circle (2pt);
\node at (0,-0.3) {$2s'$};
\node at (-2,-0.3) {$s+s'$};
\node at (2,-0.3) {$2s+2s'$};
\end{tikzpicture}
    \caption{\centering The distinct link graph $L^{\star}_{\{s,s'\}}[B]$, where $s,s'$ have order 3 and $B=\{s+s',2s',2s+2s'\}$. Note that $B$ is distinct sum-free but not sum-free.}
    \label{fig:ex distinct link graph 2}
\end{figure}
In Figures~1 and~2 we give a couple of examples of distinct link graphs.
The following lemma provides the connection between maximal independent sets in the distinct link graph and maximal distinct sum-free sets.

\begin{lemma}\label{extension link graph}
Suppose that $B,S\subseteq G$ are both distinct sum-free. If $I\subseteq B$ is such that $S\cup I$ is a maximal distinct sum-free subset of $G$, then $I$ is a maximal independent set in $L_S^\star[B]$.   \qed 
\end{lemma}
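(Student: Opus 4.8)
The plan is to prove both directions of what it means for $I$ to be a maximal independent set in $L_S^\star[B]$: namely that $I$ is an independent set, and that it is maximal.

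First I would show that $I$ is \emph{independent} in $L_S^\star[B]$, and in particular contains no vertex with a loop. Suppose for contradiction that $I$ contained an edge $xy$ (with $x,y\in I$). By the definition of the distinct link graph, there is an $s\in S$ such that $\{x,y,s\}$ is a distinct Schur triple; but $x,y,s$ all lie in $S\cup I$, so $S\cup I$ contains a distinct Schur triple, contradicting that it is distinct sum-free. Similarly, if $x\in I$ carried a loop, there would be distinct $s,s'\in S$ with $\{x,s,s'\}$ a distinct Schur triple, again all inside $S\cup I$, the same contradiction. Hence $I$ is independent.

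Next I would show \emph{maximality}. Suppose not, so there is a vertex $x\in B\setminus I$ with no edge (and no loop) to any vertex of $I\cup\{x\}$ in $L_S^\star[B]$; I must derive a contradiction with the maximality of $S\cup I$ as a distinct sum-free set. The natural claim is that $S\cup I\cup\{x\}$ is then itself distinct sum-free, which would contradict the maximality of $S\cup I$. To verify this, I would examine every possible distinct Schur triple using $x$. Any distinct Schur triple inside $S\cup I\cup\{x\}$ must involve $x$ (since $S\cup I$ is already distinct sum-free), and so has the form $\{x,u,v\}$ with $u,v\in S\cup I$. If both $u,v\in I$ then $xu$ or $xv$ would be an edge of $L_S^\star[B]$ (as $x,u,v$ would be a distinct Schur triple with one of the three playing the role of the ``$s$'' only if that element is in $S$ --- here I must be careful), if one of $u,v$ lies in $S$ then the triple witnesses an edge between $x$ and the other vertex, and if both lie in $S$ the triple witnesses a loop at $x$. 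In each case the absence of edges/loops at $x$ is contradicted.

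The main obstacle, and the step requiring the most care, is the bookkeeping in the maximality direction: the link graph only records Schur triples in which at least one element comes from $S$, so I must check that a hypothetical distinct Schur triple $\{x,u,v\}\subseteq S\cup I\cup\{x\}$ with $u,v\in I$ (i.e.\ \emph{no} element from $S$) cannot occur for a different reason --- but this is exactly the case where $x$ together with two elements of the independent set $I$ forms a triple, and such a triple does not correspond to an edge of $L_S^\star[B]$ at all. I would resolve this by recalling that $B$ itself is distinct sum-free (a hypothesis of the lemma), so no distinct Schur triple can lie entirely within $B\supseteq I\cup\{x\}$; hence the case $u,v\in I$ is vacuous, and every relevant triple involves a genuine element of $S$ and is therefore detected as an edge or loop incident to $x$. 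This observation is precisely why the hypothesis that $B$ is distinct sum-free is needed, and it is the crux of the argument.
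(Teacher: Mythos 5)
Your proof is correct and is essentially the paper's own argument: the paper gives no details, remarking only that the proof is identical to that of \cite[Lemma 3.1]{BLST}, and that argument is exactly your two-step check --- independence of $I$ because any edge or loop inside $I$ would produce a distinct Schur triple inside $S\cup I$, and maximality because a vertex $x$ addable to $I$ would make $S\cup I\cup\{x\}$ distinct sum-free (contradicting maximality of $S\cup I$), with the case of a triple $\{x,u,v\}$, $u,v\in I$, ruled out precisely by the hypothesis that $B$ is distinct sum-free, as you identify. The only tacit point --- shared by the lemma as stated and harmless in the paper's application, where $S$ is disjoint from the vertex set --- is that $x\notin S$, so that $S\cup I\cup\{x\}$ is genuinely a proper superset of $S\cup I$.
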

 Note that the proof of Lemma~\ref{extension link graph} is identical to the proof \cite[Lemma 3.1]{BLST} (which deals with  maximal sum-free subsets of $[n]$).

\section{Proof of Theorem~\ref{fms even groups}}\label{sec:mainproof}

In this section we prove Theorem \ref{fms even groups}. The proof is based on the general approach initiated in~\cite{BLST, BLST2}, and also the proof adapts parts of arguments that appear in~\cite{HT, ls}.
Note that  Theorem~\ref{fms even groups} follows immediately from the following two results.

\begin{thm}[Sharp upper bound for even order abelian groups]\label{fms even groups2}
    There exists an absolute constant $C\in \mathbb N$ such that the following holds. 
    Given any  $\eta >0$, there exists an $n_0 \in \mathbb N$ such that the following holds for all even $n \geq n_0$.
    Let $G$ be an  abelian group of   order $n$, where $G=\Z_{2^{\alpha_1}}\oplus\ldots\oplus\Z_{2^{\alpha_{r_1}}}\oplus\Z_2^{r_2}\oplus K,$
    with $r_1$ and $r_2$ non-negative integers, $ \alpha_1\geq\ldots\geq\alpha_{r_1}\geq2$, and $|K|$ odd. Set $r:=r_1+r_2\in \mathbb N$. If ${n}/{2^r}\geq C$, then 
    $$ \fms(G)\leq (2^{2r-1}+2^{r+r_1-1}-2^{r+1}+2^{r_1})2^{n/4}+(2^r-2^{r_1}+\eta)2^{n/4-2^{r-2}}.$$
\end{thm}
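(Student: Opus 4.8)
The plan is to prove Theorem~\ref{fms even groups2} via the container method, using Lemma~\ref{distinct containers} to reduce the global count to a count within each container, and then converting the within-container count into a count of maximal independent sets in the distinct link graph $L_S^\star[B]$ via Lemma~\ref{extension link graph}. Concretely, fix $\delta = \delta(\eta)$ small and apply Lemma~\ref{distinct containers} to obtain a family $\F$ of at most $2^{\delta n}$ containers, each of the form $F = A\cup B$ with $A$ sum-free and $|B|\le \delta n$, each $F$ having at most $\delta n^2$ distinct Schur triples. Every maximal distinct sum-free set $M$ lies in some container $F$, so $M = (M\cap A)\cup(M\cap B)$; writing $S := M\cap A$ (a distinct sum-free set sitting inside the sum-free core $A$) and $I := M\cap B$, Lemma~\ref{extension link graph} tells us $I$ is a maximal independent set in $L_S^\star[A]$ (after enlarging $B$ to $A$, which is harmless since $A$ is also distinct sum-free). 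The total count is then bounded by $\sum_{F\in\F}\ \#\{S\subseteq A\}\cdot \mis(L_S^\star[A])$, and the factor $2^{\delta n}$ from $|\F|$ will be absorbed into the error once we show the dominant contribution is genuinely $2^{n/4}$.

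The heart of the argument is to understand the structure of $A$ and of the link graph $L_S^\star[A]$ for the relevant large sum-free $A$. First I would use Lemma~\ref{large sfs groups I(p)} with $p=2$ (so $k=0$): any sufficiently large sum-free set $A$ is contained in $\psi^{-1}(1)$ for a homomorphism $\psi:G\to\Z_2$, i.e., inside a coset of an index-$2$ subgroup $H=\ker\psi$. The key counting input is Fact~\ref{solutions 2x=g}: in $G$ the equation $2x=g$ has at most $2^r$ solutions, which controls the number of loops (type~2 edges from $x+x$) and hence the number of vertices of the link graph that behave like ``doubled'' elements. The structure of $L_S^\star[A]$ on the coset $\psi^{-1}(1)$ should be close to a perfect matching (type~1 edges pair $x$ with $x+s$, type~2 edges pair $x$ with $s-x$), so that $\mis$ is governed by the triangle-free bound~(\ref{Hujter-Tuza}), giving roughly $2^{|A|/2}\le 2^{\mu(G)/2+O(\delta n)} = 2^{n/4+o(n)}$. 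To get the \emph{sharp} leading constant, however, I would not use the crude $2^{n/2}$ bound but instead carefully enumerate maximal independent sets in an explicit near-matching graph, tracking exactly how many components are single edges versus the special configurations created by order-$2$ elements and by solutions of $2x=g$.

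The explicit constants $2^{2r-1}+2^{r+r_1-1}-2^{r+1}+2^{r_1}$ (coefficient of $2^{n/4}$) and $2^r-2^{r_1}$ (coefficient of the lower-order term $2^{n/4-2^{r-2}}$) must come from a precise accounting of the index-$2$ subgroups of $G$ and their ranks. This is exactly where Lemma~\ref{nb subgroups of index 2 with fixed rank} enters: it tells us there are $2^{r_1}-1$ index-$2$ subgroups of rank $r$ and $2^r-2^{r_1}$ of rank $r-1$, while Fact~\ref{nb order 2 elements} and Lemma~\ref{corr order index} pin down how the order-$2$ elements of $G$ distribute relative to a chosen index-$2$ subgroup $H$. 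The plan is to split the sum over containers according to which index-$2$ subgroup $H$ the core $A$ lives in, and further according to the rank of $H$; each choice of $H$ contributes a factor reflecting the number of near-matching structures, and summing these contributions (with the correct multiplicities from Lemma~\ref{nb subgroups of index 2 with fixed rank}) produces the stated polynomial in $2^r$ and $2^{r_1}$. The subdominant term $2^{n/4-2^{r-2}}$ arises from the rank-$(r-1)$ subgroups, whose associated link graphs have fewer matching edges (because more order-$2$ elements fail to lie in $H$), producing the exponential savings $2^{r-2}$ in the exponent.

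The main obstacle I anticipate is the exact determination of the leading constant: translating the coset/subgroup structure into a precise count of maximal independent sets in the link graph requires a careful case analysis of the edge structure at vertices $x$ with $x=-x+s$ or $2x=g$, i.e., understanding precisely when the link graph deviates from a perfect matching (loops, paths, or small components), and ensuring these deviations are counted with the right multiplicity across all $2^{r_1}-1$ rank-$r$ and $2^r-2^{r_1}$ rank-$(r-1)$ index-$2$ subgroups. Controlling the choice of $S=M\cap A$ simultaneously --- showing it contributes only a $2^{o(n)}$ factor absorbed by $\delta$, rather than interfering with the leading term --- will also require care, most likely by arguing that $S$ together with the matching structure forces $I$ into one of a bounded-per-component number of configurations. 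Getting the error term genuinely $o(n)$ in the exponent (and isolating the honest polynomial prefactor) is the delicate bookkeeping step that makes this a \emph{sharp} rather than merely asymptotic result.
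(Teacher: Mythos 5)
Your high-level framework (containers via Lemma~\ref{distinct containers}, link graphs via Lemma~\ref{extension link graph}, index-2 subgroups via Lemma~\ref{large sfs groups I(p)} and Lemma~\ref{nb subgroups of index 2 with fixed rank}) is the same as the paper's, but there are two genuine gaps. First, your decomposition is inverted: you set $S := M\cap A$ and $I := M\cap B$ and then count $\sum_{F}\#\{S\subseteq A\}\cdot\mis(L_S^\star[A])$. This cannot work. Since $A$ is sum-free, \emph{every} subset of $A$ is distinct sum-free, so the number of choices for $S\subseteq A$ is $2^{|A|}$, and here $|A|$ can be as large as $\mu(G)=n/2$ (type I$(2)$), so this factor alone is $2^{n/2}$, far exceeding the target $2^{n/4+o(n)}$; it cannot be ``absorbed by $\delta$'' as you hope in your last paragraph. (The setup is also internally inconsistent: $I\subseteq B$ cannot be a maximal independent set of a graph whose vertex set is $A$.) The workable decomposition is the reverse one used in the paper: $S := M\cap B$ ranges over distinct sum-free subsets of the \emph{small} set $B$ (at most $2^{\delta n}$ choices), and $I := M\cap A$ is a maximal independent set in $L_S^\star[A]$, whose size is what produces $2^{n/4}$.

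Second, you misidentify where the sharp coefficient comes from, and you omit the case analysis that constitutes most of the proof. The coefficient $2^{2r-1}+2^{r+r_1-1}-2^{r+1}+2^{r_1}$ is not obtained by summing one ``near-matching'' contribution per index-2 subgroup: that would only give the $S=\{0\}$ contribution $(2^{r_1}-1)2^{n/4}+(2^r-2^{r_1})2^{n/4-2^{r-2}}$. The dominant part ($\approx 2^{2r-1}\cdot 2^{n/4}$) comes from \emph{pairs} $(A,\{0,s\})$ where $s$ is an order-2 element outside $A$: for each such pair the link graph $L_{\{0,s\}}^\star[A]$ is a disjoint union of $K_4$'s and $K_2$'s with exactly $2^{n/4}$ maximal independent sets, and the count of pairs (coset of index-2 subgroup, admissible $s$) produces the polynomial in $2^r,2^{r_1}$. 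To make this sharp one must (a) count these dominant configurations over pairs $(\Tilde A, S)$ directly — a number depending only on $r$ — rather than per container, since a per-container count incurs a fatal $2^{\delta n}$ factor on the main term; and (b) prove that \emph{every other} choice of $S$ yields $\mis(L_S^\star[A])\leq 2^{(1/4-\eps)n-2^{r-2}}$, which requires the whole battery of MIS bounds: Moon--Moser for $|A|\leq 4n/9$, Lemma~\ref{dense and regular graphs} for $|S|$ large, Lemma~\ref{stability} plus degree/edge counts (via Fact~\ref{solutions 2x=g}) for small $S$ not of the form $\{0\}$ or $\{0,s\}$, and an order-2/order-3/almost-triangle-free analysis (Lemma~\ref{almosttrianglefree}) for $S=\{s\}$. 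Your proposal contains neither the identification of the two dominant families of $S$ (in particular, the special role of $0$ and of order-2 elements, which is exactly what distinguishes the distinct sum-free problem) nor the machinery to dismiss all other $S$, so the stated constant cannot be recovered from it.
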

The following theorem shows that the upper bound on $\fms(G)$ in Theorem~\ref{fms even groups2} is essentially best possible.
\begin{thm}[Sharp lower bound for even order abelian groups]\label{fms even groups2b}
    There exists an absolute constant $C\in \mathbb N$ such that the following holds. Given any  $\eta >0$, there exists an $n_0 \in \mathbb N$ such that the following holds for all even $n \geq n_0$.
    Let $G$ be an  abelian group of   order $n$, where $G=\Z_{2^{\alpha_1}}\oplus\ldots\oplus\Z_{2^{\alpha_{r_1}}}\oplus\Z_2^{r_2}\oplus K,$
    with $r_1$ and $r_2$ non-negative integers, $ \alpha_1\geq\ldots\geq\alpha_{r_1}\geq2$, and $|K|$ odd. Set $r:=r_1+r_2\in \mathbb N$. If ${n}/{2^r}\geq C$, then 
    $$ (2^{2r-1}+2^{r+r_1-1}-2^{r+1}+2^{r_1})2^{n/4}+(2^r-2^{r_1}-\eta)2^{n/4-2^{r-2}} \leq \fms(G).$$
\end{thm}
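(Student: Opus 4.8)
The goal is a matching lower bound to Theorem~\ref{fms even groups2}, so the plan is to exhibit enough maximal distinct sum-free subsets of $G$ to meet the count
$(2^{2r-1}+2^{r+r_1-1}-2^{r+1}+2^{r_1})2^{n/4}+(2^r-2^{r_1}-\eta)2^{n/4-2^{r-2}}$.
The structure of this target strongly suggests a two-family construction indexed by the index-$2$ subgroups of $G$, split according to their rank. Recall from Lemma~\ref{nb subgroups of index 2 with fixed rank} that $G$ has exactly $2^{r_1}-1$ index-$2$ subgroups of rank $r$ and $2^r-2^{r_1}$ of rank $r-1$; these two counts are precisely the two ``types'' of subgroup one expects to see contributing the leading $2^{n/4}$ term and the lower-order $2^{n/4-2^{r-2}}$ term, respectively. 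So the first thing I would do is fix an index-$2$ subgroup $H$ of $G$ and build, for each such $H$, a large family of maximal distinct sum-free sets supported on the non-trivial coset $G\setminus H$, which is itself a sum-free set of size $n/2=\mu(G)$ (since type \rom{1}$(2)$ groups have $\mu(G)=n/2$).

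\textbf{The core construction.} Given an index-$2$ subgroup $H$, let $B:=G\setminus H$, a distinct sum-free set of size $n/2$. On $B$ consider the involution-type structure used in Proposition~\ref{prop1}: the map $x\mapsto -x$ pairs up most elements of $B$ into orbits $\{x,-x\}$ of size $2$, and choosing one representative from each orbit yields roughly $2^{n/4}$ distinct sum-free subsets (the exponent is $n/4$ rather than $n/2$ because $|B|=n/2$ and we keep half of each pair). The subtlety, and the source of the two different subgroup ranks, is the treatment of the \emph{fixed points} of $x\mapsto -x$ lying in $B$, i.e.\ the order-$2$ elements of $G$ that sit outside $H$. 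By Fact~\ref{solutions 2x=g} and Fact~\ref{nb order 2 elements}, whether an order-$2$ element lies in $H$ is exactly governed by the rank of $H$: a rank-$r$ subgroup contains \emph{all} order-$2$ elements (so $B$ contains no fixed points, giving a clean count), whereas a rank-$(r-1)$ subgroup omits some, and each omitted fixed point forces an additional binary choice that shifts the exponent, producing the $2^{n/4-2^{r-2}}$ scale. The plan is therefore to carry out the orbit-counting separately for the two ranks, using Lemma~\ref{extension link graph} to pass from independent-set choices in the distinct link graph $L^\star_S[B]$ to genuine (extendable) distinct sum-free sets, and to verify via the link-graph loop/edge structure that each choice extends to a distinct \emph{maximal} set.

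\textbf{Ensuring distinctness of the maximal sets.} For each chosen sum-free set $S$, I would let $M(S)$ denote a maximal distinct sum-free subset of $G$ containing $S$; the crux is to show the map $S\mapsto M(S)$ is close to injective, so that the number of maximal sets is at least (essentially) the number of choices of $S$. As in Proposition~\ref{prop1}, the argument is that if two distinct choices $S,S'$ differ in their representative for some orbit $\{x,-x\}$, then $S\cup S'$ contains a Schur triple of the form $0,x,-x$ (one must first arrange $0\in S$, or fix a suitable base point, so this triple is available), so $S$ and $S'$ cannot lie in a common distinct sum-free set and hence $M(S)\neq M(S')$. Summing the contributions over all index-$2$ subgroups of each rank, weighted by the counts $2^{r_1}-1$ and $2^r-2^{r_1}$ from Lemma~\ref{nb subgroups of index 2 with fixed rank}, should reproduce the stated polynomial prefactors, with the $\eta$ slack absorbing lower-order corrections and any small overcounting where the same maximal set is reached from different subgroups.

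\textbf{Main obstacle.} The genuinely hard part will not be the exponential count but the \emph{bookkeeping of the polynomial prefactor}: getting the exact coefficient $2^{2r-1}+2^{r+r_1-1}-2^{r+1}+2^{r_1}$ in front of $2^{n/4}$ requires carefully combining (i) the number of index-$2$ subgroups of each rank, (ii) the number of valid orbit-representative choices for each, and (iii) the additional factors coming from how each family of sum-free sets sits inside its maximal extension, all while guaranteeing that distinct constructions across \emph{different} subgroups yield distinct maximal sets (or controlling the overlap to within the $\eta$ term). Matching the upper bound of Theorem~\ref{fms even groups2} term-by-term means the inclusion–exclusion over subgroups and over the order-$2$ fixed points must be done exactly, and verifying that every constructed set is maximal (not merely extendable) — via the loop structure of the distinct link graph — is where the delicate case analysis will concentrate.
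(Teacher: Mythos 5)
Your outline follows the paper's general strategy (families indexed by index-$2$ subgroups, split by rank via Lemma~\ref{nb subgroups of index 2 with fixed rank}, with link graphs providing distinctness), but the construction you describe misses the family that supplies almost all of the leading coefficient, so it cannot reach the stated bound. You only build sets of the form $\{0\}\cup I$, where $I$ consists of orbit representatives of $x\mapsto -x$ inside one coset $A=\Tilde{A}\oplus K$; equivalently, $I$ is a maximal independent set of $L^{\star}_{\{0\}}[A]$. As computed in Claim~\ref{upper bound type 5}, this link graph has exactly $2^{n/4}$ maximal independent sets when the underlying subgroup has rank $r$, and $2^{n/4-2^{r-2}}$ when it has rank $r-1$, so summing over all index-$2$ subgroups your construction can never produce more than about
\[
(2^{r_1}-1)\,2^{n/4}+(2^{r}-2^{r_1})\,2^{n/4-2^{r-2}}
\]
maximal distinct sum-free sets. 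The theorem requires the coefficient $2^{2r-1}+2^{r+r_1-1}-2^{r+1}+2^{r_1}$ in front of $2^{n/4}$, which is of order $2^{2r}$ --- exponentially larger in $r$ than $2^{r_1}-1$ (and when $r_1=0$ your leading term vanishes entirely while the true one does not, except in the single degenerate case $G=\Z_2\oplus K$). The sets you are missing are those of the form $\{0,s\}\cup I$, where $s$ is an order-$2$ element of $G$ outside $A\cup\{0\}$ and $I$ is a maximal independent set of $L^{\star}_{\{0,s\}}[A]$, a disjoint union of copies of $K_4$ and $K_2$ which again has $2^{n/4}$ maximal independent sets (this is exactly the graph analysed in Proposition~\ref{fms even groups2c}). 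There are $2^{2r-1}+2^{r+r_1-1}-2^{r+1}+1$ such pairs $(A,\{0,s\})$ --- this is the count carried out in Claim~\ref{upper bound type 6} --- and these, not the $S=\{0\}$ families, dominate. The $\eta$ slack cannot absorb the deficit, since $\eta$ multiplies the lower-order term $2^{n/4-2^{r-2}}$, not $2^{n/4}$; nor can these sets be recovered from your family for a different subgroup $H'$, since they meet both cosets of $H\oplus K$.

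Separately, the step you defer --- showing the map $S\mapsto M(S)$ is ``close to injective'' and ``controlling the overlap to within the $\eta$ term'' --- is where the real work lies even for the families you do build; pairwise Schur-triple arguments as in Proposition~\ref{prop1} only give distinctness \emph{within} one family, not across subgroups. The paper's mechanism is different and sharper: it shows that all but at most $(n/2)\cdot 2^{(1/4-\eps)n-2^{r-2}}$ of the maximal independent sets $I$ yield a set $\{0\}\cup I$ (or $\{0,s\}\cup I$) that is \emph{already maximal} in $G$. Extensions by an element $s'$ of order $2$ are ruled out by comparing the sizes of maximal independent sets in $L^{\star}_{\{0\}}[A]$ and $L^{\star}_{\{0,s'\}}[A]$, using the bound $a(s')\leq 2^{r}$ from Fact~\ref{solutions 2x=g}; extensions by any other $s'$ are rare because the corresponding link graphs have at most $2^{(1/4-\eps)n-2^{r-2}}$ maximal independent sets, by the case analysis inside Claim~\ref{type 3} of the upper-bound proof. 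Overlaps between distinct pairs are then bounded explicitly (at most one common maximal set for two $S=\{0\}$ pairs, none between an $S=\{0\}$ pair and an $S=\{0,s\}$ pair, and at most $3^{n/12}$ for two $S=\{0,s\}$ pairs). Without the missing $\{0,s\}$ family and without this maximality-plus-overlap analysis, your argument establishes a lower bound of roughly $(2^{r_1}-1)2^{n/4}$, not the stated one.
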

Together Theorems~\ref{fms even groups2} and~\ref{fms even groups2b} provide a sharp version of  Theorem~\ref{fms even groups}. Though the case $G=\mathbb Z_2^k$ is not covered by these two theorems, notice that the coefficient in front of $2^{n/4}$ in 
Theorems~\ref{fms even groups2} and~\ref{fms even groups2b} agrees with coefficient in front of  $2^{n/4}$ in Theorem~\ref{thm23} (recall that $\fms (\mathbb Z_2^k)=\fm(\mathbb Z_2^k)$).
 
The next result  provides an extremal example for Conjecture~\ref{conj2} for each even order abelian group.

\begin{prop}[General lower bound for even order abelian groups]\label{fms even groups2c}
    Suppose $G$ is an  abelian group of   even order $n$.
    Then 
    $\fms(G)\geq 2^{(n-2)/4}.$
\end{prop}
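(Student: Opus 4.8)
The plan is to exhibit, for every even order $G$, a large family of distinct sum-free sets that pairwise fail to lie in a common maximal distinct sum-free set, via the link graph machinery of Lemma~\ref{extension link graph}. Fix an index $2$ subgroup $H\leq G$ (which exists as $n$ is even) and set $B:=G\setminus H$. Since $B+B\subseteq H$, the set $B$ is sum-free of size $n/2$, and $B=-B$. For a distinct sum-free seed $S_0$ with $S_0\cap B=\emptyset$, Lemma~\ref{extension link graph} shows every maximal distinct sum-free set containing $S_0$ meets $B$ in a maximal independent set of $L^\star_{S_0}[B]$; conversely, if $I\subseteq B$ is independent and $L^\star_{S_0}[B]$ has no loop on $I$, then $S_0\cup I$ is distinct sum-free (a distinct Schur triple would use either two vertices of $I$ and a seed element, giving an edge, or one vertex and two seed elements, giving a loop). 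Moreover, if $I\neq I'$ are independent sets whose union spans an edge, then $S_0\cup I$ and $S_0\cup I'$ lie in different maximal distinct sum-free sets, since otherwise their union would be distinct sum-free yet contain a Schur triple. Thus whenever $L^\star_{S_0}[B]$ is a disjoint union of cliques with no loops, its maximal independent sets are exactly the transversals, any two of which span an edge, and so the number of transversals is a lower bound for $\fms(G)$.

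The first step is to take $S_0=\{0\}$ with the involution $x\mapsto -x$ on $B$. With this seed the only edges are the pairs $\{x,-x\}$ with $x$ of order $>2$, and there are no loops; the order $2$ vertices $O_B:=B\cap G[2]$, where $G[2]:=\{g:2g=0\}$, are isolated. Hence $L^\star_{\{0\}}[B]$ is a perfect matching on $B\setminus O_B$ together with $|O_B|$ isolated vertices, and it has $2^{(n/2-|O_B|)/2}$ transversals. Writing $G=\Z_{2^{\alpha_1}}\oplus\cdots\oplus\Z_{2^{\alpha_r}}\oplus K$ with $K$ odd and each $\alpha_i\geq1$, the kernel of the map $G[2]\to G/H\cong\Z_2$ shows $|O_B|\in\{0,2^{r-1}\}$, with $|O_B|=0$ achievable (by choosing $H\supseteq G[2]$) exactly when $|G/G[2]|$ is even, i.e. when some $\alpha_i\geq2$. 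In that case we obtain $\fms(G)\geq 2^{n/4}$, stronger than required. When every $\alpha_i=1$ and $r=1$, so $G=\Z_2\oplus K$, we are forced to take $|O_B|=1$ (the unique involution in $B$), whence $\fms(G)\geq 2^{(n/2-1)/2}=2^{(n-2)/4}$; this tight case accounts for the $-2$ in the statement.

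The remaining, and main, case is $G=\Z_2^r\oplus K$ with $r\geq2$, where $|O_B|=2^{r-1}\geq2$ and the seed $\{0\}$ is too wasteful. The key idea is to enlarge the seed to $S_0=\{0,w_0\}$ with $w_0\in(G[2]\cap H)\setminus\{0\}$ (such $w_0$ exists since $|G[2]\cap H|=2^{r-1}\geq2$). Because $2G=\{0\}^r\oplus K$ while $w_0\in(\Z_2^r\oplus\{0\})\setminus\{0\}$, we have $w_0\notin 2G$, so $2x=w_0$ has no solution and $\phi(x):=w_0-x$ is fixed-point-free, whereas the fixed points of $\sigma(x):=-x$ are exactly $O_B$ and $\tau(x):=w_0+x$ has none. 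Together $\sigma,\tau,\phi$ generate a Klein four-group $V\cong\Z_2^2$ acting on $B$ in which only $\sigma$ fixes points. The neighbours of $x$ in $L^\star_{S_0}[B]$ are precisely $-x$ (from $0$) and $w_0\pm x$ (from $w_0$), and one checks there are no loops; consequently each free $V$-orbit $\{x,-x,w_0+x,w_0-x\}$ induces a $K_4$, while each two-element orbit $\{x,w_0+x\}$ with $x\in O_B$ induces a $K_2$. Thus $L^\star_{S_0}[B]$ is a disjoint union of cliques with no loops, and its number of transversals is $4^{(n/2-2^{r-1})/4}\cdot 2^{2^{r-2}}=2^{n/4}$, giving $\fms(G)\geq 2^{n/4}$.

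I expect the main obstacle to be this last case: verifying that the two-element seed yields exactly a disjoint union of $K_4$'s and $K_2$'s. The delicate points are (i) choosing $w_0\notin 2G$ so that $\phi$ is fixed-point-free, ensuring the only degenerate orbits come from $\sigma$ acting on $O_B$, and (ii) checking the absence of loops, so that independent sets genuinely yield distinct sum-free sets and the transversal count is valid. Once these hold, the conflict principle of the first paragraph converts the $2^{n/4}$ transversals into that many distinct maximal distinct sum-free sets, settling all cases with room to spare except $G=\Z_2\oplus K$, which attains the bound exactly.
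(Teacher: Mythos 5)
Your proposal is correct and follows essentially the same route as the paper: both arguments work with the distinct link graph on the nontrivial coset of an index-2 subgroup, seeded by $\{0\}$ or by $\{0,s\}$ for an order-2 element $s$ of the subgroup, show this graph is a disjoint union of copies of $K_2$ and $K_4$ (giving $2^{n/4}$, or $2^{(n-2)/4}$ when $G=\Z_2\oplus K$, maximal independent sets), and convert distinct maximal independent sets into distinct maximal distinct sum-free sets via the same conflict argument. The only cosmetic difference is the case split: the paper uses the seed $\{0,s\}$ in all cases except $G=\Z_2\oplus K$, whereas you use the seed $\{0\}$ with an index-2 subgroup containing all involutions when some $\alpha_i\geq 2$, reserving the two-element seed for $G=\Z_2^r\oplus K$ with $r\geq 2$.
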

\begin{proof}
 Let $G$ be an  abelian group of even  order $n$.
 Thus,
$G=\Z_{2^{\alpha_1}}\oplus\ldots\oplus\Z_{2^{\alpha_{r_1}}}\oplus\Z_2^{r_2}\oplus K$
    for some  non-negative integers $r_1$ and $r_2$, such that $r:=r_1+r_2\in \mathbb N$, $\alpha_1\geq\ldots\geq\alpha_{r_1}\geq2$, and $|K|$ is odd.
    Let $H$ be an index 2 subgroup of 
$\Z_{2^{\alpha_1}}\oplus\ldots\oplus\Z_{2^{\alpha_{r_1}}}\oplus\Z_2^{r_2}$ and let
   $\Tilde{A}$ be the coset associated to $H$ that does not contain the identity (i.e., the coset such that $\Tilde{A}\ne H$). 

   We split our argument into  two cases. First, suppose that $r_1 \geq 1$ or $r_2 \geq 2$. In this case $|H|$ is even, so there is some $s\in G\setminus (\Tilde{A} \oplus K)=H \oplus K$ of order 2.
    Set $\Gamma:=L_{\{0,s\}}^\star[\Tilde{A}\oplus K]$. We now have two subcases depending on the rank of $H$.
    \begin{enumerate}[label=(\roman*)]
        \item $H$ has rank $r-1$. In this subcase,
         if $x \in \Tilde{A}\oplus K$ is such that $2x\not \in\{0,s\}$, then the component of $\Gamma $ containing $x$ is precisely a 
         $K_4$ containing the vertices  $x,x+s,s-x,-x$. If $2x=0$, then $x$'s component in $\Gamma$ is the  edge $\{x,x+s\}$; if 
         $2x=s$, then $x$'s component in $\Gamma$ is the edge $\{x,-x\}$. 
         Since $H$ has rank $r-1$, there are $2^{r-1}$ order 2 elements in $\Tilde{A} \oplus K$. Let us denote by $a(s)$ the number of 
         elements $x\in \Tilde{A} \oplus K$ such that $2x=s$. Since $\mis(K_4)=4$ and $\mis(K_2)=2$, we then have
        $$\mis(\Gamma)=2^{\frac{2^{r-1}+a(s)}{2}}\cdot 4^{\frac{n/2-2^{r-1}-a(s)}{4}}=2^{n/4}.$$
        \item $H$ has rank $r$. In this subcase there are no order 2 elements in $\Tilde{A} \oplus K$, so $\Gamma$ is the disjoint union of $a(s)/2$ copies of $K_2$ and $(n/2-a(s))/4$ copies of $K_4$. Thus,
        $$\mis(\Gamma)=2^{\frac{a(s)}{2}}\cdot 4^{\frac{n/2-a(s)}{4}}=2^{n/4}.$$
    \end{enumerate}
    In both subcases, $\mis(\Gamma)=2^{n/4}$. Now if $I$ is a maximal independent set in $\Gamma$, then $\{0,s\}\cup I$ is a distinct sum-free subset of $G$, however, it might not be maximal. But if $I'\ne I$ is another maximal independent set in $\Gamma$, then $\{0,s\}\cup I$ and $\{0,s\}\cup I'$ must lie in different maximal distinct sum-free subsets of $G$; indeed, otherwise  $I\cup I'$ is an independent set in $\Gamma$, a contradiction as $I$ and $I'$ are maximal independent sets in $\Gamma$. Hence, $\fms(G)\geq\mis(\Gamma)=2^{n/4}$ in this case.

    Finally, suppose that $r_1=0$ and $r_2=1$. Thus, $G=\Z_2\oplus K$. Set $\Gamma:=L_{\{0\}}^\star[\{1\}\oplus K]$.
    In this case, $\Gamma$ consists of
    an isolated vertex $\{1\}\oplus\{0\}$, together with 
    a matching where edges are of the form $\{x, -x\}$ for $x \in \{1\}\oplus K$.  Thus, $\mis(\Gamma)= 2^{\frac{|K|-1}{2}}= 2^{(n-2)/4}$. Arguing as in the first case gives 
    $\fms(G)\geq\mis(\Gamma)= 2^{(n-2)/4}$, as desired.
\end{proof}
Note that one can rephrase the proof of Proposition~\ref{prop1} in terms of link graphs as in the proof of Proposition~\ref{fms even groups2c}.

\subsection{Proof of Theorem~\ref{fms even groups2}}

    Let $C:=3235084117$; in footnote 2 we point out the  place in the argument where this precise choice of $C$ is used. Given any $\eta >0$, define additional constants $\delta, \eps, n_0 >0$ so that
$$0< 1/n_0 \ll \delta \ll \eps \ll \eta, 1/C.$$
Here the additional constants in the hierarchy are chosen from right to left.

Let $G$ be an abelian group of even order $n \geq n_0$ as in the statement of the theorem. 
Thus,
$G=\Z_{2^{\alpha_1}}\oplus\ldots\oplus\Z_{2^{\alpha_{r_1}}}\oplus\Z_2^{r_2}\oplus K$ with $r_1$ and $r_2$ non-negative integers, $\alpha_1\geq\ldots\geq\alpha_{r_1}\geq2$,  and $|K|$ odd. Furthermore, $n/2^{r_1+r_2}\geq C$.

Apply Lemma~\ref{distinct containers} to $G$ with parameter $\delta $ to obtain a collection $\mathcal F$ of containers satisfying conditions (i)--(iv) of the lemma. 

 By Lemma~\ref{distinct containers}(ii), to obtain an upper bound on $\fms(G)$ it suffices to bound the number of maximal distinct sum-free subsets of $G$ that lie in the containers.
Given any $F \in \mathcal F $,
let $\fms(F)$ denote the number of maximal distinct sum-free subsets of $G$ that lie in $F$.
Lemma~\ref{distinct containers}(iv) implies that
$F=A\cup B$ where $A$ and $B$ are disjoint, $A$ is sum-free and $|B|\leq \delta n$.
Thus, each maximal distinct sum-free subset of $G$ lying in $F$ can be obtained in the following way:
\begin{enumerate}[label=(\arabic*), itemsep=0pt]
    \item Choose a (perhaps empty) distinct sum-free set $S$ in $B$;
    \item Extend $S$ in $A$ to a maximal one.
\end{enumerate}

This two-step approach, combined with Lemma~\ref{extension link graph}, shows that 
$$\fms(F)\leq 2^{\delta n}\max_{\substack{S\subseteq B\\ S \text{ is distinct sum-free}}} \mis(L_S^\star[A]),$$
where $L_S^\star[A]$ is the distinct link graph of $S$ on $A$. 
With this in mind, we now define 7 different types of maximal distinct sum-free subsets of $G$.
\begin{enumerate}
    \item[\textbf{Type 0:}] those obtained from a container $F$ where $S$ is chosen to be empty.
    \item[\textbf{Type 1:}] those obtained from a container $F$ where $|A|\leq 4n/9$ and $|S|\geq 1$.
    \item[\textbf{Type 2:}] those obtained from a container $F$ where $|A|> 4n/9$ and  $|S|> 87^2$.
    \item[\textbf{Type 3:}] those obtained from a container $F$ where $|A|> 4n/9$ and $2\leq|S|\leq 87^2$, with $S\ne\{0,s\}$ for every order 2 element $s\in G$.
    \item[\textbf{Type 4:}] those obtained from a container $F$ where $|A|> 4n/9$ and $S=\{s\}$ with $s\ne0$.
    \item[\textbf{Type 5:}] those obtained from a container $F$ where $|A|> 4n/9$ and $S=\{0\}$.
    \item[\textbf{Type 6:}] those obtained from a container $F$ where $|A|> 4n/9$ and $S=\{0,s\}$ with $s$ an order 2 element.
\end{enumerate}

Let $f_{\max}^{\star,i}(G)$ be the total number of maximal distinct sum-free sets of type $i$ in $G$. Similarly, we write $f_{\max}^{\star,i}(F)$ for the total number of maximal distinct sum-free sets that lie in the container $F$. Note that each container $F$ can produce at most one type 0 maximal distinct sum-free set (namely $A$), so $f_{\max}^{\star,0}(G)\leq |\mathcal F| \leq 2^{\delta n}$ by Lemma \ref{distinct containers}.

Consider any container $F\in \mathcal F$.
Now we fix a non-empty distinct sum-free set $S\subseteq B$. In what follows we count how many ways we can extend $S$ in $A$ to a maximal distinct sum-free subset of $G$. Let $\Gamma:=L_S^\star[A]$ be the distinct link graph of $S$ on $A$. From Lemma \ref{extension link graph}, we see that the number of extensions of $S$ in $B$ to a maximal sum-free set is at most $\mis(\Gamma)$.

The next claim will be used to bound $f_{\max}^{\star,1}(G)$.
\begin{claim}\label{type 1}
    If $|A|\leq 4n/9$, then $\mis(\Gamma)\leq 2^{(1/4-\eps)n-2^{r-2}}$.
\end{claim}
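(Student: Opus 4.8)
The plan is to bound $\mis(\Gamma)$ where $\Gamma = L_S^\star[A]$, $A$ is sum-free, and $|A| \leq 4n/9$, using the minimum-degree machinery of Lemma~\ref{dense and regular graphs}. First I would establish a lower bound on the minimum degree $\delta(\Gamma)$. The key observation is that $A$ is a large sum-free set (we may assume $|A|$ is close to $\mu(G) \approx n/3$, or at least a positive proportion of $n$, since otherwise the trivial bound $\mis(\Gamma) \leq 2^{|A|}$ already suffices). For each vertex $x \in A$ and each nonzero $s \in S \cup (-S)$, the element $x + s$ is a potential type~1 neighbour, and for each $s \in S$ the element $s - x$ is a potential type~2 neighbour; these lie in $A$ with high frequency because $A$ occupies a constant fraction of a coset structure. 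Using the fact that $S$ is nonempty and the sum-free structure forces $A$ to be essentially an interval in some $\Z_p$-projection (via Lemma~\ref{large sfs groups I(p)}-type reasoning, or directly via the even-order structure), I would argue that a typical vertex has degree of order $\Theta(n)$, so that $\delta(\Gamma) \geq cn$ for an appropriate constant $c > 0$.

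The second step is to control the ratio $\Delta(\Gamma)/\delta(\Gamma)$. Here Fact~\ref{solutions 2x=g} is crucial: the maximum degree is bounded because the number of type~1 and type~2 edges at any vertex is governed by $|S|$ and by the number of solutions to equations like $2x = g$, which is at most $2^r$. Since the degrees are all comparable up to a bounded multiplicative factor $k$ (depending on the structure but not blowing up), we obtain $\Delta(\Gamma) \leq k\,\delta(\Gamma)$ for a bounded $k$. Then applying Lemma~\ref{dense and regular graphs} with $b = \sqrt{\delta(\Gamma)} = \Theta(\sqrt{n})$, the summation $\sum_{0 \leq i \leq n/b} \binom{n}{i}$ contributes only a $2^{o(n)}$ factor, and the main term is $3^{\frac{k}{k+1} \cdot \frac{|A|}{3} + o(n)}$. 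Because $|A| \leq 4n/9$, this gives an exponent of at most $\frac{k}{k+1} \cdot \frac{4n}{27} + o(n)$, which I would check is comfortably below $(1/4 - \eps)n$ once the constant $C$ (equivalently, the lower bound on $n/2^r$) is large enough to pin down $k$ and hence the ratio $\frac{k}{k+1}$.

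The delicate point is the subtracted term $2^{r-2}$ in the target bound $2^{(1/4-\eps)n - 2^{r-2}}$. Since $n/2^r \geq C$, we have $2^{r-2} \leq n/(4C)$, which is linear in $n$ but with a tiny coefficient; absorbing it requires that the main estimate beat $(1/4 - \eps)n$ by at least this much slack. Given the gap between the $\Theta(n/27)$-scale main term and the $n/4$ target, this slack is easily available, so I would fold $2^{r-2} \leq n/(4C)$ into the $\eps n$ error by choosing $\eps$ appropriately relative to $C$ in the hierarchy $\delta \ll \eps \ll \eta, 1/C$.

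The main obstacle I anticipate is the minimum-degree lower bound, and specifically handling vertices $x \in A$ whose potential neighbours $x+s$ or $s-x$ happen to fall outside $A$ (into $B$ or otherwise). Establishing $\delta(\Gamma) \geq cn$ cleanly may require either passing to a large sub-structured core of $A$ and deleting a bounded number of low-degree vertices (at a cost of a $2^{o(n)}$ factor in the $\mis$ count, which is harmless), or exploiting the coset structure of $A$ more explicitly so that the translates $x + s$ stay inside $A$ for a positive fraction of $x$. I would handle the stray low-degree vertices by a deletion argument analogous to the set $T$ in Lemma~\ref{almosttrianglefree}, ensuring the regularity hypothesis of Lemma~\ref{dense and regular graphs} applies to the bulk of the graph.
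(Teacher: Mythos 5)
Your approach has a fatal structural error: the minimum degree of the link graph $\Gamma = L_S^\star[A]$ is \emph{not} $\Theta(n)$, and cannot be made so. By the definition of the distinct link graph, a vertex $x\in A$ has at most one type~1 neighbour $x+s$ for each $s\in (S\cup -S)\setminus\{0\}$ and at most one type~2 neighbour $s-x$ for each $s\in S$, so every degree is at most $|(S\cup -S)\setminus\{0\}|+|S|\leq 3|S|$. In the type~1 case being handled here, $S$ is an arbitrary nonempty distinct sum-free subset of $B$ and may well have $|S|=1$, in which case $\Delta(\Gamma)\leq 3$; the graph is sparse, not dense. Worse, since $A$ here is an arbitrary sum-free set (not a coset, as the structural Lemma~\ref{large sfs groups I(p)} is only invoked when $|A|>4n/9$), vertices of $A$ can be isolated in $\Gamma$, so $\delta(\Gamma)$ can be $0$ and the hypothesis $\Delta(\Gamma)\leq k\,\delta(\Gamma)$ of Lemma~\ref{dense and regular graphs} can fail outright (and $b=\sqrt{\delta(\Gamma)}$ is then degenerate). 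Your fallback observation is also insufficient: the trivial bound $\mis(\Gamma)\leq 2^{|A|}$ only beats the target when $|A|\lesssim n/4$, so it does not cover the range $n/4<|A|\leq 4n/9$. In short, both pillars of your plan (dense regularity, or triviality when $A$ is small) collapse, and the deletion of "stray low-degree vertices" cannot rescue this, since \emph{all} vertices may have bounded degree.

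The point you missed is that the case $|A|\leq 4n/9$ is carved out precisely so that no degree analysis is needed at all: the Moon--Moser bound (\ref{Moon-Moser}) applied to the $|A|$-vertex graph $\Gamma$ gives
$$\mis(\Gamma)\leq 3^{|A|/3}\leq 3^{4n/27}=2^{\frac{4\log_2 3}{27}n}\approx 2^{0.2349\,n},$$
and since $\frac{4}{27}\log_2 3<\frac14$, the slack of roughly $0.015n$ absorbs both the $\eps n$ term (as $\eps\ll 1$) and the term $2^{r-2}\leq n/(4C)$ (as $n/2^r\geq C$). That one-line argument is the paper's entire proof. Your instinct in the final paragraph about folding $2^{r-2}$ into linear slack is correct, but the machinery you propose to generate that slack does not apply to this graph.
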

\begin{proof}
     By the Moon--Moser bound (\ref{Moon-Moser}),
    $$\mis(\Gamma)\leq 3^{|A|/3}\leq 3^{4n/27}.$$
    The choice of $\eps$ and $C$ implies that $3^{4n/27}\leq2^{(1/4-\eps)n-2^{r-2}}$ since $n/2^r\geq C$.
\end{proof}

Now we consider type 2--6 maximal distinct sum-free sets. Suppose $|A|>4n/9$;  by Lemma~\ref{large sfs groups I(p)} we can assume that $A\subseteq\Tilde{A}\oplus K$ with $\Tilde{A}$ a coset (not containing the zero element) associated to a subgroup of index 2 in $\Z_{2^{\alpha_1}}\oplus\ldots\oplus\Z_{2^{\alpha_{r_1}}}\oplus\Z_2^{r_2}$. 
In fact, (by adding elements to the container $F$ if necessary) we may assume $A=\Tilde{A}\oplus K$, as $f_{\max}^{\star,i}(F)\leq f_{\max}^{\star,i}(F')$ for $F'\supseteq F$, $i\in [6]$.  So $A$ is still sum-free.
We can also  still assume that $A$ and $B \supseteq S$  are disjoint.
Under these conditions, we obtain the following information about the regularity of $\Gamma$.

\begin{claim}\label{degree distinct link graph} Let $x\in A$. Then,
    \begin{itemize}
        \item $d_1(x,\Gamma)=|(S\cup-S)\setminus \{0\}|$;
        \item $d_2(x,\Gamma)\leq |S|$;
        \item $\Gamma $ does not contain any loops.
    \end{itemize}
    In particular, 
    $$|(S\cup-S)\setminus \{0\}|\leq \delta(\Gamma)\leq \Delta(\Gamma)\leq |(S\cup-S)\setminus \{0\}| + |S|.$$
    Consequently,
    \begin{enumerate}
        \item If $0\in S$ and $S=-S$, then $\Delta(\Gamma)\leq 2\delta(\Gamma)+1$.
        \item Otherwise, $\Delta(\Gamma)\leq 2\delta(\Gamma)$.
    \end{enumerate}
\end{claim}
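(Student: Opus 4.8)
The plan is to first determine the coset of $G$ in which $S$ must lie, and then to read each of the three bullet points directly from the definition of the distinct link graph $\Gamma=L_S^\star[A]$. Let $H'$ be the index-$2$ subgroup of $\Z_{2^{\alpha_1}}\oplus\cdots\oplus\Z_2^{r_2}$ whose non-identity coset is $\Tilde{A}$, and let $\psi\colon G\to\Z_2$ be the homomorphism with kernel $H'\oplus K$. Since we have arranged that $A=\Tilde{A}\oplus K$, the set $A$ is \emph{exactly} the coset $\psi^{-1}(1)$, while $\psi^{-1}(0)=H'\oplus K$. As $S\subseteq B$ is disjoint from $A$, no element $s\in S$ can satisfy $\psi(s)=1$ (it would otherwise lie in $A$); hence $S\subseteq H'\oplus K=\ker\psi$. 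I regard this containment as the crux of the whole claim, since it is precisely what upgrades the type-$1$ count from an inequality to an equality. I will also record that $\psi(x\pm y)=0$ for all $x,y\in A$, so that $x\pm y\in\ker\psi$.

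For the type-$1$ degree, fix $x\in A$ and, for each $t\in(S\cup-S)\setminus\{0\}$, set $y:=x-t$. Because $t\in\ker\psi$ and $x\in\psi^{-1}(1)$ we get $y\in A$, and $y\neq x$ as $t\neq0$; the witness $s\in\{t,-t\}\cap S\subseteq B$ is distinct from $x,y\in A$, so $\{x,y,s\}$ is a genuine distinct Schur triple and $xy$ is a type-$1$ edge. The assignment $t\mapsto x-t$ is injective (as $t=x-y$) and, by the definition of a type-$1$ edge, hits every type-$1$ neighbour of $x$; this yields $d_1(x,\Gamma)=|(S\cup-S)\setminus\{0\}|$, the same value for every $x$.

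Next I would bound the type-$2$ degree and rule out loops. A type-$2$ edge at $x$ satisfies $x+y=s$ for some $s\in S$, so $y=s-x$ is determined by $s$ and the map $y\mapsto x+y$ sends distinct type-$2$ neighbours to distinct elements of $S$; hence $d_2(x,\Gamma)\leq|S|$. A loop at $x$ would require distinct $s,s'\in S$ forming a distinct Schur triple with $x$; applying $\psi$ to each of the three possibilities $x=s+s'$, $s=x+s'$, $s'=x+s$ and using $S\subseteq\ker\psi$ forces $\psi(x)=0$, contradicting $x\in\psi^{-1}(1)$. Thus $\Gamma$ has no loops.

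Finally, since every edge is either type $1$ or type $2$ we have $d(x,\Gamma)=d_1(x,\Gamma)+d_2(x,\Gamma)$, and the three bullets combine to give $|(S\cup-S)\setminus\{0\}|\leq\delta(\Gamma)\leq\Delta(\Gamma)\leq|(S\cup-S)\setminus\{0\}|+|S|$. Writing $m:=|(S\cup-S)\setminus\{0\}|$, the two numbered consequences reduce to comparing $m$ with $|S|$: if $0\in S$ and $S=-S$ then $|S|=m+1$, giving $\Delta(\Gamma)\leq m+|S|=2m+1\leq2\delta(\Gamma)+1$; otherwise (i.e.\ $0\notin S$, or $S\neq-S$) a short check shows $m\geq|S|$, so $\Delta(\Gamma)\leq m+|S|\leq2m\leq2\delta(\Gamma)$. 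The single genuinely substantive step is the containment $S\subseteq\ker\psi$ from the opening paragraph, which relies on $A$ being a full coset and on $A\cap S=\emptyset$; every subsequent step is routine bookkeeping.
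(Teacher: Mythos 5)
Your proof is correct and follows essentially the same route as the paper: both hinge on the observation that $A$ is a full coset of an index-$2$ subgroup with $S$ contained in that subgroup (your homomorphism $\psi$ is just a reformulation of the paper's ``$A=g+H$, $S\subseteq H$''), then count type-$1$ edges exactly, bound type-$2$ edges by $|S|$, rule out loops by the coset parity argument, and finish with the same case analysis on whether $0\in S$ and $S=-S$. Your write-up is simply a more detailed version of the paper's terse proof, with no substantive difference in approach.
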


\begin{proof}
    For the first part, note that each element $s$ of $(S\cup-S)\setminus \{0\}$ generates a unique type 1 neighbour $y=x+s$. For the second part, we have that $x$ is incident to all edges of the form $(x,s-x)$, with $s\in S$ such that $x\ne s-x$; we have an upper bound because, e.g.,  some of those edges might be type 1. 

    Note that our assumptions ensure $A=g + H$ where $H$ is some index $2$ subgroup of $G$ and $g \notin H$, and where $S \subseteq H$. This, together with the definition of the distinct link graph, ensures $\Gamma$ does not contain loops.
    
    The only possibility for $|(S\cup-S)\setminus \{0\}|+|S|$ to be greater than $2|(S\cup-S)\setminus \{0\}|$ is that $0\in S$, and $S=-S$. Hence, (1) and (2) follow.
\end{proof}
\begin{claim}\label{nb edges distinct link graph}
    $e(\Gamma)\geq \frac{|(S\cup -S)\setminus \{0\}|+|S|}{2}|A|-|S|(|S\cup -S|+1)2^r\geq \frac{\Delta(\Gamma)|A|}{2}-|S|(|S\cup -S|+1)2^r$.
\end{claim}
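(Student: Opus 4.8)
The plan is to bound $e(\Gamma)$ from below through a double count of the degree sum $\sum_{x\in A} d(x,\Gamma)=2e(\Gamma)$, which is legitimate because $\Gamma$ has no loops by Claim~\ref{degree distinct link graph}. Splitting each degree into its type~1 and type~2 contributions, $d(x,\Gamma)=d_1(x,\Gamma)+d_2(x,\Gamma)$ (these are disjoint since every edge is exactly one type by definition), Claim~\ref{degree distinct link graph} supplies the exact value $\sum_{x\in A}d_1(x,\Gamma)=|(S\cup-S)\setminus\{0\}|\cdot|A|$. Thus the entire task reduces to a lower bound on $\sum_{x\in A}d_2(x,\Gamma)$, after which dividing the degree sum by $2$ delivers the first displayed inequality.

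For the type~2 part I would first record the structural facts coming from the normalisation $A=\Tilde{A}\oplus K=g+H$ established just before the claim, where $H$ is an index~$2$ subgroup containing $S$ and $g\notin H$. Since $2g\in H$, every candidate neighbour $s-x$ (for $s\in S$, $x\in A$) satisfies $(s-x)-g\in H$ and so lies back in $A$; moreover the triple $\{x,s-x,s\}$ is automatically distinct, as $s\neq x$ (because $S\subseteq H$ while $x\notin H$) and $s\neq s-x$ (because $0\notin A$). Hence the only ways $\{x,s-x\}$ fails to be a genuine type~2 edge are $2x=s$ (the candidate coincides with $x$) or $2x-s\in S\cup(-S)$ (the edge is type~1). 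Reading off $d_2(x,\Gamma)$ as the number of $s\in S$ surviving these two exclusions and reversing the order of summation gives
$$\sum_{x\in A}d_2(x,\Gamma)\ \geq\ \sum_{s\in S}\Big(|A|-|\{x\in A:2x=s\}|-|\{x\in A:2x-s\in S\cup(-S)\}|\Big).$$
Now Fact~\ref{solutions 2x=g} bounds each exclusion: $|\{x\in A:2x=s\}|\le 2^r$, while $\{x\in A:2x-s\in S\cup(-S)\}=\bigcup_{t\in S\cup(-S)}\{x:2x=s+t\}$ is a union of $|S\cup -S|$ sets each of size at most $2^r$, so its cardinality is at most $|S\cup -S|\,2^r$. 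Therefore $\sum_{x\in A}d_2(x,\Gamma)\geq |S||A|-|S|(|S\cup -S|+1)2^r$, and adding the type~1 contribution yields $2e(\Gamma)\geq(|(S\cup-S)\setminus\{0\}|+|S|)|A|-|S|(|S\cup -S|+1)2^r$; halving proves the first inequality (in fact with the error term halved, so there is room to spare). The second inequality is then immediate from the bound $\Delta(\Gamma)\le|(S\cup-S)\setminus\{0\}|+|S|$ in Claim~\ref{degree distinct link graph}.

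This is fundamentally a routine double count, so I do not expect a serious obstacle; the only genuine care lies in the type~2 analysis, where one must (i) confirm that the candidate neighbours $s-x$ never leave the vertex set $A$, (ii) verify that the distinctness requirement of a Schur triple holds automatically in this coset, and (iii) apply Fact~\ref{solutions 2x=g} to the two exclusion sets with the correct multiplicities $1$ and $|S\cup -S|$. All three rely solely on the structural normalisation $A=g+H$ with $S\subseteq H$ and $0\notin A$, together with Claim~\ref{degree distinct link graph}, so no input beyond what is already available is needed.
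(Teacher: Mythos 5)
Your proof is correct and is essentially the paper's own argument: both are the same double count in which the type-1 contribution is exact (from Claim~\ref{degree distinct link graph}) and the type-2 contribution is bounded below by excluding, via Fact~\ref{solutions 2x=g}, the pairs $(x,s)$ with $2x=s$ or $2x=s+s'$ for some $s'\in S\cup(-S)$; the paper phrases this as counting bad pairs $(x,s-x)$ rather than summing degrees, which is only a cosmetic difference. Your explicit verification that $s-x$ stays in $A=g+H$ and that the relevant Schur triples are distinct makes rigorous details the paper leaves implicit from the normalisation preceding the claim.
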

\begin{proof}
    By Claim~\ref{degree distinct link graph}, we only need to prove that $e_2(\Gamma)\geq \frac{|S||A|}{2}-|S|(|S\cup -S|+1)2^r$.
    
Let $X$ denote the number of  pairs $(x,s-x)$ where $x \in A$ and $s \in S$ such that either (i) $(x,s-x)$ is a type 1 edge in $\Gamma$ or (ii) $x=s-x$. Note that $e_2 (\Gamma) \geq  \frac{|S||A|}{2}-X$.

   Let $x\in A$ and $s\in S$. Then $(x,s-x)$ is a type 1 edge in $\Gamma$ if and only if there exists $s'\in S\cup -S$ such that $x-(s-x)=s'$, or equivalently, $2x=s+s'$. By Fact~\ref{solutions 2x=g}, we deduce there are at most $|S||S\cup -S|2^r$ such edges in $\Gamma$.

Similarly, given any $s \in S$, Fact~\ref{solutions 2x=g} implies that there are at most $2^r$ solutions $x \in A$ to $x=s-x$. Together with the last paragraph, this implies that $X \leq |S||S\cup -S|2^r +|S|2^r$, as required.
\end{proof}
The next claim will be used to bound
$f_{\max}^{\star,2}(G)$.
\begin{claim}\label{type 2}
    For any $S\subseteq B$ with $|S|> 87^2$, we have $\mis(\Gamma)\leq2^{(1/4-\eps)n-2^{r-2}}$.
\end{claim}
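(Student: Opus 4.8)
For any distinct sum-free $S \subseteq B$ with $|S| > 87^2$, we have $\mis(\Gamma) \leq 2^{(1/4-\eps)n - 2^{r-2}}$.

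Let me understand the setup.The plan is to apply Lemma~\ref{dense and regular graphs} to $\Gamma=L_S^\star[A]$, using that the threshold $|S|>87^2$ forces $\Gamma$ to be dense and roughly $2$-regular, which is exactly enough to push $\mis(\Gamma)$ below $2^{n/4}$. Recall that in the case $|A|>4n/9$ the reduction preceding Claim~\ref{degree distinct link graph} lets us assume $A=\tilde{A}\oplus K$, so $v(\Gamma)=|A|=n/2$. By Claim~\ref{degree distinct link graph}, $\delta(\Gamma)\geq|(S\cup-S)\setminus\{0\}|\geq|S|-1>87^2-1$, hence $\delta(\Gamma)\geq 87^2$ and $b:=\sqrt{\delta(\Gamma)}\geq 87$. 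The same claim gives $\Delta(\Gamma)\leq 2\delta(\Gamma)+1$, and since $\delta(\Gamma)\geq 87^2$ we have $2\delta(\Gamma)+1\leq(2+87^{-2})\delta(\Gamma)$. Thus $\Gamma$ satisfies the hypotheses of Lemma~\ref{dense and regular graphs} with the fixed constant $k:=2+87^{-2}$.

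Applying Lemma~\ref{dense and regular graphs} with $v(\Gamma)=n/2$ in the role of its ``$n$'' yields
$$\mis(\Gamma)\leq \sum_{0\leq i\leq (n/2)/b}\binom{n/2}{i}\cdot 3^{\left(\frac{k}{k+1}\right)\frac{n}{6}+\frac{n}{3b}}.$$
Both the summation range $(n/2)/b$ and the exponent $\frac{n}{3b}$ are decreasing in $b$, so since the actual value satisfies $b\geq 87$ I may bound the right-hand side by replacing $b$ with $87$. As $\frac{k}{k+1}<0.66673$, the exponential factor is then at most $3^{0.1150\,n}\leq 2^{0.1822\,n}$. For the binomial sum I would invoke the standard entropy estimate $\sum_{0\leq i\leq\lambda N}\binom{N}{i}\leq 2^{H(\lambda)N}$ (valid for $\lambda\leq 1/2$, with $H$ the binary entropy) taking $\lambda=1/87$ and $N=n/2$, giving a bound of $2^{H(1/87)\cdot n/2}\leq 2^{0.0453\,n}$. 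Multiplying the two estimates gives $\mis(\Gamma)\leq 2^{0.228\,n}$.

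It then remains to convert this into the stated form. Since $n/2^{r}\geq C$ gives $2^{r-2}\leq n/(4C)$, and the hierarchy gives $\eps\ll 1/C$, the numerical gap $\tfrac14-0.228>0.02$ comfortably exceeds $\eps+1/(4C)$. Hence $0.228\,n\leq(\tfrac14-\eps)n-2^{r-2}$, which is precisely $\mis(\Gamma)\leq 2^{(1/4-\eps)n-2^{r-2}}$, as required.

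The main obstacle is ensuring the ratio $\Delta(\Gamma)/\delta(\Gamma)$ is essentially $2$ rather than $3$: only with $k\to 2$ does the leading term become $3^{(2/3)\cdot(n/2)/3}=3^{n/9}\approx 2^{0.176\,n}$, which sits below $2^{n/4}$ with room to spare, whereas the crude bound $\Delta\leq 3\delta$ would produce $3^{n/8}$ and leave no margin. This is exactly why one records the sharpened degree estimate $\Delta\leq 2\delta+1$ from Claim~\ref{degree distinct link graph}. The threshold $|S|>87^2$ plays the complementary role of guaranteeing $b=\sqrt{\delta(\Gamma)}\geq 87$, which makes both the binomial sum $2^{H(1/87)|A|}$ and the correction term $\tfrac{2|A|}{3b}$ small enough; a weaker threshold would inflate these error terms and the bound would no longer beat $2^{n/4}$.
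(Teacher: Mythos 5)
Your proof is correct and takes essentially the same route as the paper: both arguments apply Lemma~\ref{dense and regular graphs} to $\Gamma$ with $v(\Gamma)=n/2$ and $b=\sqrt{\delta(\Gamma)}\geq 87$, where the lower bound on $\delta(\Gamma)$ comes from $|S|>87^2$ via Claim~\ref{degree distinct link graph}. The only (harmless) quantitative differences are that you take $k=2+87^{-2}$ (exploiting $\Delta(\Gamma)\leq 2\delta(\Gamma)+1$ and $\delta(\Gamma)\geq 87^2$) where the paper simply uses $\Delta(\Gamma)\leq 3\delta(\Gamma)$ and $k=3$, and that you bound the binomial sum by the entropy estimate $2^{H(1/87)n/2}$ where the paper uses $\left(\frac{n}{174}+1\right)(87e)^{n/174}$; both choices land comfortably below $2^{(1/4-\eps)n-2^{r-2}}$ given the hierarchy $\eps\ll 1/C$ and $2^{r-2}\leq n/(4C)$.
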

\begin{proof}
     By Claim \ref{degree distinct link graph}, $\Delta(\Gamma)\leq 3\delta(\Gamma)$. We apply Lemma \ref{dense and regular graphs} with $k=3$. Noting that $v(\Gamma)=n/2$ and $b:=\sqrt{\delta(\Gamma)}\geq 87$, we thus obtain that
    $$\mis(\Gamma)\leq\sum_{0\leq i\leq n/174}\binom{n/2}{i}3^{\frac{n}{8}+\frac{n}{261}}\leq \left(\frac{n}{174}+1\right)(87e)^{\frac{n}{174}}3^{\frac{n}{8}+\frac{n}{261}}.$$
    The choice of $C$ and $\eps$, together with the fact that $n/2^r\geq C$ and $n$ is sufficiently large, ensures that $\left(\frac{n}{174}+1\right)(87e)^{\frac{n}{174}}3^{\frac{n}{8}+\frac{n}{261}}\leq 2^{(1/4-\eps)n-2^{r-2}}$, as required.
\end{proof}
The next claim will be used to bound
$f_{\max}^{\star,3}(G)$.

\begin{claim}\label{type 3}
    Consider any distinct sum-free subset $S\subseteq B$ with $2\leq|S|\leq 87^2$ and $S\ne\{0,s\}$ for every order 2 element $s\in G$.
    Then $\mis(\Gamma)\leq2^{(1/4-\eps)n-2^{r-2}}$.
\end{claim}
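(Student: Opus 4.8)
The plan is to apply the stability version of the Moon--Moser bound, Lemma~\ref{stability}, to $\Gamma:=L^\star_S[A]$. Recall that under our running assumptions $A=\Tilde A\oplus K$, so $v(\Gamma)=|A|=n/2$. Write $d:=|(S\cup-S)\setminus\{0\}|$. The first point I would establish is that the type 3 hypotheses force $d\geq 2$: if $d\leq 1$ then $S\cup-S\subseteq\{0,a\}$ with $a$ of order $2$, and since $|S|\geq 2$ this would mean $S=\{0,a\}$ with $a$ an order $2$ element, which is precisely the case excluded in the definition of type 3. (This is exactly the feature separating type 3 from the extremal type 6.) Consequently, Claim~\ref{degree distinct link graph} gives $\delta(\Gamma)\geq d\geq 2$ and $\Delta(\Gamma)\leq d+|S|\leq 2\cdot 87^2$.

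Next I would quantify the edge excess. By Claim~\ref{nb edges distinct link graph},
$$e(\Gamma)\geq\frac{d+|S|}{2}\cdot\frac n2-|S|(|S\cup-S|+1)2^r,$$
so setting $k:=e(\Gamma)-n/2$ for the number of edges above $v(\Gamma)$, we obtain $k\geq\frac{d+|S|-2}{4}\,n-O(2^r)$, which is linear in $n$ precisely because $d+|S|\geq 4$. Applying Lemma~\ref{stability} with $\Delta:=\Delta(\Gamma)$ and this excess yields
$$\mis(\Gamma)\leq 3^{\Delta/13}\cdot 3^{\,n/6-k/(13\Delta)}.$$

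The crux is then a short computation verifying that the saving $k/(13\Delta)$ is a uniform positive fraction of $n$ across the whole range $2\leq|S|\leq 87^2$. Here it is essential to keep the \emph{true} maximum degree $\Delta\leq d+|S|$ rather than the crude bound $2\cdot 87^2$, since otherwise the saving would collapse as $|S|$ grows. As $\Delta\leq d+|S|$ and $k>0$,
$$\frac{k}{13\Delta}\geq\frac{(d+|S|-2)\,n/4}{13(d+|S|)}-O(2^r)=\frac{n}{52}\Bigl(1-\frac{2}{d+|S|}\Bigr)-O(2^r)\geq\frac{n}{104}-O(2^r),$$
the last step using $d+|S|\geq 4$, with equality in the binding case $d=|S|=2$. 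Plugging this in gives $\mis(\Gamma)\leq 3^{\Delta/13}\cdot 3^{\,49n/312+O(2^r)}$, and the decisive numerical fact is $\tfrac{49}{312}\log_2 3<\tfrac14$ (indeed $\approx 0.2489$). Since $2^r\leq n/C$ with $C$ enormous renders the $O(2^r)$ correction negligible, and the prefactor $3^{\Delta/13}\leq 3^{2\cdot 87^2/13}$ is an absolute constant absorbed once $n\geq n_0$, the slender gap $\tfrac14-\tfrac{49}{312}\log_2 3$ leaves exactly enough room to conclude $\mis(\Gamma)\leq 2^{(1/4-\eps)n-2^{r-2}}$ for $\eps$ small enough, as guaranteed by the hierarchy. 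The main obstacle is precisely this bookkeeping: one must retain the $|S|$-dependence of $\Delta$ so that the saving does not degrade as $|S|$ increases, while simultaneously ensuring the error term $O(2^r)$ and the constant prefactor both stay below the narrow $O(n)$ gap — which is what pins down the enormous value of the constant $C$.
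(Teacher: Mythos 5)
Your proof is correct, and it rests on the same engine as the paper's: the edge-count bound of Claim~\ref{nb edges distinct link graph} fed into Lemma~\ref{stability}, ending with the same binding numerics (the saving $n/104$, the exponent $\tfrac{49}{312}\log_2 3 < \tfrac14$, the $O(2^r)$ errors controlled via $n/2^r\geq C$). Where you genuinely diverge is in how the crucial ``parameter at least $4$'' threshold is obtained. The paper splits into eight sub-cases and, in each of the seven exceptional configurations, constructs an explicit vertex with four distinct neighbours, thereby proving the \emph{true} maximum degree satisfies $\Delta(\Gamma)\geq 4$; it then computes the saving as $\tfrac{(\Delta-2)n}{52\Delta}$, minimised at $\Delta=4$. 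You instead observe that the type 3 hypothesis ($|S|\geq 2$ and $S\neq\{0,s\}$ for $s$ of order $2$) directly forces $d:=|(S\cup-S)\setminus\{0\}|\geq 2$, hence $d+|S|\geq 4$; since Claim~\ref{nb edges distinct link graph} bounds the edge count from below by $\tfrac{(d+|S|)|A|}{2}-O(2^r)$ while Claim~\ref{degree distinct link graph} bounds $\Delta(\Gamma)\leq d+|S|$ from above, the ratio $k/(13\Delta)$ can be estimated entirely in terms of $d+|S|$ (this is legitimate because $k>0$, so enlarging the denominator only decreases the quotient), and the case analysis evaporates. This is a genuine and welcome simplification of the proof of this particular claim. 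Two caveats. First, your numerical bound $\Delta(\Gamma)\leq d+|S|\leq 2\cdot 87^2$ should read $3\cdot 87^2$, since $d$ can be as large as $2|S|$; this is immaterial, as the prefactor $3^{\Delta/13}$ is an absolute constant absorbed by the hierarchy. Second, the paper's case analysis is not pure overhead: sub-cases (iv) and (vii) are cited again later, in the proof of Theorem~\ref{fms even groups2b} (see Claim~\ref{mdsfs generated by B}), so the paper's longer route produces intermediate facts that get reused, whereas under your streamlined argument those two special cases would have to be established separately at that point.
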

\begin{proof}
    Our first aim is to show that $\Delta (\Gamma) \geq 4$. We will prove this by considering  8 sub-cases:
\begin{enumerate}[label=(\roman*)]
    \item $|S|=4$ and 
    $S=\{0,s_1,s_2,s_3\}$ with $s_1,s_2,s_3$ having order 2.

    \item $|S|=3$ and 
    $S=\{s_1,s_2,s_3\}$ with $s_1,s_2,s_3$ having order 2.
    \item $|S|=3$ and 
    $S=\{s_1,s_2,-s_2\}$ with $s_1$ having order 2.
    \item $|S|=3$ and 
    $S=\{0,s_1,s_2\}$ with $s_1$ having order 2.
    \item $|S|=2$ and 
    $S=\{s,-s\}$.
    \item $|S|=2$ and 
    $S=\{s_1,s_2\}$ with $s_1$ having order 2 and $s_2 \neq 0$.
    \item $|S|=2$ and 
    $S=\{0,s\}$ with $s\ne -s$.
    \item Any other $S$ satisfying the hypothesis of the claim.
\end{enumerate} 
Set $\Delta:= \Delta (\Gamma)$.
One can check that in case (viii),  Claim~\ref{degree distinct link graph} implies that $\Delta\geq |(S\cup -S)\setminus \{0\}|\geq 4$.
For example, if $|S|=4$ and $S$ does not satisfy (i), then  $0 \notin S$ or $-S\setminus S \neq \emptyset$   (note that here we are using that $S$ is distinct sum-free). Then Claim~\ref{degree distinct link graph} ensures that $\Delta\geq 4$.

We can also check by hand that $\Delta\geq4$ in cases (i)--(vii). Indeed, in cases (i) and (ii) consider a vertex $x \in V(\Gamma)$ such that  $2x \not\in \{s_1,s_1-s_2,s_1-s_3,0\}$; note that such an $x \in V(\Gamma)$ exists by Fact~\ref{solutions 2x=g}
and as $|\Gamma|=n/2 >4 \cdot 2^r$. Then $x$ has 
 degree at least $4$ in $\Gamma$ since $x+s_1$, $x+s_2$, $x+s_3$ and $s_1-x$ are distinct neighbours of $x$ in $\Gamma$.
 In case (iii) 
 consider  an $x \in V(\Gamma)$ such that
  $2x \not\in \{s_1, s_1-s_2,s_1+s_2,0\}$; again such an $x \in V(\Gamma)$ exists by Fact~\ref{solutions 2x=g}
and as $|\Gamma|=n/2 >4 \cdot 2^r$. Then $x$ has 
 degree at least $4$ in $\Gamma$ since $x+s_1$, $x+s_2$, $x-s_2$ and $s_1-x$ are distinct neighbours of $x$.
 In cases (iv) and (vi) 
 consider  an $x \in V(\Gamma)$ such that
 $2x \not\in \{0,s_1-s_2,s_2-s_1,  s_1, s_2\}$; then $x+s_1$, $x+s_2$, $s_1-x$ and $s_2-x$ are distinct neighbours of $x$.
 In case (v) consider an $x \in V(\Gamma)$ such that
  $2x\not\in \{0,-2s,2s, s, -s\}$; then $x+s$, $x-s$, $s-x$ and $-s-x$ are distinct neighbours. Finally, in case (vii)  consider  an $x \in V(\Gamma)$ such that $2x\not\in \{2s,0,s,-s\}$; then $x+s$, $x-s$, $-x$ and $s-x$ are distinct neighbours of $x$.

  \smallskip
  
Now we have proved that $\Delta \geq 4$, we can complete the proof of the claim.
By Claim~\ref{nb edges distinct link graph} we have $e(\Gamma)\geq \frac{\Delta n}{4}-|S|(|S\cup -S|+1)2^r$. Applying Lemma~\ref{stability} with $k:=\lceil{{(\Delta-2)n}/{4}-|S|(|S\cup -S|+1)2^r} \rceil $, we have
$$\mis(\Gamma)\leq 3^{\frac{\Delta}{13}}\cdot 3^{\frac{n}{6}-\frac{(\Delta-2)n}{52\Delta}+\frac{|S|(|S\cup -S|+1)\cdot2^r}{13\Delta}}.$$
Since $|S|(|S\cup -S|+1)\leq 87^2\cdot(2\cdot 87^2+1)$ and $4\leq\Delta\leq3\cdot87^2$ by Claim~\ref{degree distinct link graph}, we have
$$\mis(\Gamma)\leq 3^{\frac{3\cdot87^2}{13}}\cdot 3^{\left(\frac{49}{312}\frac{n}{2^r}+\frac{87^2\cdot(2\cdot 87^2+1)}{52}\right) \cdot2^r}.$$
The choice of $\eps$ and $C$ implies that $\mis(\Gamma)\leq2^{(1/4-\eps)n-2^{r-2}}$, since $n/2^r\geq C$.\footnote{This is the  place in the argument where we use the precise choice of $C$.}
\end{proof}
The next claim will be used to bound
$f_{\max}^{\star,4}(G)$.
\begin{claim}\label{type 4}
    Let $S=\{s\}$ with $s\ne 0$. 
     Then $\mis(\Gamma)\leq 2^{(1/4-\eps)n-2^{r-2}}$.
\end{claim}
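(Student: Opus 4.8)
The plan is to analyse the distinct link graph $\Gamma=L^\star_{\{s\}}[A]$ directly. Recall that under our standing assumptions we may take $A=g+H'$ to be a coset of an index-$2$ subgroup $H'\le G$ with $s\in H'$ and $s\ne 0$, so that each $x\in A$ has type $1$ neighbours $x+s,\,x-s$ and a single potential type $2$ neighbour $s-x$. Since $x\mapsto s-x$ is an involution of $A$, the type $2$ edges form a matching. By Claim~\ref{degree distinct link graph} we have $\Delta(\Gamma)\le 3$, and by Claim~\ref{nb edges distinct link graph} (with $|A|=n/2$, $|S|=1$ and $|S\cup-S|\le 2$) the edge surplus satisfies $e(\Gamma)-\tfrac12 v(\Gamma)\ge \tfrac{n}{4}-O(2^r)$, where $v(\Gamma)=|A|=n/2$. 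Thus $\Gamma$ carries a \emph{linear} edge surplus over $v(\Gamma)/2$, and the strategy is to combine this with (near) triangle-freeness through Lemma~\ref{almosttrianglefree}, the one genuinely exceptional case being $\mathrm{ord}(s)=3$.

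First I would bound the number of triangles of $\Gamma$ when $3s\ne 0$. Because the type $2$ edges form a matching, any triangle uses at most one type $2$ edge and hence at least two type $1$ edges; three type $1$ edges would give a triangle inside the type $1$ subgraph, whose components are the cycles $x\sim x\pm s$ on the cosets of $\langle s\rangle$, and this forces $3s=0$. So when $3s\ne 0$ every triangle consists of two type $1$ edges meeting at an apex $z$ together with the type $2$ edge joining $z-s$ and $z+s$; being a type $2$ edge forces $(z-s)+(z+s)=2z=s$. By Fact~\ref{solutions 2x=g} there are at most $2^r$ such $z$, and each determines its triangle $\{z-s,z,z+s\}$, so $\Gamma$ has at most $2^r$ triangles.

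With this in hand, suppose $\mathrm{ord}(s)\ne 3$ and set $T:=\{z\in A:2z=s\}$, so that $|T|\le 2^r$ and $\Gamma\setminus T$ is triangle-free (when $\mathrm{ord}(s)=2$ the type $1$ edges also form a matching, so $\Gamma$ is already triangle-free). Applying Lemma~\ref{almosttrianglefree} with $D=3$, $v(\Gamma\setminus T)=n/2-O(2^r)$ and edge surplus $k\ge n/4-O(2^r)$ yields
$$\mis(\Gamma)\le 2^{\,n/4-n/3600+O(2^r)}.$$
Since $n/2^r\ge C$ and the hierarchy gives $\eps\ll 1/C$, both $\eps n$ and the $O(2^r)$ error (including $2^{r-2}$) are dominated by $n/3600$, so this is at most $2^{(1/4-\eps)n-2^{r-2}}$, as required.

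The remaining, and hardest, case is $\mathrm{ord}(s)=3$, where triangular prisms arise as components and are far too numerous for Lemma~\ref{almosttrianglefree} to apply; here I would identify the components exactly. On each coset of $\langle s\rangle=\{0,s,-s\}$ the type $1$ edges form a triangle, and the reflection $x\mapsto s-x$ either fixes a coset --- producing a $K_3$ component, which occurs only when $2x\in\langle s\rangle$ and so for at most $3\cdot 2^r$ vertices by Fact~\ref{solutions 2x=g} --- or matches two cosets into a triangular prism $C_3\square K_2$, for which a direct check gives $\mis(C_3\square K_2)=6$. Multiplying over components then gives
$$\mis(\Gamma)\le 3^{2^r}\cdot 6^{\,n/12}=2^{(\log_2 6)\,n/12+O(2^r)},$$
and since $\tfrac{1}{12}\log_2 6<\tfrac14$ and $n/2^r\ge C$ this again beats $2^{(1/4-\eps)n-2^{r-2}}$. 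The main obstacle is precisely this order-$3$ situation: the exact value $\mis(C_3\square K_2)=6$ together with the verification that the rogue $K_3$ components account for only $O(2^r)$ vertices are what make the prism count go through, and it is exactly the presence of these unavoidable triangles (possible precisely when $3\mid n$) that blocks a single uniform application of Lemma~\ref{almosttrianglefree}.
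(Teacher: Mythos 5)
Your proof is correct and follows essentially the same strategy as the paper's: isolate $\mathrm{ord}(s)=3$ as the exceptional case, handle it by the exact component decomposition into triangular prisms (with $\mis(C_3\square K_2)=6$) plus $O(2^r)$ rogue triangles, and treat everything else via a triangle hitting set of size at most $2^r$ combined with Lemma~\ref{almosttrianglefree}. There are two minor differences worth recording. First, the paper splits off $\mathrm{ord}(s)=2$ as a third case and computes its components exactly (four-cycles plus a few edges, giving the stronger bound $2^{n/8+2^{r-1}}$), whereas you fold it into the triangle-free analysis; this is fine, since the edge surplus $n/4-O(2^r)$ you establish still comfortably beats $2^{(1/4-\eps)n-2^{r-2}}$ under the hierarchy $\eps\ll 1/C$ and $n/2^r\geq C$. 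Second, where the paper invokes an adaptation of \cite[Claim 3.9]{ls} (with proof omitted) for the existence of the hitting set $A_t$, you prove the needed structure outright: since type 2 edges form a matching, a triangle uses at least two type 1 edges, and three type 1 edges would force $3s=0$, so for $3s\neq 0$ every triangle has an apex $z$ with $2z=s$, whence Fact~\ref{solutions 2x=g} bounds the triangles by $2^r$ and $T=\{z\in A: 2z=s\}$ hits them all. This makes your argument self-contained, at the cost only of a slightly weaker (but entirely sufficient) numerical bound in the order-2 case.
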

\begin{proof}
    Let $\ell$ denote the order of $s$. We first handle the case when $\ell \in\{2,3\}$.
    \begin{enumerate}[label=(\roman*)]
    \item $\ell=2$. In this case, if $x \in V(\Gamma)=A$ is such that $2x\not \in\{0,s\}$, then the component of $\Gamma $ containing $x$ is precisely a 
     $4$-cycle $\{x,x+s,-x,s-x\}$.
     If $x \in V(\Gamma)$ is such that $2x\in\{0,s\}$,
    the  component containing $x$ is  an edge $\{x,x+s\}$. 
    Let $I_2:=\{x\in V(\Gamma) \ | \ 2x\in\{0,s\}\}$. By Fact~\ref{solutions 2x=g}, $|I_2|\leq 2^{r+1}$. Then, $\mis(\Gamma)\leq 2^{\frac{|A|-|I_2|}{4}}\cdot 2^{|I_2|/2}\leq 2^{n/8+2^{r-1}}$. The choice of $\eps$ and $C$ implies that $\mis(\Gamma)\leq 2^{(1/4-\eps)n-2^{r-2}}$, since $n/2^r\geq C$.
    
    \item $\ell=3$. In this case, 
    if $x \in V(\Gamma)$ is such that $2x\not \in\{0,s, 2s\}$, then the component of $\Gamma $ containing $x$ is precisely a 
     $C_3\square K_2$;  a perfect matching between the triangles $\{x,x+s,x+2s\}$ and $\{-x+s,-x,-x+2s\}$. If $x \in V(\Gamma)$ such that $2x\in\{0,s,2s\}$, then the component of $\Gamma $ containing $x$ is a triangle $\{x,x+s,x+2s\}$. Let $I_3:=\{x\in V(\Gamma) \ | \ 2x\in\{0,s,2s\}\}$. By Fact~\ref{solutions 2x=g}, $|I_3|\leq 3\cdot2^r$. Noting that $C_3\square K_2$ contains precisely $6$ maximal independent sets, we have that
     $$\mis(\Gamma)\leq 6^{\frac{|A|-|I_3|}{6}}\cdot 3^{|I_3|/3}\leq 6^{n/12-2^{r-1}}\cdot3^{2^r}.$$
    The choice of $\eps$ and $C$ implies that $\mis(\Gamma)\leq2^{(1/4-\eps)n-2^{r-2}}$, since $n/2^r\geq C$.
\end{enumerate}
Now  assume $\ell>3$. Then we can adapt \cite[Claim 3.9]{ls} to our distinct sum-free set setting: there exists a subset $A_t\subset A=V(\Gamma)$ with $|A_t|\leq 2^r$ that intersects all triangles in $\Gamma$. The proof is exactly the same as in the non-distinct setting (see \cite[Claim 3.9]{ls}), so we omit it.
Let $A_t$ be such a set, and let $\Gamma':=\Gamma\setminus  A_t$. By Claim \ref{degree distinct link graph}, every vertex in $\Gamma$ has degree at most 3, and by Claim \ref{nb edges distinct link graph}, $e(\Gamma)\geq \frac{3|A|}{2}-3\cdot2^r$, and thus, 
$$e(\Gamma')\geq e(\Gamma)-3|A_t|\geq \frac{3|A|}{2}-6\cdot 2^r.$$
By Lemma \ref{almosttrianglefree}, with $D:=3$ and $k:=|A|-6\cdot 2^r$, we obtain
$$\mis(\Gamma)\leq 2^{\frac{n}{4}-\frac{n/2-6\cdot2^r}{900}+2\cdot2^r}=2^{\frac{449}{1800}n+\frac{301}{150}\cdot2^r}.$$
The choice of $\eps$ and $C$ implies that $\mis(\Gamma)\leq2^{(1/4-\eps)n-2^{r-2}}$, since $n/2^r\geq C$.
\end{proof}
With the previous claims at hand, it is now straightforward to bound the number
of type 0--4 maximal distinct sum-free subsets of $G$.
\begin{claim}\label{upper bound type 0-4}
    $f_{\max}^{\star,0}(G)+f_{\max}^{\star,1}(G)+f_{\max}^{\star,2}(G)+f_{\max}^{\star,3}(G)+f_{\max}^{\star,4}(G)\leq \eta\cdot 2^{n/4-2^{r-2}}$.
\end{claim}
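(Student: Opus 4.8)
The plan is to bound each of $f_{\max}^{\star,0}(G),\ldots,f_{\max}^{\star,4}(G)$ separately and then sum the bounds, showing the total is dominated by $2^{n/4-2^{r-2}}$ with a coefficient that can be made smaller than $\eta$. The key observation is that Claims~\ref{type 1}, \ref{type 2}, \ref{type 3} and \ref{type 4} each give, for the relevant choice of $S$, a per-container link-graph bound of the form $\mis(\Gamma)\leq 2^{(1/4-\eps)n-2^{r-2}}$. So the strategy is purely one of assembling these uniform per-$S$, per-container estimates with the crude counting of containers and of choices of $S$ supplied by Lemma~\ref{distinct containers} and the two-step extension principle.

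\textbf{Step 1: control the number of containers and the number of choices of $S$.} By Lemma~\ref{distinct containers}(iii), $|\mathcal F|\leq 2^{\delta n}$, and within each container $F=A\cup B$ we have $|B|\leq\delta n$, so the number of distinct sum-free sets $S\subseteq B$ is at most $2^{|B|}\leq 2^{\delta n}$. Hence for each of types $1$--$4$, the total count is at most $|\mathcal F|\cdot 2^{|B|}\cdot(\text{per-}S\text{ bound on }\mis(\Gamma))\leq 2^{2\delta n}\cdot 2^{(1/4-\eps)n-2^{r-2}}$. For type $0$ we already have $f_{\max}^{\star,0}(G)\leq 2^{\delta n}$ noted in the text. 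Thus each of the five quantities is bounded by $2^{2\delta n}\cdot 2^{(1/4-\eps)n-2^{r-2}}$ (type $0$ is even smaller, being swamped by $2^{(1/4-\eps)n-2^{r-2}}$ once we check $2^{\delta n}\leq 2^{(1/4-\eps)n-2^{r-2}}$, which holds since $n/2^r\geq C$ and $\delta\ll\eps$).

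\textbf{Step 2: absorb the $2^{2\delta n}$ factor into the exponent gap and conclude.} Summing the five bounds gives
$$
\sum_{i=0}^{4} f_{\max}^{\star,i}(G)\leq 5\cdot 2^{2\delta n}\cdot 2^{(1/4-\eps)n-2^{r-2}}
= 5\cdot 2^{(1/4-\eps+2\delta)n-2^{r-2}}.
$$
Since $\delta\ll\eps$ we have $2\delta<\eps/2$ (say), so the exponent is at most $(1/4-\eps/2)n-2^{r-2}$. It remains to show $5\cdot 2^{(1/4-\eps/2)n-2^{r-2}}\leq \eta\cdot 2^{n/4-2^{r-2}}$, i.e.\ $5\cdot 2^{-(\eps/2)n}\leq\eta$. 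This holds for all sufficiently large $n$ because $\eps$ is a fixed positive constant (independent of $n$) while $\eta$ is fixed in advance; formally it is guaranteed by the hierarchy $1/n_0\ll\delta\ll\eps\ll\eta$, since $n\geq n_0$ makes $2^{-(\eps/2)n}$ smaller than $\eta/5$.

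\textbf{Main obstacle.} The individual per-$S$ bounds are the substantive content, but those are exactly Claims~\ref{type 1}--\ref{type 4}, which we may assume. So the only genuine care needed in this particular claim is bookkeeping: verifying that the multiplicative overhead $2^{2\delta n}$ from enumerating containers and choices of $S$ is strictly smaller than the saved factor $2^{\eps n}$ in the per-$S$ estimates, and that the constant factor ($5$, plus the handling of type $0$) is absorbed by the slack between $2^{-(\eps/2)n}$ and $\eta$. The hierarchy of constants $\delta\ll\eps\ll\eta$ is precisely engineered to make each of these comparisons go through, so the proof is a short computation rather than a new idea; the one point to state explicitly is that $\eps$ does not depend on $n$, which is what lets $2^{-(\eps/2)n}\to 0$ beat the fixed target $\eta$.
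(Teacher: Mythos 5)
Your proof is correct and takes essentially the same approach as the paper's: both bound each of types 1--4 by multiplying the container count $2^{\delta n}$, the count $2^{\delta n}$ of choices of $S\subseteq B$, and the uniform per-$S$ bound $\mis(\Gamma)\leq 2^{(1/4-\eps)n-2^{r-2}}$ from Claims~\ref{type 1}--\ref{type 4}, handle type 0 via the trivial $2^{\delta n}$ bound, and then absorb all overhead using the hierarchy $\delta\ll\eps\ll\eta$. Your write-up merely makes explicit the final absorption step ($5\cdot 2^{-(\eps/2)n}\leq\eta$ for $n\geq n_0$) that the paper compresses into ``As $\delta \ll \eps \ll \eta$, the claim now follows.''
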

\begin{proof}
    We already saw that $f_{\max}^{\star,0}(G)\leq 2^{\delta n}$.  By Lemma~\ref{distinct containers}, there are at most $2^{\delta n}$ containers $F\in \mathcal F$. For each choice of $F$, there are at most $2^{\delta n}$ choices for $S\subseteq B$. 
    For any $S$ as in the definitions of types 1--4,
    Claims~\ref{type 1}, \ref{type 2}, \ref{type 3} and \ref{type 4} imply that $\mis(\Gamma)\leq 2^{(1/4-\eps)n-2^{r-2}}$. Thus, by Lemma~\ref{extension link graph},
    $$f_{\max}^{\star,i}(G)\leq 2^{\delta n}\cdot 2^{\delta n}\cdot  2^{(1/4-\eps)n-2^{r-2}}=2^{(1/4+2\delta-\eps)n-2^{r-2}}$$ for each $i \in [4]$. As $\delta \ll \eps \ll \eta$, the claim now follows.

\end{proof}
The next two claims provide an upper bound on the number of type 5 and 6 maximal distinct sum-free subsets of $G$.
\begin{claim}\label{upper bound type 5}
    $f_{\max}^{\star,5}(G)\leq (2^{r_1}-1)2^{n/4}+(2^r-2^{r_1})2^{n/4-2^{r-2}}$.
\end{claim}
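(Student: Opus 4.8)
The plan is to exploit the fact that for type~5 sets the choice $S=\{0\}$ is forced, so the only freedom lies in the coset $\tilde{A}$ defining the (modified) container together with the maximal independent set used to extend $\{0\}$. Recall that in this regime we may assume $A=\tilde{A}\oplus K$, where $\tilde{A}$ is the non-identity coset of an index~2 subgroup $H$ of $G':=\Z_{2^{\alpha_1}}\oplus\ldots\oplus\Z_{2^{\alpha_{r_1}}}\oplus\Z_2^{r_2}$. By Lemma~\ref{extension link graph}, every type~5 maximal distinct sum-free subset of $G$ has the form $\{0\}\cup I$ with $I$ a maximal independent set of $\Gamma:=L_{\{0\}}^\star[A]$. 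Distinct such sets sharing a common coset $\tilde A$ yield distinct $I$, so the number arising from a fixed coset is at most $\mis(L_{\{0\}}^\star[\tilde A\oplus K])$, whence
$$f_{\max}^{\star,5}(G)\leq \sum_{\tilde A}\mis\bigl(L_{\{0\}}^\star[\tilde A\oplus K]\bigr),$$
the sum ranging over the non-identity cosets $\tilde A$ of index~2 subgroups $H\leq G'$. (A set arising from several cosets is merely counted more than once, so this remains a valid upper bound.)

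First I would pin down the structure of $\Gamma$. Since $S\cup(-S)=\{0\}$, a type~1 edge $x-y=0$ would force $x=y$, so $\Gamma$ has no type~1 edges; and since $|S|=1$ there are no loops. The only edges are type~2 edges with $x+y=0$, i.e.\ pairs $\{x,-x\}$, present exactly when $x\neq-x$. As $-A=A$, the graph $\Gamma$ is therefore a perfect matching on $\{x\in A:2x\neq0\}$ together with isolated vertices at the order~2 elements of $A$. Writing $t$ for the number of order~2 elements of $A$, a maximal independent set must contain every isolated vertex and exactly one endpoint of each matching edge, so $\mis(\Gamma)=2^{(|A|-t)/2}=2^{(n/2-t)/2}$.

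The key computation is to evaluate $t$ in terms of the rank of $H$. Since $|K|$ is odd, an element $(\tilde a,k)\in\tilde A\oplus K$ has order~2 if and only if $k=0$ and $2\tilde a=0$, so $t=|\tilde A\cap G'[2]|$, where $G'[2]$ is the $2^r$-element $2$-torsion subgroup of $G'$. If $H$ has rank $r$, then by Fact~\ref{nb order 2 elements} it contains every order~2 element of $G'$, so $\tilde A$ contains none and $t=0$, giving $\mis(\Gamma)=2^{n/4}$. If instead $H$ has rank $r-1$, then $H\cap G'[2]$ has index~2 in $G'[2]$, and its non-identity coset (which lies in $\tilde A$) accounts for $t=2^{r-1}$ order~2 elements, giving $\mis(\Gamma)=2^{(n/2-2^{r-1})/2}=2^{n/4-2^{r-2}}$.

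Finally I would invoke Lemma~\ref{nb subgroups of index 2 with fixed rank}, which says $G'$ has exactly $2^{r_1}-1$ index~2 subgroups of rank $r$ and $2^r-2^{r_1}$ of rank $r-1$. Substituting the two values of $\mis(\Gamma)$ into the displayed sum yields
$$f_{\max}^{\star,5}(G)\leq (2^{r_1}-1)2^{n/4}+(2^r-2^{r_1})2^{n/4-2^{r-2}},$$
as claimed. The only genuinely delicate point is the order~2 count $t=|\tilde A\cap G'[2]|$ and its dependence on $\mathrm{rank}(H)$; once Fact~\ref{nb order 2 elements} pins this down, the remainder is the bookkeeping of combining $\mis(\Gamma)$ with the subgroup count of Lemma~\ref{nb subgroups of index 2 with fixed rank}, and the overcounting across containers is harmless since only an upper bound is required.
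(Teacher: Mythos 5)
Your proposal is correct and follows essentially the same route as the paper's own proof: sum $\mis(L^\star_{\{0\}}[\tilde A\oplus K])$ over the non-identity cosets $\tilde A$, identify the link graph as a matching plus isolated vertices at the order-2 elements, split according to whether the index-2 subgroup has rank $r$ or $r-1$ (giving $2^{n/4}$ and $2^{n/4-2^{r-2}}$ maximal independent sets respectively), and count the cosets of each rank via Lemma~\ref{nb subgroups of index 2 with fixed rank}. Your explicit computation of the order-2 count $t=|\tilde A\cap G'[2]|$ via Fact~\ref{nb order 2 elements} is just a slightly more detailed writing-out of the step the paper states directly.
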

\begin{proof}
    Recall we  assume that each relevant container is of the form $F=A\cup B$ where $A=\Tilde{A}\oplus K$ with $\Tilde{A}$ a coset (not containing the zero element) associated to a subgroup of index 2 in $\Z_{2^{\alpha_1}}\oplus\ldots\oplus\Z_{2^{\alpha_{r_1}}}\oplus\Z_2^{r_2}$.
    Moreover, for type 5 maximal distinct sum-free sets, $S=\{0\}$; so to upper bound $f_{\max}^{\star,5}(G)$, it suffices to sum up the number of maximal independent sets in all distinct link graphs of the form $L_{\{0\}}^\star[\Tilde{A}\oplus K]$. 
    Let $\Tilde{A}$ be a coset (not containing the zero element) associated to a subgroup of index 2 in $\Z_{2^{\alpha_1}}\oplus\ldots\oplus\Z_{2^{\alpha_{r_1}}}\oplus\Z_2^{r_2}$, and set $\Gamma:=L_{\{0\}}^\star[\Tilde{A}\oplus K]$. We distinguish two cases depending on the structure of $\Tilde{A}$.
    \begin{enumerate}[label=(\roman*)]
    \item $\Tilde{A}$ is a coset associated to a subgroup of index 2 with rank $r-1$. In this case, $\Gamma$ consists of
    a set of isolated vertices $I:=\{x\in A \ | \ 2x=0\}$ together with 
    a matching where edges are of the form $\{x, -x\}$ for $2x\ne0$. Since $\Tilde{A}$ is a coset associated to a subgroup of index 2 with rank $r-1$, we have $|I|= 2^{r-1}$. Thus, $\mis(\Gamma)= 2^{\frac{|A|-|I|}{2}}= 2^{n/4-2^{r-2}}$. By Lemma \ref{nb subgroups of index 2 with fixed rank}, there are $2^r-2^{r_1}$ such sets $\Tilde{A}$; thus such link graphs give rise to at most $(2^r-2^{r_1})2^{n/4-2^{r-2}}$ type~5 maximal distinct sum-free sets.
    
    \item $\Tilde{A}$ is a coset associated to a subgroup of index 2 with rank $r$. In this case, there are no elements in $A=\Tilde{A}\oplus K$ such that $2x=0$. Then, $\Gamma$ is a perfect matching and $\mis(\Gamma)=2^{n/4}$. By Lemma \ref{nb subgroups of index 2 with fixed rank}, there are $2^{r_1}-1$ such sets $\Tilde{A}$;  thus such link graphs give rise to at most  $(2^{r_1}-1)2^{n/4}$ type 5 maximal distinct sum-free sets.
\end{enumerate}
Summing over all the possible sets $A$, we obtain $f_{\max}^{\star,5}(G)\leq (2^{r_1}-1)2^{n/4}+(2^r-2^{r_1})2^{n/4-2^{r-2}}$.
\end{proof}

\begin{claim}\label{upper bound type 6}
    $f_{\max}^{\star,6}(G)\leq (2^{2r-1}+2^{r+r_1-1}-2^{r+1}+1)2^{n/4}$.
\end{claim}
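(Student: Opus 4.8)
The plan is to mirror the proof of Claim~\ref{upper bound type 5}: reduce $f_{\max}^{\star,6}(G)$ to a sum of $\mis$ values over distinct link graphs, analyse the structure of each such graph precisely, and then count how many graphs occur. As in Claim~\ref{upper bound type 5}, we may assume every relevant container has the form $F=A\cup B$ with $A=\Tilde{A}\oplus K$, where $\Tilde{A}$ is a coset (not containing $0$) of an index $2$ subgroup $H$ of $G_2:=\Z_{2^{\alpha_1}}\oplus\ldots\oplus\Z_{2^{\alpha_{r_1}}}\oplus\Z_2^{r_2}$, and $A\cap B=\emptyset$. For type $6$ sets we have $S=\{0,s\}$ with $s$ an order $2$ element. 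Since $|K|$ is odd, $s$ lies in $G_2$, and since $A\cap B=\emptyset$ forces $s\notin A$, the element $s$ must in fact lie in $H$. Hence, via Lemma~\ref{extension link graph}, it suffices to bound $\sum\mis(L_{\{0,s\}}^\star[\Tilde{A}\oplus K])$, where the sum ranges over all pairs consisting of an index $2$ subgroup $H\leq G_2$ (with its nonzero coset $\Tilde{A}$) and an order $2$ element $s\in H$.

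The first key step is to pin down the structure of $\Gamma:=L_{\{0,s\}}^\star[A]$ for such a pair. A direct check of the definition (using $s=-s$) shows the neighbours of a vertex $x\in A$ are exactly $-x$, $s-x$ and $x+s$; equivalently, the connected components of $\Gamma$ are the orbits of the Klein four-group generated by the involutions $x\mapsto -x$ and $x\mapsto s-x$ acting on $A$. For a generic $x$ (namely when $2x\notin\{0,s\}$) this orbit is $\{x,-x,s-x,x+s\}$, four distinct vertices which I will verify span a $K_4$. The orbit degenerates into a single edge in exactly two situations: when $2x=0$ it becomes $\{x,x+s\}$, and when $2x=s$ it becomes $\{x,-x\}$; in both cases the component is a $K_2$. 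Writing $a$ and $b$ for the numbers of vertices with $2x=0$ and $2x=s$ respectively, and using $\mis(K_4)=4$ and $\mis(K_2)=2$, the contributions telescope perfectly:
$$\mis(\Gamma)=2^{a/2}\cdot 2^{b/2}\cdot 4^{(|A|-a-b)/4}=2^{|A|/2}=2^{n/4},$$
\emph{independent} of the pair $(\Tilde{A},s)$. The main work, and the only real obstacle, is this structural analysis: one must check that the four generic orbit elements genuinely induce a complete graph and that the two degenerate families each reduce to a $K_2$. Note that Fact~\ref{solutions 2x=g} bounds $a,b\leq 2^{r+1}$, but the precise values are irrelevant, since the exponent above always simplifies to $|A|/2$.

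It then remains to count the admissible pairs. By Lemma~\ref{nb subgroups of index 2 with fixed rank} there are $2^{r_1}-1$ index $2$ subgroups $H\leq G_2$ of rank $r$ and $2^r-2^{r_1}$ of rank $r-1$. By Fact~\ref{nb order 2 elements}, a rank $\rho$ subgroup of a $2$-group contains exactly $2^\rho-1$ order $2$ elements (and a rank $r$ subgroup of $G_2$ contains \emph{all} order $2$ elements of $G_2$); thus a rank $r$ subgroup admits $2^r-1$ choices of $s\in H$, and a rank $r-1$ subgroup admits $2^{r-1}-1$ choices. Since each pair contributes $\mis(\Gamma)=2^{n/4}$, and since every type $6$ maximal distinct sum-free set is generated by at least one such pair, we conclude
$$f_{\max}^{\star,6}(G)\leq\left[(2^{r_1}-1)(2^r-1)+(2^r-2^{r_1})(2^{r-1}-1)\right]2^{n/4}.$$
Expanding the bracket gives $2^{2r-1}+2^{r+r_1-1}-2^{r+1}+1$, which is precisely the claimed bound.
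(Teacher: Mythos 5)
Your proof is correct and takes essentially the same route as the paper: the paper likewise reduces to summing $\mis(L^{\star}_{\{0,s\}}[\Tilde{A}\oplus K])$ over admissible pairs, gets $\mis(\Gamma)=2^{n/4}$ from the same $K_4$/$K_2$ component decomposition (which it cites from the proof of Proposition~\ref{fms even groups2c} rather than re-deriving), and performs the identical rank-$r$ versus rank-$(r-1)$ count of pairs via Lemma~\ref{nb subgroups of index 2 with fixed rank}. Your observation that $s$ must lie in $H$ is just a rephrasing of the paper's count of order-$2$ elements not belonging to $A$, so the two counts coincide term by term.
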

\begin{proof}
    As in Claim~\ref{upper bound type 5}, we  assume that each relevant container is of the form $F=A\cup B$ where $A=\Tilde{A}\oplus K$ with $\Tilde{A}$ a coset (not containing the zero element)  associated to a subgroup of index 2 in $\Z_{2^{\alpha_1}}\oplus\ldots\oplus\Z_{2^{\alpha_{r_1}}}\oplus\Z_2^{r_2}$, and where $A$ and $B$ are disjoint. 
    Moreover, for type 6 maximal distinct sum-free sets, $S=\{0,s\}$ for some order 2 element $s\in B$; so to upper bound $f_{\max}^{\star,6}(G)$, it suffices to sum up the number of maximal independent sets in all distinct link graphs of the form $L_{\{0,s\}}^\star[\Tilde{A}\oplus K]$ where $s \notin \Tilde{A}\oplus K$ has order $2$. 
    Let $\Tilde{A}$ be a coset (not containing the zero element) associated to a subgroup of index 2 in $\Z_{2^{\alpha_1}}\oplus\ldots\oplus\Z_{2^{\alpha_{r_1}}}\oplus\Z_2^{r_2}$, let $s\notin \Tilde{A}\oplus K$ be an order 2 element, and set $\Gamma:=L_{\{0,s\}}^\star[\Tilde{A}\oplus K]$. 
    We saw in the proof of Proposition~\ref{fms even groups2c} that $\mis(\Gamma)=2^{n/4}$. Now let us count the pairs $(\Tilde{A},\{0,s\})$.
    \begin{enumerate}[label=(\roman*)]
        \item $\Tilde{A}$ is a coset associated to a subgroup of index 2 with rank $r-1$. By Lemma \ref{nb subgroups of index 2 with fixed rank}, there are $2^r-2^{r_1}$ such sets $\Tilde{A}$. Since there are $2^{r-1}-1$ order 2 elements of $G$ not belonging to $A=\Tilde{A}\oplus K$, there are $2^{r-1}-1$ choices for $s$. Thus, such pairs $(\Tilde{A},\{0,s\})$ contribute  at most $(2^r-2^{r_1})(2^{r-1}-1)2^{n/4}$ type 6 maximal distinct sum-free sets.
        
        \item $\Tilde{A}$ is a coset associated to a subgroup of index 2 with rank $r$. By 
        Lemma~\ref{nb subgroups of index 2 with fixed rank}, there are $2^{r_1}-1$ such sets $\Tilde{A}$. Since $A=\Tilde{A}\oplus K$ contains no order 2 elements, there are $2^r-1$ choices for $s$. Thus, such pairs $(\Tilde{A},\{0,s\})$ contribute  at most $(2^{r_1}-1)(2^r-1)2^{n/4}$ type 6 maximal distinct sum-free sets.
    \end{enumerate}
Summing over all the possible choices, we obtain 
\begin{align*}
    f_{\max}^{\star,6}(G)&\leq \left[(2^r-2^{r_1})(2^{r-1}-1)+(2^{r_1}-1)(2^r-1)\right]2^{n/4}\\
    &=(2^{2r-1}+2^{r+r_1-1}-2^{r+1}+1)2^{n/4}.
\end{align*}
\end{proof}

Since every maximal distinct sum-free subset of $G$ is of type 0--6, Claims~\ref{upper bound type 0-4}--\ref{upper bound type 6} yield the desired upper bound:
$$\fms(G)\leq (2^{2r-1}+2^{r+r_1-1}-2^{r+1}+2^{r_1})2^{n/4}+(2^r-2^{r_1}+\eta)2^{n/4-2^{r-2}}.$$ \qed

\subsection{Proof of Theorem~\ref{fms even groups2b}}
 Let $C:=3235084117$. Given any $\eta >0$, define additional constants $ \eps, n_0 >0$ so that
$$0< 1/n_0  \ll \eps \ll \eta, 1/C.$$

Let $G$ be an abelian group of even order $n \geq n_0$ as in the statement of the theorem. 
Thus,
$G=\Z_{2^{\alpha_1}}\oplus\ldots\oplus\Z_{2^{\alpha_{r_1}}}\oplus\Z_2^{r_2}\oplus K$ with $r_1$ and $r_2$ non-negative integers, $\alpha_1\geq\ldots\geq\alpha_{r_1}\geq2$,  and 
$|K|$ odd. Furthermore, $n/2^{r_1+r_2}\geq C$.

 For a set $A:=\Tilde{A}\oplus K$ with $\Tilde{A}$ a coset (not containing the zero element) associated to a subgroup of index 2 in $\Z_{2^{\alpha_1}}\oplus\ldots\oplus\Z_{2^{\alpha_{r_1}}}\oplus\Z_2^{r_2}$, and a set $S\subseteq G\setminus A$, we say that the pair $(A,S)$ \textit{generates} a distinct sum-free set $D$ in $G$ if there exists a maximal independent set $I$ in the link graph $L_S^\star[A]$ such that $D=I\cup S$.

\begin{claim}\label{mdsfs generated by B}
    Consider $A=\Tilde{A}\oplus K$ with $\Tilde{A}$ a coset (not containing the zero element) of a subgroup $H$ of index 2 in $\Z_{2^{\alpha_1}}\oplus\ldots\oplus\Z_{2^{\alpha_{r_1}}}\oplus\Z_2^{r_2}$. 
    \begin{enumerate}[label=(\roman*)]
        \item If $H$ has rank $r-1$, then $(A,\{0\})$ generates at least $2^{n/4-2^{r-2}}-(n/2-2^{r-1})\cdot2^{(1/4-\eps)n-2^{r-2}}$ maximal distinct sum-free sets in $G$.
        \item If $H$ has rank $r$, then $(A,\{0\})$ generates at least $2^{n/4}-(n/2-2^r)\cdot2^{(1/4-\eps)n-2^{r-2}}$ maximal distinct sum-free sets in $G$.
        \item If $s\in G\setminus(A\cup\{0\})$ has order 2, $(A,\{0,s\})$ generates at least $2^{n/4}-(n/2-2)\cdot2^{(1/4-\eps)n-2^{r-2}}$ maximal distinct sum-free sets in $G$.
    \end{enumerate}
\end{claim}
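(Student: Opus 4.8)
The plan is to count, for each of the three cases, those maximal independent sets $I$ of $\Gamma:=L_S^\star[A]$ for which $S\cup I$ is \emph{not} a maximal distinct sum-free set, and subtract this from $\mis(\Gamma)$. Recall from the proof of Proposition~\ref{fms even groups2c} and Claim~\ref{upper bound type 5} that $\mis(\Gamma)=2^{n/4-2^{r-2}}$ in case (i), and $\mis(\Gamma)=2^{n/4}$ in cases (ii) and (iii). Since distinct maximal independent sets $I$ of $\Gamma$ yield distinct sets $S\cup I$ (they differ on $A$), the number of maximal distinct sum-free sets generated by $(A,S)$ equals $\mis(\Gamma)$ minus the number of \emph{bad} $I$, that is, those for which $S\cup I$ is distinct sum-free but extendable.

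First I would note that if $S\cup I$ is not maximal then there is some $g\in G$ with $(S\cup I)\cup\{g\}$ distinct sum-free; since $I$ is a maximal independent set of $\Gamma$ no such $g$ can lie in $A$, and trivially $g\notin S$, so $g\in G\setminus(A\cup S)$. The key step is then the following bound: for each fixed such $g$, the number of maximal independent sets $I$ of $\Gamma$ with $(S\cup I)\cup\{g\}$ distinct sum-free is at most $\mis(\Gamma_g)$, where $\Gamma_g:=L_{S\cup\{g\}}^\star[A]$. Indeed, for such an $I$, the set $I$ is independent in $\Gamma_g$ (as $(S\cup\{g\})\cup I$ is distinct sum-free and $A$ is sum-free), and $I$ is moreover \emph{maximal} in $\Gamma_g$: every vertex of $A\setminus I$ has a $\Gamma$-neighbour in $I$ by maximality in $\Gamma$, hence a $\Gamma_g$-neighbour in $I$ since $\Gamma\subseteq\Gamma_g$. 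Thus these $I$ inject into the maximal independent sets of $\Gamma_g$.

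Next I would bound $\mis(\Gamma_g)$ using the work already done for the upper bound. In cases (i) and (ii), for a non-order-$2$ element $g$ we have $S\cup\{g\}=\{0,g\}$ with $g\ne -g$, which is sub-case (vii) of Claim~\ref{type 3} and hence $\mis(\Gamma_g)\le 2^{(1/4-\eps)n-2^{r-2}}$. In case (iii), for \emph{every} $g\in G\setminus(A\cup S)$ we have $S\cup\{g\}=\{0,s,g\}$ with $s$ of order $2$ and $|S\cup\{g\}|=3$, which is sub-case (iv) of Claim~\ref{type 3}, again giving $\mis(\Gamma_g)\le 2^{(1/4-\eps)n-2^{r-2}}$. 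The only remaining situation is an order-$2$ element $g$ in cases (i) and (ii), where $\{0,g\}$ is a type~6 configuration and $\mis(\Gamma_g)=2^{n/4}$ is too large to be useful. Here I would argue that there are in fact \emph{no} bad $I$ at all: since at most $\le 2^{r+1}$ vertices $x\in A$ satisfy $2x\in\{0,g\}$ while $|A|=n/2$ is large, the graph $\Gamma_g$ contains a $K_4$-component $\{x,x+g,g-x,-x\}$ on which $\Gamma$ induces a perfect matching (the two edges $\{x,-x\}$ and $\{x+g,g-x\}$). Any maximal independent set of $\Gamma$ therefore selects one endpoint of each of these two edges, i.e.\ two vertices of this $K_4$, which are adjacent in $\Gamma_g$; hence no maximal independent set of $\Gamma$ is independent in $\Gamma_g$, so order-$2$ elements $g$ contribute nothing.

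Finally I would count the relevant values of $g$. Since $0\notin A$ we have $|G\setminus(A\cup\{0\})|=n/2-1$, and by Fact~\ref{nb order 2 elements} the index-$2$ subgroup $H$ contains all $2^r-1$ order-$2$ elements of the $2$-part exactly when it has rank $r$; combined with the count of $2^{r-1}$ order-$2$ elements inside $A$ from Proposition~\ref{fms even groups2c}, this shows $G\setminus(A\cup\{0\})$ contains $2^{r-1}-1$ order-$2$ elements in case (i) and $2^r-1$ in case (ii). Discarding these (which give no bad $I$) leaves $n/2-2^{r-1}$ and $n/2-2^r$ relevant values of $g$ respectively, while in case (iii) all $n/2-2$ elements of $G\setminus(A\cup S)$ are relevant. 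A union bound over $g$ then bounds the number of bad $I$ by $(n/2-2^{r-1})2^{(1/4-\eps)n-2^{r-2}}$, $(n/2-2^{r})2^{(1/4-\eps)n-2^{r-2}}$ and $(n/2-2)2^{(1/4-\eps)n-2^{r-2}}$ in the three cases, and subtracting from $\mis(\Gamma)$ yields the claim. I expect the main obstacle to be the order-$2$ analysis in cases (i) and (ii): recognising that these $g$ produce an uncontrollably large $\mis(\Gamma_g)$, and then showing directly that they nonetheless yield no bad $I$, which is precisely what forces the candidate counts down to $n/2-2^{r-1}$ and $n/2-2^r$.
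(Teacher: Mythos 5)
Your proposal is correct, and its global structure is the same as the paper's: you count the \emph{bad} maximal independent sets of $\Gamma=L^\star_S[A]$ by a union bound over the possible extension elements $g\in G\setminus(A\cup S)$, observe that each bad $I$ is a maximal independent set of $\Gamma_g=L^\star_{S\cup\{g\}}[A]$, invoke sub-cases (vii) and (iv) from the proof of Claim~\ref{type 3} to bound $\mis(\Gamma_g)$ for the non-order-2 (resp.\ all) extensions, and show that order-2 extensions in cases (i) and (ii) contribute nothing, which is exactly what reduces the number of relevant $g$ to $n/2-2^{r-1}$ and $n/2-2^r$. The one place you genuinely diverge is that last exclusion step. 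The paper argues globally, by comparing cardinalities: every maximal independent set of $\Gamma=L^\star_{\{0\}}[A]$ has size $(|A|+2^{r-1})/2$ (case (i)) or $|A|/2$ (case (ii)), whereas every maximal independent set of $\Gamma_g$ for order-2 $g$ has size $(|A|+2^{r-1}+a(g))/4$ (resp.\ $(|A|+a(g))/4$), so a common one forces $a(g)\geq n/2>2^r$, contradicting Fact~\ref{solutions 2x=g}. You instead argue locally: since at most $2^{r+1}$ vertices $x$ satisfy $2x\in\{0,g\}$, the graph $\Gamma_g$ has a $K_4$-component $\{x,-x,x+g,g-x\}$ whose $\Gamma$-edges $\{x,-x\}$ and $\{x+g,g-x\}$ form a perfect matching of it, so any maximal independent set of $\Gamma$ contains two $\Gamma_g$-adjacent vertices and is therefore not even independent in $\Gamma_g$. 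Both arguments are valid and of comparable length; yours has the mild advantage of not needing the exact sizes of maximal independent sets in the two link graphs (only the existence of one generic $K_4$-component), while the paper's reuses numerical facts it has already recorded in Claim~\ref{upper bound type 5} and Proposition~\ref{fms even groups2c}, and its cardinality mismatch is also what gets recycled later in Claims~\ref{mdsfs generated by both A and A'} and~\ref{mdsfs generated by both A and (A',{0,s})}.
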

\begin{proof}
   (i) We fix such an $A$. In the proof of Claim~\ref{upper bound type 5} we saw that $L^\star_{\{0\}}[A]$ has $2^{n/4-2^{r-2}}$ maximal independent sets. Let $I\subseteq A$ be such a maximal independent set and suppose that $I\cup \{0\}$ is not a maximal distinct sum-free set in $G$; call such an $I$ \textit{bad}. Then there exists $s\in G\setminus (A\cup \{0\})$ such that $\{0,s\}\cup I$ is distinct sum-free. So $I$ is a maximal independent set in $L^\star_{\{0,s\}}[A]$. 
   
   Suppose first that $s$ does not have order 2.
   By case (vii) in the proof of Claim~\ref{type 3}, there are at most $2^{(1/4-\eps)n-2^{r-2}}$ such sets $I$.
   
   Now suppose that $s$ has order 2. We saw in Claim~\ref{upper bound type 5} that $L^\star_{\{0\}}[A]$ consists of a  matching between $|A|-2^{r-1}$ elements, and has $2^{r-1}$ isolated vertices. So $|I|=(|A|-2^{r-1})/2+2^{r-1}=(|A|+2^{r-1})/2$. 
   On the other hand, in the proof of Proposition~\ref{fms even groups2c}, we saw that $L^\star_{\{0,s\}}[A]$ consists of the disjoint union of $(|A|-2^{r-1}-a(s))/4$ copies of $K_4$ and $(2^{r-1}+a(s))/2$ copies of $K_2$, where $a(s)$ is the number of $x\in A$ such that $2x=s$. So $|I|=(|A|+2^{r-1}+a(s))/4$, which is possible only if $a(s)=|A|+2^{r-1}=n/2+2^{r-1}$. 
   Since  $n/2> n/C\geq 2^r$,  such an $I$ cannot exist since $a(s)\leq 2^r$ by Fact~\ref{solutions 2x=g}. 

Thus, all bad $I$s come from  $s$ that do not have order 2. The bound in the claim now follows as there are $n/2-2^{r-1}$ choices for $s \in G \setminus (A\cup\{0\})$ that do not have order 2.

   \smallskip
   
   (ii) 
   In the proof of Claim~\ref{upper bound type 5} 
   we saw that $L^\star_{\{0\}}[A]$ has $2^{n/4}$ maximal  independent sets $I$. In this case, again 
   all bad $I$s come from  $s$ that do not have order 2. Indeed, suppose $s \in G \setminus A$ has order 2. In the proof of
 Claim~\ref{upper bound type 5} we saw  maximal independent sets in $L^\star_{\{0\}}[A]$ have size $|A|/2$. Meanwhile, 
 in the proof of Proposition~\ref{fms even groups2c} we saw that maximal independent sets in $L^\star_{\{0,s\}}[A]$ have size $|A|/4+a(s)/4$. These sizes coincide only if $a(s)=n/2 > n/C \geq 2^r$, which is impossible by Fact~\ref{solutions 2x=g}. The bound in the claim now follows by case (vii) from the proof of Claim~\ref{type 3} and the fact that there are $n/2-2^r$ elements in $G\setminus (A\cup\{0\})$ that do not have order 2.

   \smallskip 
   
   (iii) We saw in the proof of Proposition~\ref{fms even groups2c} that $L^\star_{\{0,s\}}[A]$ has $2^{n/4}$ maximal independent sets $I$. If $I\cup \{0,s\}$ is not a maximal sum-free subset of $G$, then there  exists $s'\in G\setminus(A\cup\{0,s\})$ such that $I$ is a maximal independent set in $L^\star_{\{0,s,s'\}}[A]$. By case (iv) in the proof of Claim \ref{type 3}, there are at most $2^{(1/4-\eps)n-2^{r-2}}$ such sets $I$. The bound in the claim now follows as there are $n/2-2$ choices for $s'\in G\setminus(A\cup\{0,s\})$.
\end{proof}

Now we need to be careful because we may count the same maximal distinct sum-free set in two different pairs $(A,\{0\})$ and $(A',\{0\})$.

\begin{claim}\label{mdsfs generated by both A and A'}
    Consider distinct sets $A=\Tilde{A}\oplus K$ and $A'=\Tilde{A'}\oplus K$, with $\Tilde{A}$ and $\Tilde{A'}$ cosets of  subgroups of index 2 in $\Z_{2^{\alpha_1}}\oplus\ldots\oplus\Z_{2^{\alpha_{r_1}}}\oplus\Z_2^{r_2}$. 
    Then there is at most one maximal distinct sum-free subset of $G$ generated by both $(A,\{0\})$ and $(A',\{0\})$.
\end{claim}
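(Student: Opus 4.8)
The plan is to show that if a maximal distinct sum-free set $D$ is generated by both $(A,\{0\})$ and $(A',\{0\})$ with $A\neq A'$, then $D$ is forced to be a very specific set, so at most one such $D$ can exist. Recall from the proof of Claim~\ref{upper bound type 5} that $D=I\cup\{0\}$ where $I$ is a maximal independent set in $L^\star_{\{0\}}[A]$, which means $I\subseteq A$ consists of one representative from each matched pair $\{x,-x\}$ together with all the isolated vertices (the order $2$ elements, if $\Tilde{A}$ has rank $r-1$). In particular, writing $D^\star:=D\setminus\{0\}=I$, the set $I$ lies inside $A=\Tilde{A}\oplus K$ and inside $A'=\Tilde{A'}\oplus K$ simultaneously.

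First I would record the structural constraint coming from the two cosets. Since $I\subseteq A\cap A'$ and $A,A'$ are distinct cosets of index $2$ subgroups (after tensoring with $K$), the intersection $A\cap A'$ is itself contained in a coset of the subgroup $H\cap H'$, which has index $4$ (or index $2$ if one of $H,H'$ is contained in the other, but distinctness of the cosets rules out equality and one must check the containment cases). The key point is that $A\cap A'$ is a proper subset of each $A$, so $I$ cannot meet every matched pair $\{x,-x\}$ of $L^\star_{\{0\}}[A]$ unless the pairs themselves are constrained. I would exploit maximality: because $I$ is a \emph{maximal} independent set in $L^\star_{\{0\}}[A]$, it must contain exactly one of $x,-x$ for each edge $\{x,-x\}$, and all isolated vertices. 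The same holds with $A$ replaced by $A'$. So $I$ simultaneously picks exactly one representative from each pair $\{x,-x\}$ with $x\in A$ and from each pair $\{x,-x\}$ with $x\in A'$.

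Next I would turn this into a uniqueness statement. For $x\in A\cap A'$ with $2x\neq 0$, both graphs contain the edge $\{x,-x\}$ (note $-x\in A\cap A'$ as well, since $A=-A$ and $A'=-A'$ up to the choice of coset — here one checks that $\Tilde{A}=-\Tilde{A}$ because a nontrivial coset of an index $2$ subgroup is its own negative). The constraint $I\subseteq A\cap A'$ forces that for every matched pair of $L^\star_{\{0\}}[A]$ meeting $A\cap A'$, the chosen representative lies in $A\cap A'$; but a matched pair $\{x,-x\}$ with $x\in A$ has \emph{both} endpoints in $A$, and for $I$ to contain an endpoint in $A\cap A'$ we need that endpoint to also lie in $A'$. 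This rigidity pins down, pair by pair, which representative is chosen, leaving no freedom. I would argue that these combined membership conditions determine $I$ completely, hence $D=I\cup\{0\}$ is unique.

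The main obstacle I anticipate is the bookkeeping around which elements of $A$ actually lie in $A'$ and the role of the isolated (order $2$) vertices when the ranks of $H$ and $H'$ differ. One must handle the possibility that $\Tilde{A}$ and $\Tilde{A'}$ come from subgroups of different ranks, so that one link graph has isolated vertices and the other does not; in that regime the order $2$ elements behave differently in the two graphs, and I would use Fact~\ref{nb order 2 elements} together with the explicit description of the order $2$ elements of $G$ to control them. Concretely, I would split into cases according to whether $H\subseteq H'$, $H'\subseteq H$, or neither, and in the generic (index $4$ intersection) case show that the set of pairs forcing a representative in $A\cap A'$ already determines $I$ on all of $A$ by the maximality selection rule, yielding a single admissible $D$. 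Once the determination is complete, the conclusion that at most one maximal distinct sum-free set is generated by both pairs is immediate.
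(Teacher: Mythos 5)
Your setup is fine: writing $D=\{0\}\cup I$, you correctly observe that $I\subseteq A\cap A'$, that $I$ must be a maximal independent set in both $L^{\star}_{\{0\}}[A]$ and $L^{\star}_{\{0\}}[A']$, and that each of these graphs is a matching $\{x,-x\}$ plus isolated order-$2$ vertices, so $I$ contains all isolated vertices and exactly one endpoint of every edge. The gap is the step carrying the whole proof: ``this rigidity pins down, pair by pair, which representative is chosen, leaving no freedom.'' That is false, and the very symmetry you invoke is what breaks it. Since $\Tilde{A'}=-\Tilde{A'}$, the set $A'$ is symmetric, so for each edge $\{x,-x\}$ of $L^{\star}_{\{0\}}[A]$ either \emph{both} endpoints lie in $A\cap A'$ or \emph{neither} does. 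In the first case nothing is pinned down: either representative is compatible with $I\subseteq A\cap A'$ and with independence in $L^{\star}_{\{0\}}[A']$ (the pair $\{x,-x\}$ is an edge there too). In the second case no representative is admissible at all. So the membership conditions never select a representative, and your claimed unique determination of $I$ does not happen. What your analysis actually yields, pushed to its conclusion, is nonexistence: every edge of $L^{\star}_{\{0\}}[A]$ would have to lie inside $A\cap A'$, but the edges cover at least $|A|-2^{r}=n/2-2^{r}$ vertices (Fact~\ref{solutions 2x=g}), whereas $|A\cap A'|=n/4$ --- note two distinct index-$2$ subgroups always intersect in an index-$4$ subgroup, since nested index-$2$ subgroups coincide and would force $\Tilde{A}=\Tilde{A'}$, so your worry about containment cases is moot. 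As $n/2^{r}\geq C$, this is a contradiction; hence \emph{no} maximal distinct sum-free set is generated by both pairs. That (stronger) conclusion does imply the claim, but not by the argument you describe.

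For comparison, the paper's proof is a pure cardinality comparison and avoids all of this structure. By the proof of Claim~\ref{upper bound type 5}, a maximal independent set in $L^{\star}_{\{0\}}[A]$ has size exactly $\mu(G)/2+2^{r-2}$ when $\Tilde{A}$ has rank $r-1$, and exactly $\mu(G)/2$ when it has rank $r$; since $I\subseteq A\cap A'$ and $|A\cap A'|\leq \mu(G)/2$, any case in which either coset has rank $r-1$ is immediately impossible, and in the remaining case $I=A\cap A'$ is forced, so at most one $D$ can arise. If you want to salvage your write-up, replace the ``rigidity'' paragraph either by this size comparison or by the nonexistence count above; as written, the central step is unjustified and incorrect.
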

\begin{proof}
    Note that $|A\cap A'|=|\Tilde{A}\cap\Tilde{A'}|\leq \mu(G)/2$. Let $D$ be a maximal distinct sum-free subset of $G$ and suppose that it is generated by both  $(A,\{0\})$ and $(A',\{0\})$. Then $D=\{0\}\cup I$ with $I\subseteq A\cap A'$.
    \begin{enumerate}[label=(\roman*)]
        \item If $\Tilde{A}$ and $\Tilde{A}'$ are both cosets associated to subgroups of index 2 with rank $r-1$, then $|I|=\mu(G)/2+2^{r-2}$. Indeed, in this case, in the proof of Claim~\ref{upper bound type 5} we saw that the link graph $L_{\{0\}}^\star[A]$ consists of a  matching containing $\mu(G)-2^{r-1}$ vertices, as well as $2^{r-1}$ isolated vertices. So the size of a maximal independent set is $(\mu(G)-2^{r-1})/2+2^{r-1}=\mu(G)/2+2^{r-2}$. 
        In particular $|A\cap A'|\geq |I| \geq \mu(G)/2+2^{r-2}$, which is impossible.
        
        \item If $\Tilde{A}$ and $\Tilde{A}'$ are both  cosets associated to  subgroups of index 2 with rank $r$, then $|I|=\mu(G)/2$. Indeed we saw in the proof of Claim~\ref{upper bound type 5} that $L_{\{0\}}^\star[A]$ is a perfect matching. This is only possible if $I=A \cap A'$.
So there is at most one possible set $D$ generated by both.
        
        \item If we have one of each, then $|I|=\mu(G)/2=\mu(G)/2+2^{r-2}$, which is impossible.
    \end{enumerate}
\end{proof}

\begin{claim}\label{mdsfs generated by both A and (A',{0,s})}
    Consider two sets $A=\Tilde{A}\oplus K$ and $A'=\Tilde{A'}\oplus K$, with $\Tilde{A}$ and $\Tilde{A'}$ cosets of  subgroups of index 2 in $\Z_{2^{\alpha_1}}\oplus\ldots\oplus\Z_{2^{\alpha_{r_1}}}\oplus\Z_2^{r_2}$, and $s\in G\setminus (A'\cup\{0\})$ with order 2.
    Then there is no maximal distinct sum-free set generated by both $(A,\{0\})$ and $(A',\{0,s\})$.
\end{claim}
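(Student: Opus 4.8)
The plan is to argue by contradiction: I would suppose that some maximal distinct sum-free set $D$ is generated by \emph{both} $(A,\{0\})$ and $(A',\{0,s\})$, and then extract enough structural information about $D$ to violate the hypothesis $n/2^r\ge C$. First I would unpack the definition of ``generates''. It yields a maximal independent set $I$ in $L_{\{0\}}^\star[A]$ with $D=\{0\}\cup I$, and a maximal independent set $I'$ in $L_{\{0,s\}}^\star[A']$ with $D=\{0,s\}\cup I'$. Since $0\notin A$ and (using $s\in G\setminus(A'\cup\{0\})$) neither $0$ nor $s$ lies in $A'$, the zero element and $s$ cannot lie in the respective independent sets, so in fact $I=D\setminus\{0\}$ and $I'=D\setminus\{0,s\}=I\setminus\{s\}$. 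The crucial consequence is that $s\in I\subseteq A$, i.e.\ the order $2$ element $s$ lies in $A$. (If $A=A'$ this already contradicts $s\notin A'$; but the size computation below does not even need $A\ne A'$.)

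Next I would use $s\in A$ to pin down the rank of the index $2$ subgroup defining $A$. Since $s$ has order $2$ and, by Fact~\ref{nb order 2 elements}, a coset associated with an index $2$ subgroup of rank $r$ contains no order $2$ element, the coset $\tilde A$ must be associated with an index $2$ subgroup of rank $r-1$. Hence, by the structure of $L_{\{0\}}^\star[A]$ recorded in the proof of Claim~\ref{upper bound type 5} (a perfect matching on $n/2-2^{r-1}$ vertices together with $2^{r-1}$ isolated vertices), every maximal independent set has size $|I|=n/4+2^{r-2}$. Therefore $|I'|=|I|-1=n/4+2^{r-2}-1$.

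Finally I would play this off against the structure of $L_{\{0,s\}}^\star[A']$. Writing $a(s):=|\{x\in A':2x=s\}|$, the proof of Proposition~\ref{fms even groups2c} shows this graph is a disjoint union of copies of $K_4$ and $K_2$, so every maximal independent set in it has the \emph{same} size, namely $(n/2+2^{r-1}+a(s))/4$ or $(n/2+a(s))/4$ according as $\tilde A'$ has rank $r-1$ or rank $r$. Equating either expression with $n/4+2^{r-2}-1$ forces $a(s)>n/2-4$. But $a(s)\le 2^r$ by Fact~\ref{solutions 2x=g}, whereas $n/2=2^{r-1}(n/2^r)\ge 2^{r-1}C>2^r\ge a(s)$ since $n/2^r\ge C$ and $C$ is large; this is the desired contradiction.

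The main obstacle, I expect, is resisting the temptation to argue purely by disjointness of supports: one might hope that $D\subseteq A\cup\{0\}$ already precludes $s$ from being usable, but $s$ can genuinely lie in $A$, so no support argument closes the gap. The real mechanism is a size discrepancy: forcing $s\in A$ makes $\tilde A$ have rank $r-1$, which fixes $|I|$ \emph{exactly}, and then the rigidity of maximal independent sets in a disjoint union of complete graphs turns the one-element difference $|I'|=|I|-1$ into an impossible demand on $a(s)$, controlled by Fact~\ref{solutions 2x=g}. Keeping careful track of the two possible ranks of $\tilde A'$, and of the exact (not merely approximate) independent-set sizes, is the part that requires the most care.
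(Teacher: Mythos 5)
Your proof is correct and follows essentially the same route as the paper's: both arguments force $s\in I\subseteq A$, use Fact~\ref{nb order 2 elements} to conclude $\Tilde{A}$ is associated to a rank $r-1$ subgroup, pin down the exact maximal-independent-set sizes in $L^\star_{\{0\}}[A]$ and $L^\star_{\{0,s\}}[A']$, and turn the discrepancy $|I|=|I'|+1$ into the impossible bound $a(s)>2^r$ contradicting Fact~\ref{solutions 2x=g}. The only cosmetic difference is that you merge the two rank cases for $\Tilde{A'}$ into the single inequality $a(s)>n/2-4$, where the paper computes $a(s)$ exactly in each case.
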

\begin{proof}
    Let $D$ be a maximal distinct sum-free set and suppose for a contradiction that it is generated by both  $(A,\{0\})$ and $(A',\{0,s\})$. Then $D=\{0\}\cup I=\{0,s\}\cup I'$ with $I\subseteq A$ and $I'\subseteq A'$. In particular, $|I|=|I'|+1$, and $s\in I\subseteq A$; so $A$ contains an order 2 element, and thus it is associated to a subgroup with rank $r-1$. In the proof of Claim~\ref{upper bound type 5} we saw that in this case we have $|I|=\mu(G)/2+2^{r-2}$.
    \begin{enumerate}[label=(\roman*)]
        \item If $\Tilde{A}'$ is a coset associated to a subgroup of index 2 with 
        rank $r-1$, then by the proof of 
        Proposition~\ref{fms even groups2c}, $|I'|=(\mu(G)+2^{r-1}+a(s))/4$. Since $|I|=|I'|+1$, this implies that  $a(s)=n/2+2^{r-1}-4> 2^r$, which is impossible by Fact~\ref{solutions 2x=g}.
        
        \item If $\Tilde{A}'$ is a coset associated to a subgroup of index 2 with 
        rank $r$, then by the proof of Proposition~\ref{fms even groups2c}, $|I'|=(\mu(G)+a(s))/4$. It follows that $a(s)=n/2+2^r-4> 2^r$, which is again impossible.
    \end{enumerate}
\end{proof}

\begin{claim}\label{mdsfs generated by both (A,{0,s}) and (A',{0,s'})}
    Consider two sets $A=\Tilde{A}\oplus K$ and $A'=\Tilde{A'}\oplus K$, with $\Tilde{A}$ and $\Tilde{A'}$ cosets of  subgroups of index 2 in $\Z_{2^{\alpha_1}}\oplus\ldots\oplus\Z_{2^{\alpha_{r_1}}}\oplus\Z_2^{r_2}$, and $s\in G\setminus (A\cup\{0\})$, $s'\in G\setminus (A'\cup\{0\})$ both with order 2.
    Then there are at most $3^{n/12}$ maximal distinct sum-free sets generated by both $(A,\{0,s\})$ and $(A',\{0,s'\})$.
\end{claim}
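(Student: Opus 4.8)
The plan is to show that any maximal distinct sum-free set $D$ generated by both pairs is determined by its trace on $A\cap A'$, that this trace is a maximal independent set of a link graph on $A\cap A'$, and that $|A\cap A'|\leq n/4$; the bound then follows from the Moon--Moser inequality~(\ref{Moon-Moser}), since $3^{(n/4)/3}=3^{n/12}$. Since this estimate will be applied to \emph{distinct} generating pairs (and for identical pairs there is nothing to prove here), we may assume $(A,\{0,s\})\neq(A',\{0,s'\})$, i.e.\ $\Tilde A\neq\Tilde{A'}$ or $s\neq s'$. Write $G_0:=\Z_{2^{\alpha_1}}\oplus\ldots\oplus\Z_{2^{\alpha_{r_1}}}\oplus\Z_2^{r_2}$, so that $A=\Tilde A\oplus K$, $A'=\Tilde{A'}\oplus K$ and $A\cap A'=(\Tilde A\cap\Tilde{A'})\oplus K$. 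Suppose $D$ is generated by both pairs, say $D=I\cup\{0,s\}$ with $I\subseteq A$ and $D=I'\cup\{0,s'\}$ with $I'\subseteq A'$; in particular $\{0,s,s'\}\subseteq D$. As $s,s'$ have order $2$, the set $\{0,s,s'\}$ contains no distinct Schur triple and is therefore distinct sum-free. Expanding $(A\cup\{0,s\})\cap(A'\cup\{0,s'\})$ gives $D\subseteq(A\cap A')\cup\{0,s,s'\}$; and since $0\notin A$, $s\notin A$ and $s'\notin A'$, we have $\{0,s,s'\}\cap(A\cap A')=\emptyset$, so that $D\setminus\{0,s,s'\}\subseteq A\cap A'$.

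Assume first $\Tilde A\neq\Tilde{A'}$. Each of $\Tilde A,\Tilde{A'}$ is the unique coset avoiding $0$ of an index-$2$ subgroup of $G_0$, so $\Tilde A\neq\Tilde{A'}$ forces the two associated subgroups $H,H'$ to be distinct; then $H\cap H'$ has index $4$ in $G_0$, whence $\Tilde A\cap\Tilde{A'}$ is empty or a coset of $H\cap H'$ and $|\Tilde A\cap\Tilde{A'}|\leq|G_0|/4$. Consequently $|A\cap A'|\leq(|G_0|/4)|K|=n/4$. Now set $T:=A\cap A'$ and $S_0:=\{0,s,s'\}$ (so $S_0=\{0,s\}$ when $s=s'$). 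Both $S_0$ and $T$ are distinct sum-free (indeed $T\subseteq A$ is sum-free), and $S_0\cup(D\setminus S_0)=D$ with $D\setminus S_0\subseteq T$ a maximal distinct sum-free set. By Lemma~\ref{extension link graph}, $D\setminus S_0$ is a maximal independent set in $L_{S_0}^\star[T]$. Since $D=(D\setminus S_0)\cup S_0$, the map $D\mapsto D\setminus S_0$ is injective on the generated sets, so their number is at most $\mis(L_{S_0}^\star[T])\leq 3^{|T|/3}\leq 3^{n/12}$ by~(\ref{Moon-Moser}); any loops in $L_{S_0}^\star[T]$ only decrease this count, since a looped vertex lies in no independent set.

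It remains to treat $\Tilde A=\Tilde{A'}$, i.e.\ $A=A'$; as the pairs are distinct this forces $s\neq s'$. Here no $D$ can be generated by both pairs: from $D=I\cup\{0,s\}$ with $I\subseteq A$ we get $s'\in D$, and as $s'\neq 0,s$ this yields $s'\in I\subseteq A=A'$, contradicting $s'\notin A'$. Thus the count is $0\leq 3^{n/12}$ in this case, completing the proof. I expect the only genuine subtleties to be the verification of the containment $D\setminus\{0,s,s'\}\subseteq A\cap A'$ together with the sharp estimate $|A\cap A'|\leq n/4$ — it is precisely this sharp bound that makes Moon--Moser deliver the exact exponent $n/12$ rather than something weaker — while the reduction to a maximal-independent-set count via Lemma~\ref{extension link graph} and the degenerate case $A=A'$ are routine.
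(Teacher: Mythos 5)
Your proof is correct and takes essentially the same approach as the paper's: pass to $D':=D\setminus\{0,s,s'\}$, note via Lemma~\ref{extension link graph} that $D'$ is a maximal independent set in the link graph $L^\star_{\{0,s,s'\}}[A\cap A']$, and apply the Moon--Moser bound~(\ref{Moon-Moser}) with $|A\cap A'|\leq n/4$. You simply make explicit several points the paper leaves implicit -- the index-$4$ justification of $|A\cap A'|\leq n/4$, the containment $D'\subseteq A\cap A'$, the harmlessness of loops, and the degenerate case $A=A'$, $s\neq s'$ (where in fact no set is generated by both pairs) -- which is a sound, slightly more careful rendering of the same argument.
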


\begin{proof}
    Let $D$ be a maximal distinct sum-free set generated by both $(A,\{0,s\})$ and $(A',\{0,s'\})$. 
    Then $D=\{0,s\}\cup I=\{0,s'\}\cup I'$ with $I\subseteq A$ and $I'\subseteq A'$.
    Let $D':=D\setminus\{0,s,s'\}$. Then by Lemma~\ref{extension link graph}, $D'$ is a maximal independent set in $L_{\{0,s,s'\}}[A\cap A']$. Since $|A\cap A'|\leq n/4$, by (\ref{Moon-Moser}) there are at most $3^{n/12}$ choices for $D'$, and thus for $D$, as desired.
\end{proof}

 Lemma~\ref{nb subgroups of index 2 with fixed rank} implies that there are $2^r-2^{r_1}$ choices for $(A,\{0\})$ in Claim~\ref{mdsfs generated by B}(i) and 
 $2^{r_1}-1$ choices for $(A,\{0\})$ in Claim~\ref{mdsfs generated by B}(ii).
The argument in the proof of Claim~\ref{upper bound type 6} implies that there are 
$2^{2r-1}+2^{r+r_1-1}-2^{r+1}+1$ choices for $(A,\{0,s\})$ in Claim~\ref{mdsfs generated by B}(iii). 
By Lemma~\ref{nb subgroups of index 2 with fixed rank} there are $2^r-1$ subgroups of index $2$ of 
$\Z_{2^{\alpha_1}}\oplus\ldots\oplus\Z_{2^{\alpha_{r_1}}}\oplus\Z_2^{r_2}$. Thus, there are at most
 $\binom{2^r-1}{2}$ choices for $(A,A')$
 in each of Claims~\ref{mdsfs generated by both A and A'}--\ref{mdsfs generated by both (A,{0,s}) and (A',{0,s'})}.
 Further, there
  at most $(2^r)^4$ choices for $(A,\{0,s\})$ and $(A',\{0,s'\})$ in Claim~\ref{mdsfs generated by both (A,{0,s}) and (A',{0,s'})}. Therefore,
  Claims~\ref{mdsfs generated by B}--\ref{mdsfs generated by both (A,{0,s}) and (A',{0,s'})}  imply that 
\begin{align*}
    \fms(G)&\geq (2^{r_1}-1)\left(2^{n/4}-(n/2-2^r)\cdot2^{(1/4-\eps)n-2^{r-2}}\right) \\
    & + (2^r-2^{r_1})\left(2^{n/4-2^{r-2}}-(n/2-2^{r-1})\cdot2^{(1/4-\eps)n-2^{r-2}}\right)\\
    & +(2^{2r-1}+2^{r+r_1-1}-2^{r+1}+1)\left(2^{n/4}-(n/2-2)\cdot2^{(1/4-\eps)n-2^{r-2}}\right)\\
    &- \binom{2^r-1}{2} - (2^r)^4\cdot 3^{n/12}.
\end{align*}
As $n$ is sufficiently large, we have that
$$\fms(G)\geq (2^{2r-1}+2^{r+r_1-1}-2^{r+1}+2^{r_1})2^{n/4}+(2^r-2^{r_1}-\eta)2^{n/4-2^{r-2}}.$$
\qed

\begin{remark}
    Note that we did not try to optimise the value of $C$ in the proof of Theorem~\ref{fms even groups2b}. With more careful analysis of the link graphs, it may be possible to reduce the value of $C$ to a single-digit number. Theorem~\ref{fms even groups2} should also hold with a much smaller choice of $C$, although with our argument the choice of $C$ is essentially tight. 
\end{remark}

\section{A conjecture on maximal independent sets in graphs with perfect matchings}\label{sec:conj}

We conclude this paper by raising a conjecture on the number of maximal independent sets in graphs that contain a perfect matching. 

\begin{conjecture}\label{conj3}
Let $\Gamma$ be an $n$-vertex graph that contains a perfect matching. Then $\Gamma$ contains at most $2^{n/2}$ maximal independent sets.
\end{conjecture}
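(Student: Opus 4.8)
The plan is to induct on $n$ (which is necessarily even), after first reducing to connected $\Gamma$. Every edge of a perfect matching $M$ has both endpoints in a single connected component, so each component $C$ inherits the perfect matching $M\cap E(C)$; since both $\mis(\cdot)$ and the target bound $2^{n/2}$ are multiplicative over components, it suffices to prove the bound when $\Gamma$ is connected.

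For the inductive step I would fix a matching edge $a_1b_1\in M$ and split the maximal independent sets of $\Gamma$ according to their intersection with $\{a_1,b_1\}$ (which has size at most $1$ by independence). Those containing $a_1$ are in bijection with the maximal independent sets of $\Gamma\setminus N[a_1]$, and likewise for $b_1$. Those containing neither $a_1$ nor $b_1$ inject into the maximal independent sets of $\Gamma':=\Gamma\setminus\{a_1,b_1\}$, since such an $I$ remains maximal in $\Gamma'$. Crucially, $M\setminus\{a_1b_1\}$ is a perfect matching of the $(n-2)$-vertex graph $\Gamma'$, so the induction hypothesis bounds the third class by $2^{n/2-1}$. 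Hence
\[
\mis(\Gamma)\le \mis(\Gamma\setminus N[a_1])+\mis(\Gamma\setminus N[b_1])+2^{n/2-1},
\]
and the whole proof reduces to establishing $\mis(\Gamma\setminus N[a_1])+\mis(\Gamma\setminus N[b_1])\le 2^{n/2-1}$.

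In an easy regime this already closes: if $a_1$ and $b_1$ both have degree $1$ then $a_1b_1$ is an isolated edge, the ``neither'' class is empty, and $\Gamma\setminus N[a_1]=\Gamma\setminus N[b_1]=\Gamma'$, giving $\mis(\Gamma)=2\,\mis(\Gamma')\le 2^{n/2}$. In a connected graph on more than two vertices every matching edge instead has an endpoint of degree at least $2$, which shrinks the deleted graphs. One should then try to choose the edge $a_1b_1$ (or average the recursion over all matching edges) so that the two neighbourhood-deletion terms are provably small, and dispose of the triangle-free case directly via the Hujter--Tuza bound~(\ref{Hujter-Tuza}), so that the remaining content concerns graphs that do contain triangles.

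The main obstacle is precisely the control of $\mis(\Gamma\setminus N[a_1])$ and $\mis(\Gamma\setminus N[b_1])$: deleting a closed neighbourhood destroys the perfect matching, because the matching partners of $N(a_1)\setminus\{b_1\}$ become uncovered, so the induction hypothesis no longer applies to these terms; meanwhile the only general fallback, the Moon--Moser bound~(\ref{Moon-Moser}), is useless here since $3^{n/3}$ already exceeds $2^{n/2}$. To overcome this I would strengthen the statement to a bound for graphs carrying a \emph{near}-perfect matching that tracks the number $u$ of uncovered vertices (which grows by at most $d(a_1,\Gamma)-1$ under deletion of $N[a_1]$) and is stable under neighbourhood deletion. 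The naive strengthening $\mis(\Gamma)\le 2^{(n-u)/2}3^{u/3}$ is false -- a single triangle ($n=3$, $u=1$) already has $3>2\cdot 3^{1/3}$ maximal independent sets -- so the correct invariant must reflect the Gallai--Edmonds structure of the matching rather than the raw deficiency, penalising factor-critical (triangle-like) pieces correctly. Pinning down such an invariant that both dominates the extremal behaviour and is stable under deleting $N[a_1]$ is the crux, and is exactly why the statement is posed here only as a conjecture.
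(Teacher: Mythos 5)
First, a point of calibration: the paper does not prove this statement at all --- it is posed as Conjecture~\ref{conj3} and left open, with only the two extremal examples (a perfect matching, and disjoint copies of the graph $H$ made of two triangles joined by an edge) showing the bound cannot be lowered. Your submission, by its own admission, is not a proof either: it is an inductive framework whose decisive step is missing. The parts you do prove are fine --- the reduction to connected graphs, the three-way split of maximal independent sets over a matching edge $a_1b_1$, the bijection between the sets containing $a_1$ and the maximal independent sets of $\Gamma\setminus N[a_1]$, and the injection of the ``neither'' class into the maximal independent sets of $\Gamma\setminus\{a_1,b_1\}$, which indeed inherits a perfect matching. But the reduction you arrive at, namely that it suffices to prove $\mis(\Gamma\setminus N[a_1])+\mis(\Gamma\setminus N[b_1])\le 2^{n/2-1}$, is where everything stops, and this target inequality is itself problematic: it depends on the choice of matching edge and can fail. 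In the paper's own extremal graph $H$ (so $n=6$), choosing $a_1b_1$ to be the matching edge inside one of the triangles makes both $\Gamma\setminus N[a_1]$ and $\Gamma\setminus N[b_1]$ equal to a triangle, so the left-hand side is $6>4=2^{n/2-1}$. Thus at best one must show that \emph{some} matching edge (or an average over matching edges) satisfies the inequality; and since both extremal examples attain it with equality for the good edge choices, there is no slack anywhere in the recursion. Such a statement is not visibly easier than Conjecture~\ref{conj3} itself, and you give no argument for it.

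Your diagnosis of why the induction cannot be closed is accurate: deleting a closed neighbourhood destroys the perfect matching, the Moon--Moser bound (\ref{Moon-Moser}) is far too weak to serve as a fallback, and your candidate strengthening $\mis(\Gamma)\le 2^{(n-u)/2}3^{u/3}$ for graphs with $u$ uncovered vertices is, as you note, already false for a single triangle. But identifying the crux is not the same as resolving it: no invariant stable under neighbourhood deletion is produced, and no case of the conjecture beyond the trivial ones (triangle-free graphs via (\ref{Hujter-Tuza}), graphs whose matching edges are isolated) is actually established. In sum, what you have written is a reasonable plan together with an honest account of the obstruction; the conjecture remains exactly as open after your argument as before it.
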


Note that we came to this conjecture whilst studying auxiliary (link) graphs for Conjecture~\ref{conj2}. A resolution of Conjecture~\ref{conj3} may therefore be helpful for attacking Conjecture~\ref{conj2}.
Two examples show that the bound in Conjecture~\ref{conj3} cannot be lowered. Indeed, if $\Gamma$ itself is a perfect matching then $\Gamma$ contains precisely $2^{n/2}$ maximal independent sets. Similarly, suppose $\Gamma$ is the disjoint union of $n/6$ copies of the following graph $H$: $H$ consists of two disjoint triangles joined together by a single edge. Then $H$ contains $8$ maximal independent sets, and thus $\Gamma$ itself contains precisely $8^{n/6}=2^{n/2}$ maximal independent sets.

{\noindent \bf Additional note.} Since we submitted this paper, Balogh, Garcia, Liu and Yang~\cite{bgly} have proven Conjecture~\ref{conj3} in a  strengthened form; see~\cite[Theorem 1.9]{bgly}. In particular, the resolution 
of  Conjecture~\ref{conj3} is a crucial ingredient to their resolution (in the negative) of Question~\ref{quesX}, as well as their proof of Conjecture~\ref{conj2}
in the case of even order groups $G \not \cong \Z ^k_2 \oplus \Z_3$.

\section*{Acknowledgments}
The authors are grateful to Vsevolod Lev and Wojciech Samotij for discussions relating to Conjecture~\ref{conj5}. This research was partially carried out whilst the second author was visiting the Universit\'e Bourgogne Europe. This author is grateful for the hospitality he received. The first author thanks Rafik Souanef for pointing out Lemma \ref{nb subgroups of index 2 with fixed rank}.
The authors are also grateful to the referee for their careful review.

\smallskip

{\noindent \bf Data availability statement.}
There are no additional data beyond that contained within the main manuscript.

\end{document}